\definecolor{labelkey}{rgb}{0,0.08,0.45}
\definecolor{refkey}{rgb}{0,0.6,0.0}
\definecolor{Brown}{rgb}{0.45,0.0,0.05}
\definecolor{dgreen}{rgb}{0.00,0.49,0.00}
\definecolor{dblue}{rgb}{0,0.08,0.75}
\newcommand{\email}[1]{\href{mailto:#1}{\nolinkurl{#1}}}
\newcommand{\minimize}[2]{\ensuremath{\underset{\substack{{#1}}}%
{\text{\rm minimize}}\;\;#2 }}
\newcommand{\argmin}[2]{\ensuremath{\underset{\substack{{#1}}}%
{\text{\rm argmin}}\;\;#2 }}
\newcommand{\Frac}[2]{\displaystyle{\frac{#1}{#2}}} 
\newcommand{\Scal}[2]{{\bigg\langle{{#1}\:\bigg |~{#2}}\bigg\rangle}}
\newcommand{\scal}[2]{{\left\langle{{#1}\mid{#2}}\right\rangle}}
\newcommand{\menge}[2]{\big\{{#1}~\big |~{#2}\big\}} 
\newcommand{\HH}{\ensuremath{{\mathcal H}}}
\newcommand{\BB}{\ensuremath{{\mathcal B}}}
\newcommand{\HHH}{\ensuremath{\boldsymbol{\mathcal H}}}
\newcommand{\GGG}{\ensuremath{\boldsymbol{\mathcal G}}}
\newcommand{\KKK}{\ensuremath{\boldsymbol{\mathcal K}}}
\newcommand{\GG}{\ensuremath{{\mathcal G}}}
\newcommand{\emp}{\ensuremath{{\varnothing}}}
\newcommand{\Id}{\ensuremath{\operatorname{Id}}\,}
\newcommand{\RR}{\ensuremath{\mathbb{R}}}
\newcommand{\bdry}{\ensuremath{\text{bdry}\,}}
\newcommand{\RPP}{\ensuremath{\left]0,+\infty\right[}}
\newcommand{\RX}{\ensuremath{\left]-\infty,+\infty\right]}}
\newcommand{\NN}{\ensuremath{\mathbb N}}
\newcommand{\exi}{\ensuremath{\exists\,}}\,
\newcommand{\pinf}{\ensuremath{{+\infty}}}
\newcommand{\weakly}{\ensuremath{\rightharpoonup}}
\newcommand{\weakc}[1]{\ensuremath{\,{\xrightharpoonup{#1}\,}}}
\newcommand{\stro}[1]{\ensuremath{\,{\xrightarrow{#1}\,}}}
\newcommand{\ran}{\ensuremath{\text{range}}}
\newcommand{\prox}{\ensuremath{\text{\rm prox}}}
\newcommand{\inte}{\ensuremath{\text{int}\,}}
\newcommand{\zeroun}{\ensuremath{\left]0,1\right[}}   
\renewcommand{\leq}{\ensuremath{\leqslant}}
\renewcommand{\geq}{\ensuremath{\geqslant}}
\newtheorem{theorem}{Theorem}[section]
\newtheorem{proposition}[theorem]{Proposition}
\theoremstyle{plain}{\theorembodyfont{\rmfamily}%
}
\theoremstyle{plain}{\theorembodyfont{\rmfamily}%
}
\newtheorem{primal-dual approach}[theorem]{Primal-dual approach}
\theoremstyle{plain}{\theorembodyfont{\rmfamily}%
}
\theoremstyle{plain}{\theorembodyfont{\rmfamily}%
\newtheorem{remark}[theorem]{Remark}}
\theoremstyle{plain}{\theorembodyfont{\rmfamily}%
}
\theoremstyle{plain}{\theorembodyfont{\rmfamily}%
\newtheorem{problem}[theorem]{Problem}}
\numberwithin{equation}{section}
\begin{document}
\title{\sffamily A Strongly Convergent Primal-Dual Method for\\
Nonoverlapping Domain Decomposition\footnote{Contact author: 
P. L. Combettes, {\ttfamily plc@ljll.math.upmc.fr},
phone: +33 1 4427 6319, fax: +33 1 4427 7200.
The work of H. Attouch was supported by ECOS under grant C13E03,  and
by Air Force Office of Scientific Research, USAF, under grant  FA9550-14-1-0056. 
The work of L. M. Brice\~{n}o-Arias and P. L. Combettes was supported
by MathAmSud under grant N13MATH01.
L. M. Brice\~{n}o-Arias was also supported by Conicyt under grants
Fondecyt 3120054 and Anillo ACT1106.}}
\author{H\'edy Attouch,$^1$ Luis M. Brice\~{n}o-Arias,$^2$ and 
Patrick L. Combettes$^3$
\\[2mm]
\small
$\!^1$Universit\'e Montpellier II\\
\small Institut de Math\'ematiques et de Mod\'elisation de 
Montpellier -- UMR 5149\\
\small F-34095, Montpellier, France 
(\email{hedy.attouch@univ-montp2.fr})\\[1mm]
\small $\!^2$Universidad T\'ecnica Federico Santa Mar\'ia\\
\small Departamento de Matem\'atica\\
\small Santiago, Chile (\email{luis.briceno@usm.cl})\\[1mm]
\small $\!^3$Sorbonne Universit\'es -- UPMC Univ. Paris 06\\
\small UMR 7598, Laboratoire Jacques-Louis Lions\\
\small F-75005, Paris, France (\email{plc@ljll.math.upmc.fr})
}
\date{~}
\tolerance 2500
\maketitle
\begin{abstract}
We propose a primal-dual parallel proximal 
splitting method for solving domain decomposition problems 
for partial differential equations. The problem is formulated via 
minimization of energy functions on the subdomains with
coupling constraints which model various
properties of the solution at the interfaces.
The proposed method can handle a wide range of linear and nonlinear
problems, with flexible, possibly nonlinear, transmission 
conditions across the interfaces. Strong convergence in the energy
spaces is established in this general setting, and without any
additional assumption on the energy functions or the geometry of
the problem. Several examples are presented.
\end{abstract}

{\bfseries Keywords:}
domain decomposition for PDE's, 
obstacle problem,
$p$-Laplacian, 
parallel splitting algorithm, 
primal-dual algorithm, 
proximal algorithm, 
Poisson problem, 
structured convex minimization methods, 
transmission condition. 

\newpage

\section{Introduction}

One of the main objectives of domain decomposition is to 
solve partial differential equations and the associated boundary 
value problems on complex geometries by partitioning the 
original domain in smaller and simpler subdomains 
\cite{DDMS13,DDMS09,ChMa,LeTal,PL-Lions,Quar99,Tose05}. 
The objective of the 
present paper is to propose an original algorithm for solving
variational formulations associated with partial 
differential equations posed on partitioned domains. 
Our analysis pertains to 
non-overlapping domain decompositions, in which subdomains intersect
only on their interfaces. The original domain $\Omega$ is
partitioned into $m$ subdomains $(\Omega_i)_{i\in I}$, the
interface between two subdomains $\Omega_i$
and $\Omega_j$ is denoted by $\Upsilon_{\!ij}$, and 
$\Upsilon_{\!ii}$ stands for the part of the boundary of 
$\Omega_i$ shared with the boundary of $\Omega$ 
(see Fig.~\ref{fig:example}, where $I=\{1,\ldots,m\}$). 

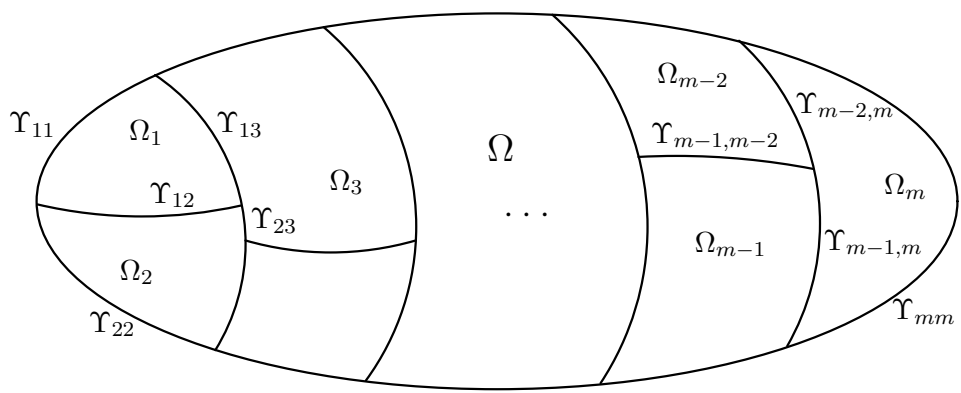
\begin{figure}[!ht]
\begin{center}
\setlength{\unitlength}{0.0176mm}
%
%\begingroup\makeatletter\ifx\SetFigFont\undefined%
%\gdef\SetFigFont#1#2#3#4#5{%
%\reset@font\fontsize{#1}{#2pt}%
%\fontfamily{#3}\fontseries{#4}\fontshape{#5}%
%\selectfont}%
%\fi\endgroup%
{\renewcommand{\dashlinestretch}{30}
\begin{picture}(6900,3127)(0,-10)
\thicklines
\put(793.063,4700.694){\arc{6785.184}{1.3506}{1.8028}}
\put(36.492,1102.909){\arc{3069.366}{5.3061}{6.8330}}
\put(2207.000,3323.250){\arc{4569.684}{1.2883}{1.8533}}
\put(868.090,1233.208){\arc{3957.029}{5.4206}{6.9104}}
\put(2423.509,1231.296){\arc{4305.230}{5.4461}{6.8624}}
\put(3996.133,1258.798){\arc{3738.473}{5.4549}{6.8013}}
\put(4794.000,-3600.250){\arc{10733.752}{4.6591}{4.9034}}
\put(3450,1425){\ellipse{6884}{2834}}
\put(630,800){\makebox(0,0)[lb]{$\Omega_2$}}
\put(2200,1480){\makebox(0,0)[lb]{$\Omega_3$}}
\put(4650,2270){\makebox(0,0)[lb]{$\Omega_{m-2}$}}
\put(6347,1444){\makebox(0,0)[lb]{$\Omega_m$}}
\put(700,1850){\makebox(0,0)[lb]{$\Omega_{1}$}}
\put(400,410){\makebox(0,0)[lb]{$\Upsilon_{22}$}}
\put(6400,500){\makebox(0,0)[lb]{$\Upsilon_{mm}$}}
\put(3505,1309){\makebox(0,0)[lb]{. . .}}
\put(5680,2029){\makebox(0,0)[lb]{$\Upsilon_{m-2,m}$}}
\put(4600,1800){\makebox(0,0)[lb]{$\Upsilon_{m-1,m-2}$}}
\put(4930,994){\makebox(0,0)[lb]{$\Omega_{m-1}$}}
\put(5890,994){\makebox(0,0)[lb]{$\Upsilon_{m-1,m}$}}
\put(-200,1910){\makebox(0,0)[lb]{$\Upsilon_{11}$}}
\put(850,1380){\makebox(0,0)[lb]{$\Upsilon_{12}$}}
\put(1350,1910){\makebox(0,0)[lb]{$\Upsilon_{13}$}}
\put(1607,1189){\makebox(0,0)[lb]{$\Upsilon_{23}$}}
%\put(2110,870){\makebox(0,0)[lb]{\smash{{\SetFigFont{10}{12.0}%
%{\rmdefault}{\mddefault}{\updefault}$\Upsilon_{34}$}}}}
%\put(1100,634){\makebox(0,0)[lb]{\smash{{\SetFigFont{10}{12.0}%
%{\rmdefault}{\mddefault}{\updefault}$\Upsilon_{24}$}}}}
\put(3375,1729){\makebox(0,0)[lb]{\Large$\Omega$}}
\end{picture}
}
\caption{Decomposition of the domain $\Omega$.}
\label{fig:example}
\end{center}
\end{figure}
A sizable literature has been devoted to variational domain
decomposition; see for instance
\cite{AttBolRedSou,Atto11,Bade06,DDMS13,DDMS09,Chan90,Forn09,%
Glow89,Quar99,Tose05}. The novelty of our framework is
to allow for the use of several subdomains with
general convex energy functions on each of them, together with a
broad range of transmission conditions on interfaces. More
specifically, in our model the $i$th variable $u_i$ lies in a
suitable Sobolev space $\HH_i$ and the structured minimization
problem under consideration assumes the form
\begin{equation}
\label{e:primal}
\minimize{(u_i)_{i\in I}\in\bigoplus_{i\in I}\HH_i}{
\sum_{i\in I}\varphi_i(u_i)+
\sum_{(i,j)\in K}\psi_{ij} 
({\mathsf T}_{\!ij}\,u_i-{\mathsf T}_{\!ji}\,u_j)},
\end{equation}
where $K$ is the set indices of active interfaces, 
$\mathsf{T}_{\!ij}\colon\HH_i\to L^2(\Upsilon_{ij})$ 
denotes the trace operator relative to the interface
$\Upsilon_{ij}$, and $\varphi_i\colon\HH_i\to\RX$ and 
$\psi_{ij}\colon L^2(\Upsilon_{\!ij})\to\RX$ are lower 
semicontinuous convex functions. In applications, one is
often interested in solving the Fenchel-Rockafellar dual 
problem associated with \eqref{e:primal}, the solutions of 
which model tensions (e.g., stresses or fluxes) at the interfaces. 
There are two main components 
in \eqref{e:primal}. The first component is the separable function
$(u_i)_{i\in I}\mapsto\sum_{i\in I}\varphi_i(u_i)$ which
incorporates the internal energy functions 
$(\varphi_i)_{i\in I}$ on each
subdomain. The other component is a coupling term which models
transmission conditions across the interfaces. 
Since the separable term needs not be smooth and may take on 
the value $\pinf$, hard constraints on $(u_i)_{i\in I}$ can be
imposed in our formulation. 
It can also deal with non quadratic functions, 
capturing, for instance, $p$-Laplacian or obstacle problems. 
On the other hand, the coupling function models transmission 
conditions, in particular continuity, through the interfaces. 
A major 
advantage of this approach is its flexibility,
which makes it possible to treat in a unified fashion unilateral 
and/or nonlinear transmission conditions. 

To solve \eqref{e:primal} and its dual, we bring into play a 
multivariate primal-dual proximal splitting method recently 
proposed in \cite{Jnca14} for structured convex minimization 
problems. The algorithm generates both primal and dual sequences 
which converge strongly to the unique solution satisfying the 
Kuhn-Tucker conditions, and lying closest to some initial point.
At each iteration an outer approximation to the Kuhn-Tucker set 
is constructed as the intersection of two half-spaces, and the 
update is obtained by projecting the initial point onto this 
intersection. 
This method will be adapted to solve the 
variational problem \eqref{e:primal} in a fully 
split fashion, in that each elementary step of the algorithm 
involves the constituents of the problem (namely $u_i$, $\varphi_i$,
$\psi_{ij}$, and $\mathsf{T}_{ij}$) separately. In addition, its
structure lends it to implementations on parallel architectures.
Let us note that typically, Lagrangian-based approaches 
\cite{Atto09,Glow89} do not achieve full splitting with respect 
to the linear operators, which complicates the numerical 
implementation and may require additional restrictions on these 
linear operators to ensure convergence. Another salient advantage 
of the proposed algorithm that distinguishes it from 
Lagrangian-based approaches as well as from splitting algorithms 
which could be considered for solving \eqref{e:primal}, such as 
those of \cite{Bot13c,Siop11,Siop13,Svva12,Cond13,Bang13}, is that 
these methods provide only weak convergence. In addition,
the methods of \cite{Bot13c,Siop11,Siop13,Svva12,Cond13,Bang13}
require the computation of bounds on the range of certain
parameters. In the case of \eqref{e:primal}, these bounds involve 
norms of combinations of trace operators, which are very hard to 
estimate. Altogether, the proposed algorithm provides significant
advantages over the state of the art.

The paper is organized as follows. In Section~\ref{sec:2},
we present the notation and the abstract primal-dual splitting 
algorithm which is the basis of our method. 
In Section~\ref{sec:3}, we formally state the domain decomposition 
problem under investigation, define the functional setting,
and introduce the main algorithm. Section~\ref{sec:4} is devoted 
to applications to concrete domain decomposition problems. 
Finally, in Section~\ref{sec:5}, we briefly discuss some adaptations 
of our setting to other interesting problems.

\section{Notation and preliminaries}
\label{sec:2}

Let $\BB$ be a real Banach space.
Weak and strong convergence in $\BB$ are denoted by 
$\weakc{\BB}$ and $\stro{\BB}$, respectively, and 
$\Gamma_0(\BB)$ is the class of lower semicontinuous 
convex functions $\varphi\colon\BB\to\RX$ which are not 
identically equal to $\pinf$. A function $\varphi\colon\BB\to\RX$
is coercive if $\lim_{\|u\|\to\pinf}\varphi(u)=\pinf$. The Hilbert
direct sum of a finite family of Hilbert spaces $(\HH_i)_{i\in I}$
is denoted by $\bigoplus_{i\in I}\HH_i$.

$\RR^N$ denotes the usual $N$-dimensional Euclidean space and 
$|\cdot|$ its norm. Let $\Omega$ be a nonempty open 
bounded subset of $\RR^N$ with Lipschitz boundary $\bdry\Omega$.
We denote by $x$ a generic element of $\Omega$, and by $dx$ the
restriction to $\Omega$ of the Lebesgue measure on $\RR^N$.
All the functional spaces considered throughout the paper involve
real-valued functions. For every $p\in\left]1,\pinf\right[$, 
$W^{1,p}(\Omega)=\menge{v\in L^p(\Omega)}{Dv\in(L^p(\Omega))^N}$, 
where $D$ denotes the weak gradient (derivatives in the sense of
distributions). In particular, we set 
$H^1(\Omega)=W^{1,2}(\Omega)$, which is a Hilbert space 
with scalar product $\scal{\cdot}{\cdot}_{H^1(\Omega)}\colon(u,v)
\mapsto\int_{\Omega}uv+\int_{\Omega}(Du)^\top Dv$. 
We denote by $S$ the
surface measure on $\bdry\Omega$ \cite[Section~1.1.3]{Neca67}. 
Now let $\Upsilon$ be a nonempty open subset of
$\bdry\Omega$ %hence $\Upsilon$ relatively open in $\bdry\Omega$
and let $L^2(\Upsilon)$ be the space of square $S$-integrable 
functions on $\Upsilon$. Endowed with the scalar product 
$(v,w)\mapsto\int_{\Upsilon}vw\,dS$, 
$L^2(\Upsilon)$ is a Hilbert space. The Sobolev trace operator 
${\mathsf T}\colon H^1(\Omega)\to L^2(\bdry\Omega)$ is the
 unique bounded linear operator such that 
$(\forall v\in{\EuScript C}^1(\overline{\Omega}))$ 
${\mathsf T}v=v|_{\bdry\Omega}$. 
%Aze (1997) p.49
Endowed with the scalar product 
\begin{equation}
\label{e:scaledp}
\scal{\cdot}{\cdot}\colon(u,v)\mapsto\int_{\Omega}(D u)^\top D v,
\end{equation}
the space $H^1_{0,\Upsilon}(\Omega)=\menge{u\in H^1(\Omega)}
{{\mathsf T}\,u=0\:\;\text{on}\:\Upsilon}$
is a Hilbert space \cite[Section~25.10]{Zeid90B}. For every 
$\alpha\in\left]0,1\right]$, 
${\EuScript C}^{1,\alpha}(\overline{\Omega})$ is the
subspace of ${\EuScript C}^1(\overline{\Omega})$ consisting of 
those functions $u$ such that 
\begin{equation}
(\exi\mu\in\RPP)(\forall (x,y)\in\Omega^2)
\quad|u(x)-u(y)|\leq\mu|x-y|^{\alpha}
\quad\text{and}\quad |Du(x)-Du(y)|\leq\mu|x-y|^{\alpha}.
\end{equation}
Finally,
for $S$-almost every $\omega\in\bdry\Omega$, there exists a unit 
outward normal vector $\nu(\omega)$. %\cite[p.~37]{Gris85}.
For details and complements, see 
\cite{Adam03,Atto06,Drab07,Gris85,Neca67,Zeid90A,Zeid90B}.

Let $\HH$ be a real Hilbert space with scalar product
$\scal{\cdot}{\cdot}$ and associated norm $\|\cdot\|$,
and let $\varphi\in\Gamma_0(\HH)$.
The subdifferential of $\varphi$ is 
\begin{equation}
\label{e:subdiff}
\partial\varphi\colon\HH\to 2^{\HH}\colon 
u\mapsto\menge{u^*\in\HH}{(\forall v\in\HH)\:\:
\varphi(u)+\scal{v-u}{u^*}\leq\varphi(v)},
\end{equation}
the conjugate of $\varphi$ is the function 
$\varphi^*\in\Gamma_0(\HH)$ defined by
\begin{equation}
\varphi^*\colon
u^*\mapsto\sup_{u\in\HH}\big(\scal{u}{u^*}-\varphi(u)\big),
\end{equation}
and the proximity operator of $\varphi\in\Gamma_0(\HH)$ is 
\cite{Mor62b}
\begin{equation}
\label{e:prox}
\prox_{\varphi}\colon\HH\to\HH\colon u\mapsto\argmin{v\in\HH}
{\bigg(\varphi(v)+\frac12\|u-v\|^2\bigg)}.
\end{equation}
Let $C$ be a nonempty closed convex subset of $\HH$. The indicator 
function of $C$ is
\begin{equation}
\iota_C\colon\HH\to\RX\colon u\mapsto
\begin{cases}
0,&\text{if}\;\;u\in C;\\
\pinf,&\text{otherwise,}
\end{cases}
\end{equation}
and the projection (or best approximation) operator onto $C$ is
\begin{equation}
P_C=\prox_{\iota_C}\colon\HH\to C\colon u\mapsto
\argmin{v\in C}{\|u-v\|}.
\end{equation}
For background on convex analysis in Hilbert spaces the reader is 
referred to \cite{Livre1}.

\newpage
The backbone of our model will be the following abstract 
primal-dual saddle problem.

\begin{problem}
\label{prob:5}
Let $I$ and $K$ be nonempty finite index sets, and let
$(\HH_i)_{i\in I}$ and $(\GG_k)_{k\in K}$ be real 
Hilbert spaces. For every $i\in I$ and $k\in K$, let 
$\Phi_i\in\Gamma_0(\HH_i)$, let $\Psi_k\in\Gamma_0(\GG_k)$, let 
$\Lambda_{ki}\colon\HH_i\to\GG_k$ be a bounded linear operator, and
let $\Lambda^*_{ki}\colon\GG_k\to\HH_i$ be its adjoint. 
It is assumed that
\begin{equation}
\label{e:2012-10-21a}
(\forall i\in I)\quad
0\in\ran\bigg(\partial\Phi_i+\sum_{k\in K}\Lambda_{ki}^*\circ
(\partial\Psi_k)\circ\sum_{j\in I}\Lambda_{kj}\bigg).
\end{equation}
Let $\boldsymbol{u}_0=(u_{i,0})_{i\in I}\in\HHH=
\bigoplus_{i\in I}\HH_i$ and let $\boldsymbol{w}_0=
(w_{k,0})_{k\in K}\in\GGG=\bigoplus_{k\in K}\GG_k$.
The problem is to find the best approximation in $\HHH\oplus\GGG$
to $(\boldsymbol{u}_0,\boldsymbol{w}_0)$ from the Kuhn-Tucker set
\begin{multline}
\label{e:2013-12-07k}
\boldsymbol{Z}=\Bigg\{\boldsymbol{u}=(u_i)_{i\in I}\in\HHH
,\;\boldsymbol{w}=(w_k)_{k\in K}\in\GGG\;\bigg |\;
(\forall i\in I)\;\;-\sum_{k\in K}\Lambda_{ki}^*w_k\in
\partial\Phi_i(u_i)\;\;\\
\text{and} \;\;  (\forall k\in K)\;\;\sum_{i\in I}\Lambda_{ki}u_i\in
\partial\Psi_k^*(w_k)\Bigg\}.
\end{multline}
\end{problem}

\begin{proposition}
\label{p:2014-07-02}
Problem~\ref{prob:5} has a unique solution
$(\overline{\boldsymbol{u}},\overline{\boldsymbol{w}})$.
Moreover, $\overline{\boldsymbol{u}}=(\overline{u}_i)_{i\in I}$ 
solves the primal problem
\begin{equation}
\label{e:2012-10-23p}
\minimize{(u_i)_{i\in I}\in\:
\bigoplus_{i\in I}\HH_i}{\sum_{i\in I}
\Phi_i(u_i)+\sum_{k\in K}\Psi_k\bigg(\sum_{i\in I}
\Lambda_{ki}u_{i}\bigg)},
\end{equation}
and $\overline{\boldsymbol{w}}=(\overline{w}_{k})_{k\in K}$ solves
the dual problem
\begin{equation}
\label{e:2012-10-23d}
\minimize{(w_k)_{k\in K}\in\:\bigoplus_{k\in K}\GG_k}{\sum_{i\in I}
\Phi_i^*\bigg(-\sum_{k\in K}
\Lambda_{ki}^*w_k\bigg)+\sum_{k\in K}\Psi^*_k(w_k)}.
\end{equation}
\end{proposition}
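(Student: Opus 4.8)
The plan is to recognize the Kuhn--Tucker set $\boldsymbol{Z}$ of \eqref{e:2013-12-07k} as the set of points at which the Fenchel--Young gap associated with the primal-dual pair \eqref{e:2012-10-23p}--\eqref{e:2012-10-23d} vanishes; this will exhibit $\boldsymbol{Z}$ as a nonempty closed convex subset of the Hilbert space $\HHH\oplus\GGG$, so that the unique solvability of Problem~\ref{prob:5} follows at once from the projection theorem. The identification of this solution with the minimizers of \eqref{e:2012-10-23p} and \eqref{e:2012-10-23d} will then be a routine subdifferential computation.

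Concretely, I would introduce the function
\begin{equation*}
\Xi\colon\HHH\oplus\GGG\to\RX\colon(\boldsymbol{u},\boldsymbol{w})\mapsto\sum_{i\in I}\Big(\Phi_i(u_i)+\Phi_i^*\Big(-\sum_{k\in K}\Lambda_{ki}^*w_k\Big)\Big)+\sum_{k\in K}\Big(\Psi_k\Big(\sum_{i\in I}\Lambda_{ki}u_i\Big)+\Psi_k^*(w_k)\Big),
\end{equation*}
which, being a finite sum of functions each in $\Gamma_0$ of the relevant space precomposed with one of the bounded linear operators $\boldsymbol{w}\mapsto-\sum_{k\in K}\Lambda_{ki}^*w_k$ and $\boldsymbol{u}\mapsto\sum_{i\in I}\Lambda_{ki}u_i$, is convex and lower semicontinuous. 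By the Fenchel--Young inequality \cite{Livre1}, $\Phi_i(u_i)+\Phi_i^*(-\sum_k\Lambda_{ki}^*w_k)\geq\scal{u_i}{-\sum_k\Lambda_{ki}^*w_k}$ with equality if and only if $-\sum_k\Lambda_{ki}^*w_k\in\partial\Phi_i(u_i)$, and $\Psi_k(\sum_i\Lambda_{ki}u_i)+\Psi_k^*(w_k)\geq\scal{\sum_i\Lambda_{ki}u_i}{w_k}$ with equality if and only if $\sum_i\Lambda_{ki}u_i\in\partial\Psi_k^*(w_k)$, equivalently $w_k\in\partial\Psi_k(\sum_i\Lambda_{ki}u_i)$. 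Summing these inequalities over $i\in I$ and $k\in K$ and using $\scal{u_i}{\Lambda_{ki}^*w_k}=\scal{\Lambda_{ki}u_i}{w_k}$, the bilinear terms cancel, so $\Xi\geq0$ everywhere and $\Xi(\boldsymbol{u},\boldsymbol{w})=0$ if and only if equality holds in each of the above Fenchel--Young inequalities, i.e., if and only if $(\boldsymbol{u},\boldsymbol{w})\in\boldsymbol{Z}$. Hence $\boldsymbol{Z}=\lev{0}\Xi$ is closed and convex. Moreover, hypothesis \eqref{e:2012-10-21a} provides $\boldsymbol{u}\in\HHH$ and $\boldsymbol{w}\in\GGG$ such that $w_k\in\partial\Psi_k(\sum_i\Lambda_{ki}u_i)$ for every $k\in K$ and $-\sum_k\Lambda_{ki}^*w_k\in\partial\Phi_i(u_i)$ for every $i\in I$; in view of the conjugation identity just recalled, this pair lies in $\boldsymbol{Z}$, so $\boldsymbol{Z}\neq\emp$. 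Since $\HHH\oplus\GGG$ is a Hilbert space, the projection theorem \cite{Livre1} therefore yields a unique best approximation $(\overline{\boldsymbol{u}},\overline{\boldsymbol{w}})$ to $(\boldsymbol{u}_0,\boldsymbol{w}_0)$ from $\boldsymbol{Z}$, which is precisely the unique solution of Problem~\ref{prob:5}.

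To conclude, I would use that $(\overline{\boldsymbol{u}},\overline{\boldsymbol{w}})\in\boldsymbol{Z}$ together with the subdifferential inequality in \eqref{e:subdiff}. For every $\boldsymbol{u}=(u_i)_{i\in I}\in\HHH$ we get $\Phi_i(u_i)\geq\Phi_i(\overline{u}_i)+\scal{u_i-\overline{u}_i}{-\sum_k\Lambda_{ki}^*\overline{w}_k}$ for each $i\in I$ and, using $\overline{w}_k\in\partial\Psi_k(\sum_i\Lambda_{ki}\overline{u}_i)$, $\Psi_k(\sum_i\Lambda_{ki}u_i)\geq\Psi_k(\sum_i\Lambda_{ki}\overline{u}_i)+\scal{\sum_i\Lambda_{ki}(u_i-\overline{u}_i)}{\overline{w}_k}$ for each $k\in K$; summing over $i$ and $k$, the coupling terms cancel exactly as before and the objective of \eqref{e:2012-10-23p} at $\boldsymbol{u}$ is bounded below by its value at $\overline{\boldsymbol{u}}$, so $\overline{\boldsymbol{u}}$ solves the primal problem. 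Writing the membership $(\overline{\boldsymbol{u}},\overline{\boldsymbol{w}})\in\boldsymbol{Z}$ equivalently as $\overline{u}_i\in\partial\Phi_i^*(-\sum_k\Lambda_{ki}^*\overline{w}_k)$ for every $i\in I$ and $\sum_i\Lambda_{ki}\overline{u}_i\in\partial\Psi_k^*(\overline{w}_k)$ for every $k\in K$, the same computation applied to the objective of \eqref{e:2012-10-23d} shows that $\overline{\boldsymbol{w}}$ solves the dual problem.

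The one genuinely delicate point is the convexity of $\boldsymbol{Z}$: graphs of subdifferentials are not convex in general, and what makes it work here is the dual pairing between the inclusions featuring $\partial\Phi_i$ and those featuring $\partial\Psi_k^*$, which causes the bilinear cross terms to cancel and thereby realizes $\boldsymbol{Z}$ as a sublevel set of the convex function $\Xi$. Everything else is standard. Alternatively, the statement can simply be quoted from the general analysis of the abstract primal-dual splitting method in \cite{Jnca14}.
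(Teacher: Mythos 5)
Your argument is correct, but it takes a genuinely different (namely, self-contained) route: the paper disposes of this proposition in two sentences by citation, quoting closedness and convexity of $\boldsymbol{Z}$ from \cite[Proposition~2.8]{Siop11} and nonemptiness together with the primal--dual optimality assertions from \cite[Corollary~4.5(i)]{Jnca14}. What you write out --- realizing $\boldsymbol{Z}$ as the zero sublevel set of the aggregate Fenchel--Young gap $\Xi$, whose bilinear cross terms cancel so that $\Xi\geq 0$ with equality exactly on $\boldsymbol{Z}$, then invoking the projection theorem, and finally deriving primal and dual optimality of $(\overline{\boldsymbol{u}},\overline{\boldsymbol{w}})$ from the subdifferential inequalities with the same cancellation --- is essentially the content of the two cited results, so your version buys the reader a proof that does not require chasing references, at the cost of length; the paper's version buys brevity and consistency with the framework of \cite{Jnca14} on which Theorem~\ref{t:5} also rests. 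Two small points deserve a word in your write-up. First, passing from \eqref{e:2012-10-21a} to $\boldsymbol{Z}\neq\emp$ tacitly interprets the hypothesis on the product space, i.e., as asserting the existence of a single $\boldsymbol{u}$ and, more importantly, a single family $(w_k)_{k\in K}$ with $w_k\in\partial\Psi_k\big(\sum_{j\in I}\Lambda_{kj}u_j\big)$ serving all $i\in I$ simultaneously; as literally displayed, \eqref{e:2012-10-21a} is a family of range conditions indexed by $i$, each of which only yields its own selection. This product-space reading is the intended one (it is the one used in the paper's proof of Proposition~\ref{p:0} and in \cite{Jnca14}), but it should be made explicit. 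Second, when concluding that $\overline{\boldsymbol{u}}$ minimizes \eqref{e:2012-10-23p} and $\overline{\boldsymbol{w}}$ minimizes \eqref{e:2012-10-23d}, note that the objectives are finite at these points (the relevant subdifferentials are nonempty there), so the lower bounds you derive are not vacuous.
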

\begin{proof}
Since $\boldsymbol{Z}$ in \eqref{e:2013-12-07k} is nonempty, 
closed, and convex \cite[Proposition~2.8]{Siop11}, the projection 
$(\overline{\boldsymbol{u}},\overline{\boldsymbol{w}})$ 
of $(\boldsymbol{u}_0,\boldsymbol{w}_0)$ onto $\boldsymbol{Z}$ 
is uniquely defined. The remaining claims follow from 
\cite[Corollary~4.5(i)]{Jnca14}.
\end{proof}

\medskip

To solve Problem~\ref{prob:5}, we shall use the following splitting
algorithm from \cite{Jnca14}. This algorithm generates a
sequence $(\boldsymbol{u}_{n},\boldsymbol{w}_{n})_{n\in\NN}$ that 
converges strongly to the unique solution to Problem~\ref{prob:5}.
It exploits a convergence principle that goes back in its simplest
form to the work of Haugazeau \cite{Haug68} (see \cite{Sico00} for 
historical comments). Let us note that existing methods for solving
\eqref{e:2012-10-23p}--\eqref{e:2012-10-23d} 
\cite{Bot13c,Siop11,Siop13,Svva12,Cond13,Bang13} guarantee only weak 
convergence to an unspecified primal-dual solution and, in 
addition, require
the knowledge of bounds on certain compositions of the linear
operators involved in the model. In our setting, such bounds would
be extremely hard to obtain. Moreover, the proposed method
solves Problem~\ref{prob:5} in a fully split fashion in that each 
elementary step of the algorithm activates the functions and 
operators of the problem separately. 

In geometrical terms, the algorithm is executed as follows 
\cite{Jnca14}. Set 
$\boldsymbol{x}_0=(\boldsymbol{u}_0,\boldsymbol{w}_0)$ and,
given two points $\boldsymbol{a}$ and 
$\boldsymbol{b}$ in $\KKK=\HHH\oplus\GGG$, denote by 
$H(\boldsymbol{a},\boldsymbol{b})$ the closed affine half-space of 
$\KKK$ onto which $\boldsymbol{b}$ is the projection of 
$\boldsymbol{a}$. At iteration $n$, the current iterate is
$\boldsymbol{x}_n=\big((u_{i,n})_{i\in I},(w_{k,n})_{k\in K}\big)
\in\KKK$ and we find $\boldsymbol{x}_{n+1/2}=
\big((u_{i,n+1/2})_{i\in I},(w_{k,n+1/2})_{k\in K}\big)\in\KKK$ 
such that 
$\boldsymbol{Z}\subset H(\boldsymbol{x}_n,\boldsymbol{x}_{n+1/2})$.
The update $\boldsymbol{x}_{n+1}=
\big((u_{i,n+1})_{i\in I},(w_{k,n+1})_{k\in K}\big)$ is then
obtained as the projection of $\boldsymbol{x}_0$
onto $H(\boldsymbol{x}_0,\boldsymbol{x}_{n})\cap
H(\boldsymbol{x}_n,\boldsymbol{x}_{n+1/2})$, which can be computed
explicitly in terms of 
$(\boldsymbol{x}_0,\boldsymbol{x}_{n},\boldsymbol{x}_{n+1/2})$.
The computation of $\boldsymbol{x}_{n+1/2}$ involves proximal steps
with respect to the functions $(\Phi_i)_{i\in I}$ and 
$(\Psi_k)_{k\in K}$, as well as applications of the linear operators
$(\Lambda_{ki})_{i\in I, k\in K}$ and their adjoints.

\newpage
\begin{theorem}
\label{t:5}
{\rm\cite[Corollary~4.5(ii)--(iii)]{Jnca14}}
Let $\varepsilon\in\zeroun$, let $(\gamma_n)_{n\in\NN}$ and 
$(\mu_n)_{n\in\NN}$ be sequences in $[\varepsilon,1/\varepsilon]$, 
let $(\lambda_n)_{n\in\NN}$ be a sequence in $[\varepsilon,1]$, 
and iterate
\begin{equation}
\label{e:2014-06-05a}
\begin{array}{l}
\text{for}\;n=0,1,\ldots\\
\left\lfloor
\begin{array}{l}
\text{for every}\;i\in I\\
\left\lfloor
\begin{array}{l}
v_{i,n}=u_{i,n}-\gamma_n\sum_{k\in K}\Lambda_{ki}^*w_{k,n}\\
p_{i,n}=\prox_{\gamma_n\Phi_i}v_{i,n}\\
\end{array}
\right.\\
\text{for every}\;k\in K\\
\left\lfloor
\begin{array}{l}
l_{k,n}=\sum_{i\in I}\Lambda_{ki}u_{i,n}\\
q_{k,n}=\prox_{\mu_n\Psi_k}\big(l_{k,n}+\mu_nw_{k,n}\big)\\
t_{k,n}=q_{k,n}-\sum_{i\in I}\Lambda_{ki}p_{i,n}\\
\end{array}
\right.\\
\text{for every}\;i\in I\\
\left\lfloor
\begin{array}{l}
s_{i,n}=\gamma_n^{-1}(u_{i,n}-p_{i,n})+
\mu_n^{-1}\sum_{k\in K}\Lambda_{ki}^*(l_{k,n}-q_{k,n})\\
\end{array}
\right.\\
\tau_n=\sum_{i\in I}\|s_{i,n}\|^2+\sum_{k\in K}\|t_{k,n}\|^2\\
\text{if}\;\tau_n=0\\
\left\lfloor
\begin{array}{l}
\theta_n=0\\
\end{array}
\right.\\
\text{if}\;\tau_n>0\\
\left\lfloor
\begin{array}{l}
\theta_n=\lambda_n\big(\gamma_n^{-1}\sum_{i\in I}
\|u_{i,n}-p_{i,n}\|^2+\mu_n^{-1}
\sum_{k\in K}\|l_{k,n}-q_{k,n}\|^2\big)/\tau_n\\
\end{array}
\right.\\
\text{for every}\;i\in I\\
\left\lfloor
\begin{array}{l}
u_{i,n+1/2}=u_{i,n}-\theta_n s_{i,n}\\
\end{array}
\right.\\
\text{for every}\;k\in K\\
\left\lfloor
\begin{array}{l}
w_{k,n+1/2}=w_{k,n}-\theta_n t_{k,n}
\end{array}
\right.\\
\chi_n=\sum_{i\in I}\scal{u_{i,0}-u_{i,n}}{u_{i,n}-u_{i,n+1/2}}
+\sum_{k\in K}\scal{w_{k,0}-w_{k,n}}{w_{k,n}-w_{k,n+1/2}}\\
\mu_n=\sum_{i\in I}\|u_{i,0}-u_{i,n}\|^2+\sum_{k\in K}
\|w_{k,0}-w_{k,n}\|^2\\
\nu_n=\sum_{i\in I}\|u_{i,n}-u_{i,n+1/2}\|^2+
\sum_{k\in K}\|w_{k,n}-w_{k,n+1/2}\|^2\\
\rho_n=\mu_n\nu_n-\chi_n^2\\
\text{if}\;\rho_n=0\;\text{and}\;\chi_n\geq 0\\
\left\lfloor
\begin{array}{l}
\text{for every}\;i\in I\\
\left\lfloor
\begin{array}{l}
u_{i,n+1}=u_{i,n+1/2}\\
\end{array}
\right.\\
\text{for every}\;k\in K\\
\left\lfloor
\begin{array}{l}
w_{k,n+1}=w_{k,n+1/2}\\
\end{array}
\right.\\
\end{array}
\right.\\
\text{if}\;\rho_n>0\;\text{and}\;\chi_n\nu_n\geq\rho_n\\
\left\lfloor
\begin{array}{l}
\text{for every}\;i\in I\\
\left\lfloor
\begin{array}{l}
u_{i,n+1}=u_{i,0}+(1+\chi_n/\nu_n)(u_{i,n+1/2}-u_{i,n})\\
\end{array}
\right.\\
\text{for every}\;k\in K\\
\left\lfloor
\begin{array}{l}
w_{k,n+1}=w_{k,0}+(1+\chi_n/\nu_n)(w_{k,n+1/2}-w_{k,n})
\end{array}
\right.\\
\end{array}
\right.\\
\text{if}\;\rho_n>0\;\text{and}\;\chi_n\nu_n<\rho_n\\
\left\lfloor
\begin{array}{l}
\text{for every}\;i\in I\\
\left\lfloor
\begin{array}{l}
u_{i,n+1}=u_{i,n}+(\nu_n/\rho_n)\big(\chi_n(u_{i,0}-u_{i,n})
+\mu_n(u_{i,n+1/2}-u_{i,n})\big)\\
\end{array}
\right.\\
\text{for every}\;k\in K\\
\left\lfloor
\begin{array}{l}
w_{k,n+1}=w_{k,n}+(\nu_n/\rho_n)
\big(\chi_n(w_{k,0}-w_{k,n})
+\mu_n(w_{k,n+1/2}-w_{k,n})\big).
\end{array}
\right.\\
\end{array}
\right.\\
\end{array}
\right.\\
\end{array}
\end{equation}
Then, for every $i\in I$ and every $k\in K$, \eqref{e:2014-06-05a} 
generates infinite sequences $(u_{i,n})_{n\in\NN}$ and 
$(w_{k,n})_{n\in\NN}$ such that $u_{i,n}\stro{\HH_i}\overline{u}_i$
and $w_{k,n}\stro{\GG_k}\overline{w}_k$.
\end{theorem}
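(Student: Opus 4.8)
The plan is to view \eqref{e:2014-06-05a} as a realization of the Haugazeau-type outer-approximation method of \cite{Haug68,Sico00}, aimed at the Kuhn--Tucker set $\boldsymbol{Z}$ of \eqref{e:2013-12-07k}, and to deduce strong convergence to $\boldsymbol{x}_\star:=P_{\boldsymbol{Z}}(\boldsymbol{u}_0,\boldsymbol{w}_0)$ by verifying the two structural facts underlying such methods: at every step the constructed half-space contains $\boldsymbol{Z}$, and every weak sequential cluster point of the iterates lies in $\boldsymbol{Z}$. First I would work in $\KKK=\HHH\oplus\GGG$ and introduce the operator
\[
\boldsymbol{A}\colon(\boldsymbol{u},\boldsymbol{w})\mapsto\big((\partial\Phi_i(u_i))_{i\in I},(\partial\Psi_k^*(w_k))_{k\in K}\big),
\]
which is maximally monotone, being the subdifferential of the separable function $(\boldsymbol{u},\boldsymbol{w})\mapsto\sum_{i\in I}\Phi_i(u_i)+\sum_{k\in K}\Psi_k^*(w_k)\in\Gamma_0(\KKK)$, together with the bounded skew-adjoint operator $\boldsymbol{B}\colon(\boldsymbol{u},\boldsymbol{w})\mapsto\big((\sum_{k\in K}\Lambda_{ki}^*w_k)_{i\in I},(-\sum_{i\in I}\Lambda_{ki}u_i)_{k\in K}\big)$, so that $\boldsymbol{A}+\boldsymbol{B}$ is maximally monotone and $\boldsymbol{Z}=\operatorname{zer}(\boldsymbol{A}+\boldsymbol{B})$; by \eqref{e:2012-10-21a} and \cite[Proposition~2.8]{Siop11}, $\boldsymbol{Z}$ is nonempty, closed, and convex, so $\boldsymbol{x}_\star$ is well defined, and by Proposition~\ref{p:2014-07-02} it is $(\overline{\boldsymbol{u}},\overline{\boldsymbol{w}})$. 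I would also note that the block of \eqref{e:2014-06-05a} governed by $\chi_n$, $\nu_n$, $\rho_n$ is precisely Haugazeau's closed-form formula for the projection of $\boldsymbol{x}_0=(\boldsymbol{u}_0,\boldsymbol{w}_0)$ onto $H(\boldsymbol{x}_0,\boldsymbol{x}_n)\cap H(\boldsymbol{x}_n,\boldsymbol{x}_{n+1/2})$, so that $\boldsymbol{x}_{n+1}=P_{H(\boldsymbol{x}_0,\boldsymbol{x}_n)\cap H(\boldsymbol{x}_n,\boldsymbol{x}_{n+1/2})}\boldsymbol{x}_0$.

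The core is the separation step. The definitions of $p_{i,n}$ and $q_{k,n}$ as proximal points give $\gamma_n^{-1}(u_{i,n}-p_{i,n})-\sum_{k\in K}\Lambda_{ki}^*w_{k,n}\in\partial\Phi_i(p_{i,n})$ and $q_{k,n}\in\partial\Psi_k^*(\widetilde w_{k,n})$, where $\widetilde w_{k,n}:=w_{k,n}+\mu_n^{-1}(l_{k,n}-q_{k,n})$; a short verification then shows that the residual $(\boldsymbol{s}_n,\boldsymbol{t}_n):=\big((s_{i,n})_{i\in I},(t_{k,n})_{k\in K}\big)$ satisfies $(\boldsymbol{s}_n,\boldsymbol{t}_n)\in(\boldsymbol{A}+\boldsymbol{B})\big((p_{i,n})_{i\in I},(\widetilde w_{k,n})_{k\in K}\big)$. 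Hence, for any $(\boldsymbol{u},\boldsymbol{w})\in\boldsymbol{Z}=\operatorname{zer}(\boldsymbol{A}+\boldsymbol{B})$, monotonicity of $\boldsymbol{A}+\boldsymbol{B}$ gives $\sum_{i\in I}\scal{p_{i,n}-u_i}{s_{i,n}}+\sum_{k\in K}\scal{\widetilde w_{k,n}-w_k}{t_{k,n}}\geq0$; substituting $\widetilde w_{k,n}-w_{k,n}=\mu_n^{-1}(l_{k,n}-q_{k,n})$ together with the defining formulas for $s_{i,n}$ and $t_{k,n}$ makes the cross terms built from $\Lambda_{ki}$ cancel, leaving the key inequality
\[
\sum_{i\in I}\scal{u_{i,n}-u_i}{s_{i,n}}+\sum_{k\in K}\scal{w_{k,n}-w_k}{t_{k,n}}\;\geq\;\delta_n,
\]
where $\delta_n:=\gamma_n^{-1}\sum_{i\in I}\|u_{i,n}-p_{i,n}\|^2+\mu_n^{-1}\sum_{k\in K}\|l_{k,n}-q_{k,n}\|^2\geq0$. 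When $\tau_n>0$ this reads $\scal{\boldsymbol{x}_n-(\boldsymbol{u},\boldsymbol{w})}{(\boldsymbol{s}_n,\boldsymbol{t}_n)}\geq\delta_n\geq\lambda_n\delta_n=\theta_n\tau_n$, i.e.\ $(\boldsymbol{u},\boldsymbol{w})\in H(\boldsymbol{x}_n,\boldsymbol{x}_{n+1/2})$; when $\tau_n=0$ one has $\boldsymbol{x}_{n+1/2}=\boldsymbol{x}_n$ so the inclusion is trivial, and in all cases $\boldsymbol{Z}\subset H(\boldsymbol{x}_n,\boldsymbol{x}_{n+1/2})$. An induction starting from $H(\boldsymbol{x}_0,\boldsymbol{x}_0)=\KKK$ and using that the projection of $\boldsymbol{x}_0$ onto a closed convex set containing $\boldsymbol{Z}$ again yields a half-space $H(\boldsymbol{x}_0,\cdot)\supset\boldsymbol{Z}$ then gives $\boldsymbol{Z}\subset H(\boldsymbol{x}_0,\boldsymbol{x}_n)\cap H(\boldsymbol{x}_n,\boldsymbol{x}_{n+1/2})$ for every $n$. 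If $\tau_n=0$ for some $n$, the key inequality forces $\delta_n=0$, hence $u_{i,n}=p_{i,n}$ and $l_{k,n}=q_{k,n}$, so $\boldsymbol{x}_n\in\boldsymbol{Z}$; combined with $\boldsymbol{Z}\subset H(\boldsymbol{x}_0,\boldsymbol{x}_n)$ this identifies $\boldsymbol{x}_n$ with $\boldsymbol{x}_\star$ and makes the iterates constant thereafter, so the conclusion holds; we may therefore assume $\tau_n>0$ for all $n$.

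Next I would invoke the convergence principle of the scheme (cf.\ \cite{Haug68,Sico00} and \cite[Section~4]{Jnca14}): since $\boldsymbol{Z}\neq\emp$ is contained in every $H(\boldsymbol{x}_0,\boldsymbol{x}_n)\cap H(\boldsymbol{x}_n,\boldsymbol{x}_{n+1/2})$, the iterates $(\boldsymbol{x}_n)_{n\in\NN}$ are well defined and bounded, $(\|\boldsymbol{x}_n-\boldsymbol{x}_0\|)_{n\in\NN}$ is nondecreasing, $\|\boldsymbol{x}_{n+1}-\boldsymbol{x}_n\|\to0$, and $\|\boldsymbol{x}_n-\boldsymbol{x}_{n+1/2}\|\to0$; moreover $\boldsymbol{x}_n\to\boldsymbol{x}_\star$ strongly as soon as every weak sequential cluster point of $(\boldsymbol{x}_n)_{n\in\NN}$ belongs to $\boldsymbol{Z}$. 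To establish the latter, observe that because $\gamma_n,\mu_n\in[\varepsilon,1/\varepsilon]$ and $\lambda_n\geq\varepsilon$, the triangle inequality bounds $\tau_n$ by a constant multiple of $\delta_n$, so $\|\boldsymbol{x}_n-\boldsymbol{x}_{n+1/2}\|^2=\lambda_n^2\delta_n^2/\tau_n$ is bounded below by a constant multiple of $\delta_n$; hence $\delta_n\to0$, whence $\|u_{i,n}-p_{i,n}\|\to0$, $\|l_{k,n}-q_{k,n}\|\to0$, and therefore $s_{i,n}\to0$, $t_{k,n}\to0$, $p_{i,n}-u_{i,n}\to0$, and $\widetilde w_{k,n}-w_{k,n}\to0$. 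If a subsequence satisfies $\boldsymbol{x}_{k_j}\weakly(\boldsymbol{u},\boldsymbol{w})$, then $\big((p_{i,k_j})_{i\in I},(\widetilde w_{k,k_j})_{k\in K}\big)\weakly(\boldsymbol{u},\boldsymbol{w})$ while $(\boldsymbol{s}_{k_j},\boldsymbol{t}_{k_j})\to0$ strongly, so the sequential weak--strong closedness of $\gr(\boldsymbol{A}+\boldsymbol{B})$ yields $0\in(\boldsymbol{A}+\boldsymbol{B})(\boldsymbol{u},\boldsymbol{w})$, i.e.\ $(\boldsymbol{u},\boldsymbol{w})\in\boldsymbol{Z}$. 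Consequently $\boldsymbol{x}_n\to\boldsymbol{x}_\star=(\overline{\boldsymbol{u}},\overline{\boldsymbol{w}})$, which upon reading off coordinates is exactly $u_{i,n}\stro{\HH_i}\overline{u}_i$ and $w_{k,n}\stro{\GG_k}\overline{w}_k$ for all $i\in I$ and $k\in K$.

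The step I expect to be the main obstacle is the identity underlying the separation inequality: one must combine the monotonicity of $\boldsymbol{A}+\boldsymbol{B}$ at the ``shadow'' point $\big((p_{i,n})_{i\in I},(\widetilde w_{k,n})_{k\in K}\big)$ with the explicit relation between that point and $\boldsymbol{x}_n$, weigh the residuals by exactly $\gamma_n^{-1}$ and $\mu_n^{-1}$, and check that the skew coupling cancels so that precisely the merit quantity $\delta_n$ survives on the right-hand side --- this is what dictates the otherwise opaque formulas for $s_{i,n}$, $t_{k,n}$, and $\theta_n$, and it is also where one must be careful that the demiclosedness argument is applied to that shadow point and not to $\boldsymbol{x}_n$ itself, since $\boldsymbol{B}$ is only weakly continuous and $(\boldsymbol{s}_n,\boldsymbol{t}_n)$ is a value of $\boldsymbol{A}+\boldsymbol{B}$ there. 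The remaining ingredients --- the closed-form double projection and the Haugazeau convergence principle --- are standard and may be quoted.
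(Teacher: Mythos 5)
The paper offers no proof of Theorem~\ref{t:5} beyond the citation \cite[Corollary~4.5(ii)--(iii)]{Jnca14}, and the geometric mechanism it sketches just before the statement --- two half-spaces $H(\boldsymbol{x}_0,\boldsymbol{x}_n)$ and $H(\boldsymbol{x}_n,\boldsymbol{x}_{n+1/2})$ containing the Kuhn--Tucker set, followed by projection of $\boldsymbol{x}_0$ onto their intersection --- is exactly the strategy you follow, so your argument is essentially the paper's (i.e., the cited reference's) approach. Your reconstruction is correct: the identification of $(\boldsymbol{s}_n,\boldsymbol{t}_n)$ as a value of the maximally monotone operator $\boldsymbol{A}+\boldsymbol{B}$ at the shadow point $\big((p_{i,n})_{i\in I},(\widetilde w_{k,n})_{k\in K}\big)$, the cancellation of the skew terms leaving the lower bound $\delta_n$, the estimate $\tau_n=O(\delta_n)$ forcing $\delta_n\to 0$, and the weak--strong closedness argument applied at the shadow point rather than at $\boldsymbol{x}_n$ all check out.
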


\section{Problem formulation and algorithm}
\label{sec:3}

The problem under consideration is the following.

\begin{problem}
\label{prob:1}
Let $\Omega$ be a nonempty open bounded subset of $\RR^N$ with 
Lipschitz boundary $\bdry\Omega$, let $m\geq 2$ be an integer, and 
set $I=\{1,\ldots,m\}$. Suppose that the following hold:
\begin{enumerate}
\item
\label{A2}
$(\Omega_i)_{i\in I}$ are disjoint 
open subsets of $\Omega$ (see Fig.~\ref{fig:example}) with
Lipschitz boundaries $(\bdry\Omega_i)_{i\in I}$, 
$\overline{\Omega}=\bigcup_{i\in I}\overline{\Omega_i}$, and
\begin{equation}
\label{e:15octobre2008}
(\forall i\in I)\quad\Upsilon_{ii}=
\inte_{\bdry\Omega}(\bdry\Omega_i\cap\bdry\Omega)\neq\emp,
\end{equation}
where $\inte_{\bdry\Omega}$ denotes the interior relative to
${\bdry\Omega}$.
\item
\label{A3}
For every $i\in I$, 
\begin{equation}
J(i)=\menge{j\in I\smallsetminus\{i\}}{\Upsilon_{ij}\neq\emp}
\neq\emp,
\end{equation}
where
\begin{equation}
\label{e:24fevrier2009}
(\forall j\in\{i+1,\ldots,m\})\quad\Upsilon_{ij}=\Upsilon_{ji}
=\inte_{\bdry\Omega_i}(\bdry\Omega_i\cap\bdry\Omega_j).
\end{equation}
Moreover, $J(i-)=J(i)\cap\{1,\ldots,i-1\}$ 
and $J(i+)=J(i)\cap\{i+1,\ldots,m\}$, with the convention 
$J(1-)=J(m+)=\emp$.
\item\label{A4}
The set of indices of interfaces is
\begin{equation}
\label{e:nypl2013-06-20a}
K=\menge{(i,j)}{i\in\{1,\ldots,m-1\}\;\;\text{and}\;\;j\in J(i+)}.
\end{equation}
\item
\label{A5}
For every $i\in I$, 
${\mathsf T}_{\!i}\colon H^1(\Omega_i)\to L^2(\bdry\Omega_{i})$ 
is the trace operator. Moreover, 
\begin{equation}
\label{e:HH_iEDP}
\HH_i=H^1_{0,\Upsilon_{ii}}(\Omega_i)=\menge{u\in H^1(\Omega_i)}
{{\mathsf T}_{\!i}\,u=0\:\;\text{on}\:\Upsilon_{ii}},
\end{equation}
endowed with the scalar product
\begin{equation}
\label{scalar}
\scal{u}{v}
=\int_{\Omega_i}(Du)^\top Dv,
\end{equation} 
is a Hilbert space, and, for every $j\in J(i)$, 
${\mathsf T}_{\!ij}\colon\HH_i\to L^2(\Upsilon_{ij})\colon u\mapsto
({\mathsf T}_{\!i}u)|_{\Upsilon_{ij}}$.
\item
\label{A6}
For every $i\in I$,
\begin{equation}
\GG_i=\bigoplus_{j\in J(i)}L^2(\Upsilon_{ij}),
\end{equation}
$\nu_i(\omega)$ is the unit outward normal vector at 
$\omega\in\bdry\Omega_i$, and 
\begin{equation}
\label{e:2011-02-24a}
Q_i\colon L^2(\Omega_i)\times\GG_i\to\HH_i
\end{equation}
is the operator that maps every 
$(f,(h_j)_{j\in J(i)})$ in $L^2(\Omega_i)\times\GG_i$
into the weak solution in $\HH_i$ of the Dirichlet-Neumann 
boundary value problem
\begin{equation}
\label{e:Q_i}
\begin{cases}
-\Delta u=f&\text{on}\;\Omega_i,\\
u=0&\text{on}\;\Upsilon_{ii},\\
\nu_i^\top D u=h_{j}
&\text{on}\;\Upsilon_{ij},\:\:
\text{for every}\:\:j\in J(i+),\\
\nu_i^\top D u=-h_{j}&\text{on}\;\Upsilon_{ij},\:\:
\text{for every}\:\:j\in J(i-).
\end{cases}
\end{equation} 
\item
\label{A7}
For every $(i,j)\in K$, $\varphi_i\in\Gamma_0(\HH_i)$ and
$\psi_{ij}\in\Gamma_0(L^2(\Upsilon_{ij}))$. 
\item
\label{A8}
There exist 
$\widetilde{\boldsymbol{u}}=
(\widetilde{u}_{i})_{i\in I}\in\bigoplus_{i\in I}\HH_i$ and 
$\widetilde{\boldsymbol{g}}=(\widetilde{g}_{ij})_{(i,j)\in K}
\in\bigoplus_{(i,j)\in K}L^2(\Upsilon_{ij})$ such that
\begin{equation}
\label{e:exicond}
(\forall (i,j)\in K)\quad
\begin{cases}
\widetilde{g}_{ij}\in\partial\psi_{ij}
({\mathsf T}_{\!ij}\,\widetilde{u}_i-
{\mathsf T}_{\!ji}\,\widetilde{u}_j)\\
-Q_i\big(0,(\widetilde{g}_{ij})_{j\in J(i+)},
(\widetilde{g}_{ji})_{j\in J(i-)}\big)\in
\partial\varphi_i(\widetilde{u}_i).
\end{cases}
\end{equation}
\item
\label{A9}
Let $\boldsymbol{u}_0=(u_{i,0})_{i\in I}\in\bigoplus_{i\in I}\HH_i$
and let 
$\boldsymbol{g}_0=(g_{ij,0})_{(i,j)\in K}\in\bigoplus_{(i,j)\in K}
L^2(\Upsilon_{ij})$.
\end{enumerate}
The problem is to find the closest point
$(\overline{\boldsymbol{u}},\overline{\boldsymbol{g}})$ to
$(\boldsymbol{u}_0,\boldsymbol{g}_0)$ in 
$\bigoplus_{i\in I}\HH_i\oplus\bigoplus_{(i,j)\in K}
L^2(\Upsilon_{ij})$ that satisfies \eqref{e:exicond}.
\end{problem}

\begin{remark}
In Problem~\ref{prob:1}, \ref{A2}--\ref{A4}
describe the geometrical setting, and \ref{A5}--\ref{A9} fix the
functional Hilbert setting. In particular, item \ref{A8} will
ensure the existence of a solution. For every $i\in I$, since 
$\bdry\Omega_i=\overline{\Upsilon_{ii}}\cup
\overline{\bigcup_{j\in J(i)}\Upsilon_{ij}}$,
the existence and uniqueness of the solution to \eqref{e:Q_i} is
guaranteed by condition~\ref{A2} in Problem~\ref{prob:1} and
\cite[Theorem~25.I]{Zeid90B}, from which we deduce that $Q_i$ is
linear and continuous. 
\end{remark}

In order to analyze and solve Problem~\ref{prob:1}, we shall exploit 
the following connection.

\begin{proposition}
\label{p:0}
Problem~\ref{prob:1} is a special case of Problem~\ref{prob:5}.
\end{proposition}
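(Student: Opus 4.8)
The plan is to exhibit an explicit identification of the data of Problem~\ref{prob:1} with the data of Problem~\ref{prob:5}, and then check that each hypothesis of the latter is met. First I would set, for every $i\in I$, the Hilbert space $\HH_i$ as in \ref{A5}; the index set $K$ as in \ref{A4}; and, for every $k=(i,j)\in K$, the space $\GG_k=L^2(\Upsilon_{ij})$ and the function $\Psi_k=\psi_{ij}$. The point here is that the coupling space $\bigoplus_{k\in K}\GG_k$ and the ``per-subdomain'' coupling space $\bigoplus_{i\in I}\GG_i$ of \ref{A6} are canonically isometric: $\GG_i=\bigoplus_{j\in J(i)}L^2(\Upsilon_{ij})$ distributes the single factor $L^2(\Upsilon_{ij})$ once to $\GG_i$ (if $i<j$, via $J(i+)$) and once to $\GG_j$ (via $J(j-)$), whereas in the $K$-indexing it appears exactly once, as the coordinate $k=(i,j)$. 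I would make this bookkeeping explicit so that sums over $k\in K$ and sums over pairs $\{i,j\}$ are interchangeable.

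Next I would define the linear operators. For $k=(i,j)\in K$, the only subdomains feeding the interface $\Upsilon_{ij}$ are $\Omega_i$ and $\Omega_j$, so I would set $\Lambda_{ki}={\mathsf T}_{\!ij}$, $\Lambda_{kj}=-{\mathsf T}_{\!ji}$, and $\Lambda_{k\ell}=0$ for every $\ell\in I\smallsetminus\{i,j\}$; each ${\mathsf T}_{\!ij}\colon\HH_i\to L^2(\Upsilon_{ij})$ is bounded and linear by \ref{A5} and the trace theorem, hence so is every $\Lambda_{ki}$. With these choices $\sum_{i\in I}\Lambda_{ki}u_i={\mathsf T}_{\!ij}u_i-{\mathsf T}_{\!ji}u_j$, so the coupling term $\sum_{k\in K}\Psi_k(\sum_{i\in I}\Lambda_{ki}u_i)$ of \eqref{e:2012-10-23p} is exactly $\sum_{(i,j)\in K}\psi_{ij}({\mathsf T}_{\!ij}u_i-{\mathsf T}_{\!ji}u_j)$. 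I would then compute the adjoints: for $i<j$, the coordinate-$k$ block of $\sum_{k\in K}\Lambda_{ki}^*w_k$ over the subdomain $\Omega_i$ collects ${\mathsf T}_{\!ij}^*w_{ij}$ for $j\in J(i+)$ and $-{\mathsf T}_{\!ji}^*w_{ji}$ for $j\in J(i-)$. The key functional-analytic point is to recognize this combination of trace adjoints as $Q_i(0,(w_{ij})_{j\in J(i+)},(w_{ji})_{j\in J(i-)})$, i.e. the solution operator of the homogeneous Dirichlet--Neumann problem \eqref{e:Q_i} with flux data on the interfaces; this follows from the weak (variational) formulation of \eqref{e:Q_i} together with the definition \eqref{e:scaledp}--\eqref{scalar} of the scalar product on $\HH_i$ (the $H^1$ seminorm), under which the trace operators are adjoint to the harmonic-extension/Dirichlet-to-Neumann maps, and the well-posedness of $Q_i$ noted in the Remark after Problem~\ref{prob:1}.

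With the dictionary $\Phi_i=\varphi_i$ in place, I would then match the remaining objects: the Kuhn--Tucker inclusions in \eqref{e:2013-12-07k}, namely $-\sum_{k}\Lambda_{ki}^*w_k\in\partial\Phi_i(u_i)$ and $\sum_i\Lambda_{ki}u_i\in\partial\Psi_k^*(w_k)$, become, after the identification above and using $\partial\psi_{ij}^{-1}=\partial\psi_{ij}^*$, precisely the system \eqref{e:exicond} (with $w$ playing the role of $g$); hence the Kuhn--Tucker set $\boldsymbol{Z}$ coincides with the solution set of \eqref{e:exicond}, and the initial point $(\boldsymbol{u}_0,\boldsymbol{g}_0)$ of \ref{A9} matches $(\boldsymbol{u}_0,\boldsymbol{w}_0)$ of Problem~\ref{prob:5}, so the best-approximation goals agree. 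Finally I would verify the qualification condition \eqref{e:2012-10-21a}: this is exactly the assertion that for each $i$ there exist $\widetilde{u}_i$ and dual variables making the inclusions hold, which is the content of \ref{A8} once written in the $K$-indexing. The main obstacle is the middle step: carefully proving that the adjoint combination $\sum_{k\in K}\Lambda_{ki}^*w_k$ equals $Q_i$ applied to the corresponding flux data. This requires using the precise scalar product \eqref{scalar} on $\HH_i$, Green's formula / the variational characterization of \eqref{e:Q_i}, and the sign conventions distinguishing $J(i+)$ from $J(i-)$ in \eqref{e:Q_i}; everything else is routine bookkeeping with the index sets.
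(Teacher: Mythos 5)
Your proposal is correct and follows essentially the same route as the paper: the same dictionary $\Phi_i=\varphi_i$, $\Psi_k=\psi_{ij}$, $\Lambda_{ki}=\mathsf{T}_{ij}$, $\Lambda_{kj}=-\mathsf{T}_{ji}$ (zero otherwise), the same key identification of $\sum_{k\in K}\Lambda_{ki}^*w_k$ with $Q_i\big(0,(g_{ij})_{j\in J(i+)},(g_{ji})_{j\in J(i-)}\big)$ via the scalar product \eqref{scalar} and Green's formula, and the same derivation of \eqref{e:2012-10-21a} from \ref{A8}. The only remaining work would be to write out explicitly the chain of equalities behind the adjoint identification, exactly as the paper does.
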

\begin{proof}
Let us set
\begin{equation}
\label{e:nypl2013-06-25a}
\big(\forall k=(i,j)\in K\big)\quad
\Psi_{k}=\psi_{ij}\quad\text{and}\quad
(\forall\ell\in I)\quad
\Lambda_{k\ell}=
\begin{cases}
\mathsf{T}_{\!ij},&\text{if}\;\;\ell=i;\\
-\mathsf{T}_{\!ji},&\text{if}\;\;\ell=j;\\
0,&\text{otherwise}.
\end{cases}
\end{equation}
We also define 
\begin{equation}
\label{e:erice2013-06-16x}
(\forall i\in I)\quad\Phi_i=\varphi_i.
\end{equation}
For every $i\in I$, it follows from Poincar\'e's inequality, 
that the embedding $\HH_i\hookrightarrow H^1(\Omega_i)$ is 
continuous \cite[p.~1033]{Zeid90B} and therefore, for every 
$j\in J(i)$, the trace operators 
${\mathsf T}_{\!i}\colon H^1(\Omega_i)\to L^2(\bdry\Omega_{i})$ 
and 
${\mathsf T}_{\!ij}\colon\HH_i\to L^2(\Upsilon_{ij})$ are 
linear and bounded. Moreover, for every $i\in I$, every 
$(u_i)_{i\in I}\in\bigoplus_{i\in I}\HH_i$, and every 
\begin{equation}
\label{e:2014-07-02z}
(w_k)_{k\in K}=(g_{ij})_{(i,j)\in K}
\in\bigoplus_{(i,j)\in K}L^2(\Upsilon_{ij}), 
\end{equation}
it follows from \ref{A6} in Problem~\ref{prob:1} that 
\begin{align}
\Scal{u_i}{\sum_{k\in K}\Lambda^*_{k,i}\,w_k}
&=\Scal{u_i}{\sum_{j\in J(i+)}{\mathsf T}^*_{\!ij}g_{ij}
-\sum_{j\in J(i-)}{\mathsf T}^*_{\!ji}g_{ji}}
\nonumber\\
&=\sum_{j\in J(i+)}\scal{{\mathsf T}_{\!ij}u_i}{g_{ij}}
-\sum_{j\in J(i-)}\scal{{\mathsf T}_{\!ji}u_i}{g_{ji}}
\nonumber\\
&=\sum_{j\in J(i+)}\int_{\Upsilon_{ij}}
({\mathsf T}_{\!ij}u_i)g_{ij}\,dS
-\sum_{j\in J(i-)}\int_{\Upsilon_{ij}}
({\mathsf T}_{\!ij}u_i)g_{ji}\,dS\nonumber\\
&=\int_{\bdry \Omega_i}({\mathsf T}_{i}u_i)
\big(\nu_i^{\top}DQ_i(0,(g_{ij})_{j\in J(i+)},(g_{ji})_{j\in
J(i-)})\big)\,dS\nonumber\\
&=\int_{\Omega_i}(Du_i)^\top DQ_i(0,(g_{ij})_{j\in
J(i+)},(g_{ji})_{j\in J(i-)})\nonumber\\
&=\scal{u_i}{Q_i\big(0,(g_{ij})_{j\in J(i+)},
(g_{ji})_{j\in J(i-)}\big)},
\end{align}
which yields
\begin{equation}
\label{e:santiago2012-01-13}
(\forall i\in I)\quad Q_i\big(0,(g_{ij})_{j\in J(i+)},
(g_{ji})_{j\in J(i-)}\big)=
\sum_{k\in K}\Lambda^*_{ki}\,w_{k}.
\end{equation}
It remains to check that \eqref{e:2012-10-21a} is satisfied.  
It follows from \ref{A8} that there exist 
$(\widetilde{u}_i)_{i\in I}\in\bigoplus_{i\in I}\HH_i$ and
$(\widetilde{w}_k)_{k\in K}=(\widetilde{g}_{ij})_{(i,j)\in K}\in
\bigoplus_{(i,j)\in K}L^2(\Upsilon_{ij})$
such that \eqref{e:exicond} holds.  
Combining \ref{A8}, \eqref{e:nypl2013-06-25a}, 
\eqref{e:erice2013-06-16x}, 
and \eqref{e:santiago2012-01-13} we obtain
\begin{eqnarray}
\label{e:santiago2014-01-06a}
\eqref{e:exicond}
&\Leftrightarrow&
\begin{cases}
(\forall i\in I)\quad 
-\sum_{k\in K}\Lambda^*_{ki}\widetilde{w}_{k}
\in\partial\Phi(\widetilde{u}_i)\\
(\forall k\in K)\quad\widetilde{w}_k\in\partial\Psi_k
\big(\sum_{\ell\in I}\Lambda_{k\ell}\widetilde{u}_\ell\big)
\end{cases}\nonumber\\
&\Rightarrow&
0\in\partial\Phi(\widetilde{u}_i)+
\sum_{k\in K}\Lambda^*_{ki}\bigg(\partial\Psi_k
\bigg(\sum_{\ell\in I}\Lambda_{k\ell}\widetilde{u}_\ell\bigg)\bigg)
\nonumber\\
&\Rightarrow&\eqref{e:2012-10-21a},
\end{eqnarray}
which completes  the proof.
\end{proof}

The following proposition clarifies the interplay between
Problem~\ref{prob:1}, \eqref{e:primal}, and its dual.

\begin{proposition}
\label{p:1}
Problem~\ref{prob:1} has a unique solution
$(\overline{\boldsymbol{u}},\overline{\boldsymbol{g}})$. Moreover, 
$\overline{\boldsymbol{u}}=(\overline{u}_i)_{i\in I}$ solves
\begin{equation}
\label{e:erice2013-06-16p}
\minimize{(u_i)_{i\in I}\in\:\underset{i\in I}{\bigoplus}\HH_i}
{\sum_{i\in I}\varphi_i(u_i)+
\sum_{(i,j)\in K}\psi_{ij} 
({\mathsf T}_{\!ij}\,u_i-{\mathsf T}_{\!ji}\,u_j)}
\end{equation}
and $\overline{\boldsymbol{g}}=(\overline{g}_{ij})_{(i,j)\in K}$ 
solves 
\begin{equation}
\label{e:erice2013-06-16d}
\minimize{(g_{ij})_{(i,j)\in K}\in\:
\underset{(i,j)\in K}{\bigoplus}L^2(\Upsilon_{ij})}
{\sum_{i\in I}\varphi_{i}^*
\Big(-Q_i\big(0,(g_{ij})_{j\in J(i+)},(g_{ji})_{j\in J(i-)}
\big)\Big)+\sum_{(i,j)\in K}\psi_{ij}^*(g_{ij})}.
\end{equation}
\end{proposition}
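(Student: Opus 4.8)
The plan is to derive Proposition~\ref{p:1} from Proposition~\ref{p:2014-07-02} by transporting the latter's conclusions through the identification established in Proposition~\ref{p:0}. Since Proposition~\ref{p:0} shows that Problem~\ref{prob:1} is the instance of Problem~\ref{prob:5} obtained via the substitutions \eqref{e:nypl2013-06-25a}--\eqref{e:erice2013-06-16x}, and since under these substitutions the Kuhn-Tucker system \eqref{e:2013-12-07k} is exactly \eqref{e:exicond} (this is the chain of equivalences \eqref{e:santiago2014-01-06a}, together with \eqref{e:santiago2012-01-13} which rewrites $\sum_{k\in K}\Lambda_{ki}^*w_k$ as $Q_i(0,(g_{ij})_{j\in J(i+)},(g_{ji})_{j\in J(i-)})$), the best-approximation problem in Problem~\ref{prob:1} is verbatim the best-approximation problem in Problem~\ref{prob:5}. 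Hence Proposition~\ref{p:2014-07-02} immediately gives existence and uniqueness of $(\overline{\boldsymbol u},\overline{\boldsymbol g})$, and it remains only to show that the primal problem \eqref{e:2012-10-23p} and the dual problem \eqref{e:2012-10-23d} specialize, respectively, to \eqref{e:erice2013-06-16p} and \eqref{e:erice2013-06-16d}.

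For the primal claim I would simply substitute: by \eqref{e:erice2013-06-16x}, $\sum_{i\in I}\Phi_i(u_i)=\sum_{i\in I}\varphi_i(u_i)$, and by \eqref{e:nypl2013-06-25a}, for $k=(i,j)\in K$ we have $\sum_{\ell\in I}\Lambda_{k\ell}u_\ell={\mathsf T}_{\!ij}u_i-{\mathsf T}_{\!ji}u_j$ and $\Psi_k=\psi_{ij}$, so $\sum_{k\in K}\Psi_k(\sum_{\ell\in I}\Lambda_{k\ell}u_\ell)=\sum_{(i,j)\in K}\psi_{ij}({\mathsf T}_{\!ij}u_i-{\mathsf T}_{\!ji}u_j)$. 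This turns \eqref{e:2012-10-23p} into \eqref{e:erice2013-06-16p}. For the dual claim, the key point is the reinterpretation of the adjoint term: for the identification \eqref{e:2014-07-02z} of $(w_k)_{k\in K}$ with $(g_{ij})_{(i,j)\in K}$, formula \eqref{e:santiago2012-01-13} gives $-\sum_{k\in K}\Lambda_{ki}^*w_k=-Q_i(0,(g_{ij})_{j\in J(i+)},(g_{ji})_{j\in J(i-)})$, and $\Phi_i^*=\varphi_i^*$, $\Psi_k^*=\psi_{ij}^*$; substituting these into \eqref{e:2012-10-23d} yields \eqref{e:erice2013-06-16d}.

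Two minor points deserve care. First, one must check that the Hilbert direct sum in which Proposition~\ref{prob:5} lives matches the one named in Problem~\ref{prob:1}: the dual space $\GGG=\bigoplus_{k\in K}\GG_k$ of Problem~\ref{prob:5} is, under \eqref{e:nypl2013-06-25a} with $\GG_k=L^2(\Upsilon_{ij})$ for $k=(i,j)$, canonically $\bigoplus_{(i,j)\in K}L^2(\Upsilon_{ij})$, so the ambient space $\HHH\oplus\GGG$ coincides with $\bigoplus_{i\in I}\HH_i\oplus\bigoplus_{(i,j)\in K}L^2(\Upsilon_{ij})$ and the initial point $(\boldsymbol u_0,\boldsymbol g_0)$ of \ref{A9} is the initial point $(\boldsymbol u_0,\boldsymbol w_0)$ of Problem~\ref{prob:5}; thus the projections agree. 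Second, one should note that Proposition~\ref{p:0} already verified the standing hypothesis \eqref{e:2012-10-21a}, so Proposition~\ref{p:2014-07-02} genuinely applies.

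I do not expect a serious obstacle here: the proposition is essentially a translation lemma, and all the analytical content — nonemptiness of the Kuhn-Tucker set, the identity \eqref{e:santiago2012-01-13} relating $Q_i$ to the adjoint trace operators, and the primal/dual correspondence — has already been established in Proposition~\ref{p:0} and Proposition~\ref{p:2014-07-02}. The only mild care needed is bookkeeping of the index correspondence $k\leftrightarrow(i,j)$ and making sure the ``otherwise $0$'' branch of \eqref{e:nypl2013-06-25a} correctly produces the two-term expression ${\mathsf T}_{\!ij}u_i-{\mathsf T}_{\!ji}u_j$ rather than a sum over all $\ell\in I$. So the proof will be short: invoke Proposition~\ref{p:0} and Proposition~\ref{p:2014-07-02}, record the substitutions, and read off \eqref{e:erice2013-06-16p} and \eqref{e:erice2013-06-16d}.
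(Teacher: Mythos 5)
Your proposal is correct and follows exactly the paper's own route: the paper proves Proposition~\ref{p:1} in one line by invoking Proposition~\ref{p:0} and Proposition~\ref{p:2014-07-02} together with the substitutions \eqref{e:nypl2013-06-25a}, \eqref{e:erice2013-06-16x}, \eqref{e:2014-07-02z}, and \eqref{e:santiago2012-01-13}. Your additional bookkeeping (index correspondence, identification of the ambient Hilbert direct sums, and the prior verification of \eqref{e:2012-10-21a}) simply makes explicit what the paper leaves implicit.
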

\begin{proof}
This follows from Proposition~\ref{p:0} and 
Proposition~\ref{p:2014-07-02} applied with 
\eqref{e:nypl2013-06-25a}, \eqref{e:erice2013-06-16x}, 
\eqref{e:2014-07-02z}, and \eqref{e:santiago2012-01-13}.
\end{proof}

Our objective is to provide a flexible method for solving
Problem~\ref{prob:1} (and hence \eqref{e:erice2013-06-16p} and 
\eqref{e:erice2013-06-16d}) in which each elementary step 
involves the constituents of the problem, i.e., the trace 
operators and the functions, separately. 
\newpage

\begin{theorem}
\label{t:1}
Let $\varepsilon\in\zeroun$, let $(\gamma_n)_{n\in\NN}$ and 
$(\mu_n)_{n\in\NN}$ be sequences in 
$[\varepsilon,1/\varepsilon]$, let $(\lambda_n)_{n\in\NN}$ be a
sequence in $[\varepsilon,1]$, and iterate
\begin{equation}
\label{e:erice2013-06-16a}
\begin{array}{l}
\text{for}\;n=0,1,\ldots\\
\left\lfloor
\begin{array}{l}
\text{for every}\;i\in I\\
\left\lfloor
\begin{array}{l}
v_{i,n}=u_{i,n}-\gamma_nQ_i
\big(0,(g_{ij,n})_{j\in J(i+)},(g_{ji,n})_{j\in J(i-)}\big)
\\[1mm]
p_{i,n}=\prox_{\gamma_n\varphi_i}v_{i,n}
\end{array}
\right.\\
\text{for every}\;i\in I\\
\left\lfloor
\begin{array}{l}
\text{for every}\;j\in J(i+)\\
\left\lfloor
\begin{array}{l}
l_{ij,n}={\mathsf T}_{\!ij}u_{i,n}-{\mathsf T}_{\!ji}u_{j,n}\\
q_{ij,n}=\prox_{\mu_n\psi_{ij}}(l_{ij,n}+\mu_ng_{ij,n})\\
t_{ij,n}=q_{ij,n}-{\mathsf T}_{\!ij}p_{i,n}
+{\mathsf T}_{\!ji}p_{j,n}\\
\end{array}
\right.\\
\end{array}
\right.\\
\text{for every}\;i\in I\\
\left\lfloor
\begin{array}{l}
s_{i,n}=\gamma_n^{-1}(u_{i,n}-p_{i,n})+
\mu_n^{-1}Q_i\big(0,(l_{ij,n}-q_{ij,n})_{j\in J(i+)},
(l_{ji,n}-q_{ji,n})_{j\in J(i-)}\big)\\
\end{array}
\right.\\
\tau_n=\sum_{i\in I}\|s_{i,n}\|^2+\sum_{(i,j)\in K}
\|t_{ij,n}\|^2\\
\text{if}\;\tau_n=0\\
\left\lfloor
\begin{array}{l}
\theta_n=0\\
\end{array}
\right.\\
\text{if}\;\tau_n>0\\
\left\lfloor
\begin{array}{l}
\theta_n=\lambda_n\big(\gamma_n^{-1}\sum_{i\in I}
\|u_{i,n}-p_{i,n}\|^2+\mu_n^{-1}
\sum_{(i,j)\in K}\|l_{ij,n}-q_{ij,n}\|^2\big)/\tau_n\\
\end{array}
\right.\\
\text{for every}\;i\in I\\
\left\lfloor
\begin{array}{l}
u_{i,n+1/2}=u_{i,n}-\theta_n s_{i,n}\\
\text{for every}\;j\in J(i+)\\
\left\lfloor
\begin{array}{l}
g_{ij,n+1/2}=g_{ij,n}-\theta_n t_{ij,n}
\end{array}
\right.\\
\end{array}
\right.\\
\chi_n=\sum_{i\in I}\scal{u_{i,0}-u_{i,n}}{u_{i,n}-u_{i,n+1/2}}
+\sum_{(i,j)\in K}\scal{g_{ij,0}-g_{ij,n}}
{g_{ij,n}-g_{ij,n+1/2}}\\
\mu_n=\sum_{i\in I}\|u_{i,0}-u_{i,n}\|^2+\sum_{(i,j)\in K}
\|g_{ij,0}-g_{ij,n}\|^2\\
\nu_n=\sum_{i\in I}\|u_{i,n}-u_{i,n+1/2}\|^2+
\sum_{(i,j)\in K}\|g_{ij,n}-g_{ij,n+1/2}\|^2\\
\rho_n=\mu_n\nu_n-\chi_n^2\\
\text{if}\;\rho_n=0\;\text{and}\;\chi_n\geq 0\\
\left\lfloor
\begin{array}{l}
\text{for every}\;i\in I\\
\left\lfloor
\begin{array}{l}
u_{i,n+1}=u_{i,n+1/2}\\
\text{for every}\;j\in J(i+)\\
\left\lfloor
\begin{array}{l}
g_{ij,n+1}=g_{ij,n+1/2}
\end{array}
\right.\\
\end{array}
\right.\\
\end{array}
\right.\\
\text{if}\;\rho_n>0\;\text{and}\;\chi_n\nu_n\geq\rho_n\\
\left\lfloor
\begin{array}{l}
\text{for every}\;i\in I\\
\left\lfloor
\begin{array}{l}
u_{i,n+1}=u_{i,0}+(1+\chi_n/\nu_n)(u_{i,n+1/2}-u_{i,n})\\
\text{for every}\;j\in J(i+)\\
\left\lfloor
\begin{array}{l}
g_{ij,n+1}=g_{ij,0}+(1+\chi_n/\nu_n)(g_{ij,n+1/2}-g_{ij,n})
\end{array}
\right.\\
\end{array}
\right.\\
\end{array}
\right.\\
\text{if}\;\rho_n>0\;\text{and}\;\chi_n\nu_n<\rho_n\\
\left\lfloor
\begin{array}{l}
\text{for every}\;i\in I\\
\left\lfloor
\begin{array}{l}
u_{i,n+1}=u_{i,n}+(\nu_n/\rho_n)\big(\chi_n(u_{i,0}-u_{i,n})
+\mu_n(u_{i,n+1/2}-u_{i,n})\big)\\
\text{for every}\;j\in J(i+)\\
\left\lfloor
\begin{array}{l}
g_{ij,n+1}=g_{ij,n}+(\nu_n/\rho_n)
\big(\chi_n(g_{ij,0}-g_{ij,n})
+\mu_n(g_{ij,n+1/2}-g_{ij,n})\big).
\end{array}
\right.\\
\end{array}
\right.\\
\end{array}
\right.\\
\end{array}
\right.\\
\end{array}
\end{equation}
Then, for every $i\in I$ and $j\in J(i+)$, 
$u_{i,n}\stro{\HH_i}\overline{u}_i$ and
$g_{ij,n}\stro{L^2(\Upsilon_{ij})}\overline{g}_{ij}$.
\end{theorem}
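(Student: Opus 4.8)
The strategy is to show that the iteration \eqref{e:erice2013-06-16a} is exactly the instance of the abstract algorithm \eqref{e:2014-06-05a} obtained when Problem~\ref{prob:1} is cast as Problem~\ref{prob:5} via the identifications \eqref{e:nypl2013-06-25a}--\eqref{e:erice2013-06-16x}, and then to invoke Theorem~\ref{t:5}. By Proposition~\ref{p:0}, Problem~\ref{prob:1} is a special case of Problem~\ref{prob:5} with $\HHH=\bigoplus_{i\in I}\HH_i$, $\GGG=\bigoplus_{(i,j)\in K}L^2(\Upsilon_{ij})$, $\Phi_i=\varphi_i$, $\Psi_{(i,j)}=\psi_{ij}$, and the $\Lambda_{k\ell}$ given by \eqref{e:nypl2013-06-25a}; condition \eqref{e:2012-10-21a} holds by the computation in the proof of Proposition~\ref{p:0}, so Theorem~\ref{t:5} is applicable and the unique solution to Problem~\ref{prob:5} is the pair $(\overline{\boldsymbol u},\overline{\boldsymbol g})$ of Proposition~\ref{p:1}. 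It then remains to verify, line by line, that every intermediate quantity appearing in \eqref{e:2014-06-05a} coincides with the corresponding quantity in \eqref{e:erice2013-06-16a} under this identification, after which the strong convergence $u_{i,n}\stro{\HH_i}\overline u_i$ and $g_{ij,n}\stro{L^2(\Upsilon_{ij})}\overline g_{ij}$ is precisely the conclusion of Theorem~\ref{t:5}.

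The first task is to identify the $\Lambda$-terms. For a fixed $i\in I$, only the indices $k=(i,j)$ with $j\in J(i+)$ and $k=(j,i)$ with $j\in J(i-)$ contribute to $\sum_{k\in K}\Lambda_{ki}^*w_k$, and by \eqref{e:nypl2013-06-25a} the contribution is $\sum_{j\in J(i+)}\mathsf T_{ij}^*g_{ij}-\sum_{j\in J(i-)}\mathsf T_{ji}^*g_{ji}$. By \eqref{e:santiago2012-01-13} this equals $Q_i\big(0,(g_{ij})_{j\in J(i+)},(g_{ji})_{j\in J(i-)}\big)$; applied to $w_{k,n}$ this turns the line $v_{i,n}=u_{i,n}-\gamma_n\sum_k\Lambda_{ki}^*w_{k,n}$ of \eqref{e:2014-06-05a} into the first line of \eqref{e:erice2013-06-16a}, and hence $p_{i,n}=\prox_{\gamma_n\varphi_i}v_{i,n}$ matches as well. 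Dually, for $k=(i,j)\in K$ one has $\sum_{\ell\in I}\Lambda_{k\ell}u_{\ell,n}=\mathsf T_{ij}u_{i,n}-\mathsf T_{ji}u_{j,n}=l_{ij,n}$, so $q_{(i,j),n}=\prox_{\mu_n\psi_{ij}}(l_{ij,n}+\mu_ng_{ij,n})=q_{ij,n}$ and $t_{(i,j),n}=q_{ij,n}-(\mathsf T_{ij}p_{i,n}-\mathsf T_{ji}p_{j,n})=t_{ij,n}$, matching \eqref{e:erice2013-06-16a}. The only remaining nontrivial identification is the $s_{i,n}$ line: the abstract formula has $\mu_n^{-1}\sum_{k\in K}\Lambda_{ki}^*(l_{k,n}-q_{k,n})$, and applying the same computation as for the $v$-line (with $g_{ij}$ replaced by $l_{ij,n}-q_{ij,n}$) and invoking \eqref{e:santiago2012-01-13} once more shows this equals $\mu_n^{-1}Q_i\big(0,(l_{ij,n}-q_{ij,n})_{j\in J(i+)},(l_{ji,n}-q_{ji,n})_{j\in J(i-)}\big)$, which is exactly the $s_{i,n}$ line of \eqref{e:erice2013-06-16a}.

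Once these local-to-global correspondences are in place, everything downstream matches verbatim: $\tau_n$, the test $\tau_n\lessgtr0$, $\theta_n$, the half-step updates $u_{i,n+1/2}$, $w_{k,n+1/2}$, the Haugazeau scalars $\chi_n,\mu_n,\nu_n,\rho_n$, and the three branches computing $u_{i,n+1}$, $w_{k,n+1}$ involve only the primal/dual variables and the already-identified quantities, with the indexing $k=(i,j)\in K$ rewritten as the nested loop over $i\in I$, $j\in J(i+)$. Therefore \eqref{e:erice2013-06-16a} is a faithful rewriting of \eqref{e:2014-06-05a} for this problem, and Theorem~\ref{t:5} yields the asserted strong convergence. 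I expect the main obstacle to be purely bookkeeping: correctly matching the sign pattern in \eqref{e:nypl2013-06-25a} (the $+\mathsf T_{ij}$ for $j\in J(i+)$ versus $-\mathsf T_{ji}$ for $j\in J(i-)$) with the definition of $Q_i$ in \eqref{e:Q_i} and with the single-index summation $\sum_{k\in K}$ versus the double-index summation used in \eqref{e:erice2013-06-16a}, so that each adjoint term $\Lambda_{ki}^*$ is attributed to the right subdomain and the identity \eqref{e:santiago2012-01-13} is applied with the correct arguments.
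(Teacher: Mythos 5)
Your proposal is correct and follows exactly the route the paper takes: its proof consists of citing \eqref{e:nypl2013-06-25a}, \eqref{e:erice2013-06-16x}, \eqref{e:2014-07-02z}, and \eqref{e:santiago2012-01-13} to recognize \eqref{e:erice2013-06-16a} as the instance of \eqref{e:2014-06-05a} corresponding to Proposition~\ref{p:0}, and then invoking Theorem~\ref{t:5}. Your line-by-line verification of the correspondence (in particular the use of \eqref{e:santiago2012-01-13} for the $v_{i,n}$ and $s_{i,n}$ lines and the sign bookkeeping for $J(i+)$ versus $J(i-)$) simply makes explicit what the paper leaves to the reader.
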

\begin{proof}
Using \eqref{e:nypl2013-06-25a}, \eqref{e:erice2013-06-16x}, and 
\eqref{e:2014-07-02z}, it follows from  
\eqref{e:santiago2012-01-13} that \eqref{e:erice2013-06-16a} is 
a special case of \eqref{e:2014-06-05a}. In view of 
Proposition~\ref{p:0} and Theorem~\ref{t:5}, the proof is complete.
\end{proof}

\begin{remark}
\label{r:genova2014-07-08}
Algorithm \eqref{e:erice2013-06-16a} is mainly organized as a
series of loops indexed by the variables $i$ and $j$ that can 
be executed simultaneously and, therefore, implemented on 
parallel processors. The first loop computes $v_{i,n}$ as well as 
$p_{i,n}=\prox_{\gamma_n\varphi_i}v_{i,n}$ for each subdomain 
$i\in I$. The computation of $v_{i,n}$ involves the operator 
$Q_i$ which, in view of Problem~\ref{prob:1}\ref{A6}, amounts to 
solving the Dirichlet-Neumann boundary problem
\begin{equation}
\label{e:genova2014-07-07}
\begin{cases}
-\Delta u=0&\text{on}\;\Omega_i,\\
u=0&\text{on}\;\Upsilon_{ii},\\
\nu_i^\top D u=g_{ij,n}
&\text{on}\;\Upsilon_{ij},\:\:
\text{for every}\:\:j\in J(i+),\\
\nu_i^\top D u=-g_{ji,n}&\text{on}\;\Upsilon_{ij},\:\:
\text{for every}\:\:j\in J(i-).
\end{cases}
\end{equation} 
On the other hand, it follows from \eqref{e:prox} that 
\begin{equation}
p_{i,n}=
\displaystyle{\argmin{w\in\HH_i}
{\gamma_n\varphi_i(w)+\frac{1}{2}\int_{\Omega_i}
\big|Dw-Dv_{i,n}\big|^2}}.
\end{equation}
Likewise, the proximity operation across interface $\Upsilon_{ij}$ in 
the 
next loop is computed as
\begin{equation}
q_{ij,n}=\displaystyle{
\argmin{w\in L^2({\Upsilon_{ij}})}
{\mu_n\psi_{ij}(w)+
\frac12\int_{\Upsilon_{ij}}
\big|w-l_{ij,n}-\mu_ng_{ij,n}\big|^2dS}}.
\end{equation}
The remaining steps involve straightforward computations.
\end{remark}

\begin{remark}
The variational formulation of Problem~\ref{prob:1} can be modified
to include domain decomposition problems with overlapping 
subdomains. Indeed, for every $i\in I$ and $j\in J(i+)$, it is
necessary to consider a projection operator 
$P_{ij}\colon\HH_i\to H^1(\Omega_i\cap\Omega_j)$ instead of 
the trace operator $\mathsf{T}_{ij}\colon
\HH_i\to L^2(\Upsilon_{ij})$. An application of 
the overlapping framework to image processing with total variation 
and $\ell^1$ minimization can be found in \cite{Forn10}.
\end{remark}

\begin{remark}
An alternative approach in order to guarantee condition \ref{A8} in
Problem~\ref{prob:1} is to replace the Hilbert spaces 
$(L^2(\Upsilon_{ij}))_{(i,j)\in K}$ by
$(H^{1/2}(\Upsilon_{ij}))_{(i,j)\in K}$, in which case the trace 
operators are surjective \cite[Theorem~1.5.1.2]{Gris85}. The 
difficulty of this approach resides in the computation of the
proximity operators $(\prox_{\psi_{ij}})_{(i,j)\in K}$
in \eqref{e:erice2013-06-16a}, which is not easy because of the 
complexity of the metric of 
$(H^{1/2}(\Upsilon_{ij}))_{(i,j)\in K}$.
\end{remark}

\section{Special cases}
\label{sec:4}

We illustrate the potential use of algorithm 
\eqref{e:erice2013-06-16a} through a few applications 
to domain decomposition in the context of the Poisson, 
$p$--Laplacian, and obstacle problems with Dirichlet conditions
and continuity at the interfaces. 
We start with a couple of technical facts. First, define
\begin{equation}
\label{e:E_i}
(\forall i\in I)\quad E_i^p=\menge{u\in W^{1,p}(\Omega_i)}
{\mathsf{T}_{\!i}u=0\:\:\text{on }\Upsilon_{ii}}.
\end{equation}

\begin{proposition}
\label{p:gen}
Consider the setting of Problem~\ref{prob:1}. Let 
$p\in\left]1,\pinf\right[$, for every $i\in I$ let
$\phi_i\in\Gamma_0(W^{1,p}(\Omega_i))$ be a strictly convex coercive 
function with respect to the $W^{1,p}(\Omega_i)$ norm, and set 
\begin{equation}
\label{e:2014-07-09}
\varphi\colon W^{1,p}(\Omega)\to\RX\colon u\mapsto\sum_{i\in I}
\phi_i(u|_{\Omega_i}).
\end{equation} 
Then $\varphi$ is a strictly convex coercive function in
$\Gamma_0(W^{1,p}(\Omega))$ which is coercive 
with respect to the $W^{1,p}(\Omega)$ norm,
and the optimization problems
\begin{equation}
\label{e:gen1}
\minimize{u\in W_0^{1,p}(\Omega)}{\varphi(u)} 
\end{equation}
and 
\begin{equation}
\label{e:gen2}
\minimize{\substack{(u_i)_{i\in I}\in\bigtimes_{i\in I}E_i^p\\
(\forall (i,j)\in K)\:
\mathsf{T}_{ij}u_i=\mathsf{T}_{ji}u_j}}{\sum_{i\in I}\phi_i(u_i)} 
\end{equation}
have unique solutions $\overline{u}\in W_0^{1,p}(\Omega)$ and 
$(\overline{u}_i)_{i\in I}\in E_1^p\times\cdots\times E_m^p$, 
respectively. Moreover,
\begin{equation}
\label{e:defugen}
(\forall i \in I)\quad \overline{u}(x) = \overline{u}_i(x) \quad 
\text{for almost every } x\in \Omega_i.
\end{equation}
\end{proposition}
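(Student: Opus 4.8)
The plan is to establish the three assertions in turn: well-posedness of \eqref{e:gen1}, well-posedness of \eqref{e:gen2}, and the identification \eqref{e:defugen}. First I would record the structural properties of $\varphi$. Since each $\phi_i$ lies in $\Gamma_0(W^{1,p}(\Omega_i))$ and the restriction map $u\mapsto u|_{\Omega_i}$ is linear and continuous from $W^{1,p}(\Omega)$ to $W^{1,p}(\Omega_i)$, each summand $u\mapsto\phi_i(u|_{\Omega_i})$ is in $\Gamma_0(W^{1,p}(\Omega))$, hence so is the finite sum $\varphi$, provided it is proper; properness follows by picking a point in the (nonempty) intersection of the domains, or more simply from the coercivity argument below producing finite values. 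For strict convexity: if $\varphi(\tfrac{u+v}{2})=\tfrac12\varphi(u)+\tfrac12\varphi(v)$ with $u\neq v$ in $W^{1,p}(\Omega)$, then $u|_{\Omega_i}\neq v|_{\Omega_i}$ for some $i$ (as $\overline\Omega=\bigcup_i\overline{\Omega_i}$ forces the restrictions to determine $u$ a.e.), and strict convexity of that $\phi_i$ gives a strict inequality in the $i$th term, contradicting equality in the sum. For coercivity with respect to the $W^{1,p}(\Omega)$ norm: a standard equivalence-of-norms fact gives constants so that $\|u\|_{W^{1,p}(\Omega)}\to\infty$ implies $\|u|_{\Omega_i}\|_{W^{1,p}(\Omega_i)}\to\infty$ for at least one index $i$ along any subsequence; combined with the lower bound $\phi_j\geq$ (affine minorant, from $\Gamma_0$) on the other terms, $\varphi(u)\to\infty$.

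Next I would invoke the direct method. On the reflexive Banach space $W_0^{1,p}(\Omega)$ (closed subspace of the reflexive $W^{1,p}(\Omega)$ for $p\in\left]1,\pinf\right[$), the function $\varphi$ restricted to $W_0^{1,p}(\Omega)$ is weakly lower semicontinuous (lsc + convex), coercive, and proper, so \eqref{e:gen1} attains its infimum; strict convexity gives uniqueness of $\overline{u}$. For \eqref{e:gen2} I would work on the product space $\bigtimes_{i\in I}E_i^p$, where each $E_i^p$ from \eqref{e:E_i} is a closed subspace of the reflexive $W^{1,p}(\Omega_i)$, hence reflexive, and the product is reflexive. The constraint set $C=\{(u_i)_{i\in I}\mid(\forall(i,j)\in K)\ \mathsf{T}_{ij}u_i=\mathsf{T}_{ji}u_j\}$ is a closed linear subspace (each $\mathsf{T}_{ij}$ is bounded linear, and weak convergence in $E_i^p$ forces weak, hence by compactness of the trace operator even strong, convergence of the traces in $L^2(\Upsilon_{ij})$). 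The objective $(u_i)_{i\in I}\mapsto\sum_i\phi_i(u_i)$ is convex, weakly lsc, proper, and coercive on the product (each $\phi_i$ is coercive in its own norm), so it attains its infimum over the closed convex set $C$; strict convexity in each coordinate gives uniqueness of $(\overline{u}_i)_{i\in I}$.

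Finally, for \eqref{e:defugen} I would set up a bijection between feasible points. Given $u\in W_0^{1,p}(\Omega)$, the tuple $(u|_{\Omega_i})_{i\in I}$ lies in $\bigtimes_{i\in I}E_i^p$ (the zero boundary condition on $\Upsilon_{ii}$ and on $\bdry\Omega$ transfers via $\mathsf{T}_i$) and satisfies the interface equalities $\mathsf{T}_{ij}(u|_{\Omega_i})=\mathsf{T}_{ji}(u|_{\Omega_j})$ since both sides equal the trace of the single global function $u$ on $\Upsilon_{ij}$; conversely, given a feasible $(u_i)_{i\in I}\in C$, matching traces across every active interface together with $\overline\Omega=\bigcup_i\overline{\Omega_i}$ and the zero data on the outer boundary pieces $\Upsilon_{ii}$ allow one to glue the $u_i$ into a single function $u$ with $u|_{\Omega_i}=u_i$ a.e., and a classical gluing lemma for Sobolev functions (matching traces across a Lipschitz interface implies the glued function is in $W^{1,p}$, with $W_0^{1,p}(\Omega)$ membership from the vanishing traces on $\Upsilon_{ii}$) shows $u\in W_0^{1,p}(\Omega)$. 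Under this correspondence the objectives agree: $\varphi(u)=\sum_i\phi_i(u|_{\Omega_i})=\sum_i\phi_i(u_i)$. Hence minimizers correspond to minimizers, and by the uniqueness established above, $\overline u$ and $(\overline u_i)_{i\in I}$ must be paired, i.e. $\overline u|_{\Omega_i}=\overline u_i$ a.e., which is \eqref{e:defugen}. The main obstacle is the gluing step: one must verify carefully that equality of $L^2$ traces across a Lipschitz interface $\Upsilon_{ij}$ is exactly the condition ensuring the piecewise-defined function has a weak gradient with no singular part on the interface, so that it belongs to $W^{1,p}(\Omega)$ — this is where the Lipschitz regularity of the $\bdry\Omega_i$ and the structure of the decomposition in Problem~\ref{prob:1}\ref{A2} are essential.
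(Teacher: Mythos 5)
Your proposal is correct and follows essentially the same route as the paper: strict convexity via the fact that $u\neq v$ forces $u|_{\Omega_j}\neq v|_{\Omega_j}$ for some $j$, coercivity via the additive decomposition of the $W^{1,p}(\Omega)$ norm over the subdomains, existence/uniqueness by the direct method on reflexive spaces, and the identification \eqref{e:defugen} via the trace-matching characterization of $W^{1,p}_0(\Omega)$ (the ``gluing lemma,'' which the paper takes from \cite[Lemma~6.4.1]{Atto06} together with the Dirichlet boundary condition). The only cosmetic difference is that you phrase the last step as a bijection between feasible sets, whereas the paper only glues the $(\overline{u}_i)_{i\in I}$ into a competitor $\widetilde{u}$ for \eqref{e:gen1} and invokes uniqueness; the content is identical.
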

\begin{proof}
Let $u$ and $v$ be functions in $W^{1,p}(\Omega)$ such that 
$u\neq v$, and let $\alpha\in\zeroun$. There exists a measurable
set $U\subset\Omega$ of nonzero Lebesgue measure such that
$(\forall x\in U)$ $u(x)\neq v(x)$. For every $i\in I$, set 
$U_i=U\cap\Omega_i$. By assumption \ref{A2} in Problem~\ref{prob:1}, 
and the additivity property of the Lebesgue measure, there exists 
$j\in I$ such that $U_{j}$ has nonzero measure, which yields 
$u|_{\Omega_{j}}\neq v|_{\Omega_{j}}$. It then follows from the 
strict convexity of the functions $(\phi_i)_{i\in I}$ that
\begin{equation}
\sum_{i\in I}\phi_i\big((\alpha u+(1-\alpha)v)|_{\Omega_i}\big)
=\sum_{i\in I}\phi_i\big(\alpha 
u|_{\Omega_i}+(1-\alpha)v|_{\Omega_i}\big)
<\alpha\sum_{i\in I}\phi_i(u|_{\Omega_i})+(1-\alpha)
\sum_{i\in I}\phi_i(v|_{\Omega_i}),
\end{equation}
which shows that $\varphi$ is strictly convex. On the other hand, 
since assumption \ref{A2} in Problem~\ref{prob:1} yields, for every 
$u\in W^{1,p}(\Omega)$,
\begin{equation}
\|u\|_{W^{1,p}(\Omega)}^p=\int_{\Omega}|u|^p+\int_{\Omega}|Du|^p
=\sum_{i\in I}\int_{\Omega_i}|u|^p+\int_{\Omega_i}|Du|^p=\sum_{i\in 
I}\|u|_{\Omega_i}\|_{W^{1,p}(\Omega_i)}^p,
\end{equation}
the coercivity of $\varphi$ follows from the coercivity of 
the functions $(\phi_i)_{i\in I}$.

The existence of solutions $\overline{u}\in W_0^{1,p}(\Omega)$ and 
$(\overline{u}_i)_{i\in I}\in E_1^p\times\cdots\times E_m^p$,
respectively to \eqref{e:gen1} and \eqref{e:gen2},
follows from the classical theorems for the minimization of closed 
convex coercive functions on reflexive Banach spaces (see, e.g., 
\cite[Theorem~3.3.4]{Atto06}, \cite[Theorem~2.5.1(ii)]{Zali02}). 
The uniqueness is a consequence of the strict convexity of the 
objective functions. Set 
\begin{equation}
\label{e:tilde}
(\forall i\in I)\quad  \widetilde{u}(x)= \overline{u}_i(x)\quad
\text{for almost every }x\in\Omega_i.
\end{equation}
Since $\Omega\smallsetminus\bigcup_{i\in I}\Omega_i$ 
has zero Lebesgue measure, it follows from 
condition~\ref{A2} in Problem~\ref{prob:1}
that the function $\widetilde{u}$ is
well defined in $L^p(\Omega)$.
Let us prove that $\widetilde{u} = \overline{u}$, which will
complete the proof.
Arguing as in \cite[Lemma~6.4.1]{Atto06}, we deduce that, 
for every $u\in L^p(\Omega)$,
\begin{equation}
\label{jumpcond1gen}
u\in W^{1,p}(\Omega)\quad\Leftrightarrow\quad(\forall (i,j)\in K)
\quad u|_{\Omega_i}\in W^{1,p}(\Omega_i)\:\:
\text{and}\:\:\mathsf{T}_{\!ij}(u|_{\Omega_i})=
\mathsf{T}_{\!ji}(u|_{\Omega_j}).
\end{equation}
The characterization \eqref{jumpcond1gen} expresses the fact that 
the jumps of every $u\in W^{1,p}(\Omega)$ across the interfaces 
$(\Upsilon_{ij})_{(i,j)\in K}$ are zero.
Correspondingly, taking into account the Dirichlet boundary 
condition \cite[Section~2.1]{Ekel74}, we deduce that, for every 
$u\in L^p(\Omega)$,
\begin{equation}
\label{jumpcondgen}
u\in W^{1,p}_0(\Omega)\quad\Leftrightarrow\quad(\forall (i,j)\in K)
\quad u|_{\Omega_i}\in E_i^p\:\:\text{and}\:\:
\mathsf{T}_{\!ij}(u|_{\Omega_i})
=\mathsf{T}_{\!ji}(u|_{\Omega_j}).
\end{equation}
It then follows from \eqref{e:tilde} that, for every $i\in I$, 
$\widetilde{u}|_{\Omega_i}=\overline{u}_i\in E_i^p$, and, for
every $(i,j)\in K$, $\mathsf{T}_{\!ij}(\widetilde{u}|_{\Omega_i})
=\mathsf{T}_{\!ij}\overline{u}_i=\mathsf{T}_{\!ji}\overline{u}_j=
\mathsf{T}_{\!ji}(\widetilde{u}|_{\Omega_j})$. Hence,
\eqref{jumpcondgen}
yields $\widetilde{u}\in W^{1,p}_0(\Omega)$ and, for every 
$u\in W^{1,p}_0(\Omega)$, \eqref{e:2014-07-09} yields (the sets 
$(\Omega_i)_{i\in I}$ are disjoint, and the Lebesgue measure of 
the interfaces is zero)
\begin{equation}
\varphi(\widetilde{u})=\sum_{i\in I}
\phi_i(\overline{u}_i)\leq\sum_{i\in I}
\phi_i(u|_{\Omega_i})=\varphi(u),
\end{equation}
which, by uniqueness of the solution, yields
$\widetilde{u}=\overline{u}$.
\end{proof}

\begin{proposition}
\label{p:13}
Consider the setting of Problem~\ref{prob:1}. Let 
$\gamma\in\RPP$, let $f\in L^2(\Omega)$, and, for every $i\in I$, 
let $C_i$ be a nonempty closed convex subset of $\HH_i$.
Suppose that
\begin{equation}
\label{e:varphi_i}
\varphi_i\colon\HH_i\to\RX\colon u_i\mapsto\iota_{C_i}(u_i)+\Frac12
\int_{\Omega_i}|Du_i|^2-\int_{\Omega_i}fu_i.
\end{equation}
Then the following hold for every $i\in I$:
\begin{enumerate}
\item 
We have
\label{p:13i} 
\begin{equation}
\begin{cases}
\varphi_i\colon u_i\mapsto\iota_{C_i}(u_i)+\Frac12\|u_i\|^2-
\scal{Q_i(f,0,\ldots,0)}{u_i}\\
\partial\varphi_i =N_{C_i}+\Id-Q_i(f,0,\ldots,0)\\
\prox_{\gamma\varphi_i}=P_{C_i}\bigg(\Frac{1}{1+\gamma}\Id+
\Frac{\gamma}{1+\gamma}Q_i(f,0,\ldots,0)\bigg).
\end{cases}
\end{equation}
\item
\label{p:13ii} 
Suppose that $C_i=\HH_i$. Then $\varphi_i$ is 
G\^ateaux--differentiable on $\HH_i$ and
\begin{equation}
\begin{cases}
\label{e:l1ii}
\varphi_i\colon u_i\mapsto\Frac12\|u_i\|^2-
\scal{Q_i(f,0,\ldots,0)}{u_i}\\
\nabla\varphi_i=\Id-Q_i(f,0,\ldots,0)\\
\prox_{\gamma\varphi_i}=\Frac{1}{1+\gamma}\Id+
\Frac{\gamma}{1+\gamma}Q_i(f,0,\ldots,0).
\end{cases}
\end{equation}
\end{enumerate}
\end{proposition}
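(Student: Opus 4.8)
The plan is to express $\varphi_i$ in terms of the Hilbert-space data of $\HH_i$ via the weak formulation of $Q_i$, and then to invoke standard convex calculus. Since the scalar product on $\HH_i$ is $\scal{u}{v}=\int_{\Omega_i}(Du)^\top Dv$, we have $\|u_i\|^2=\int_{\Omega_i}|Du_i|^2$, so only the linear term of $\varphi_i$ requires attention. Set $r_i=Q_i(f,0,\ldots,0)$. By Problem~\ref{prob:1}\ref{A6}, $r_i$ is the weak solution in $\HH_i$ of \eqref{e:Q_i} with data $(f,0,\ldots,0)$; since the interface Neumann data vanish and every test function in $\HH_i$ vanishes on $\Upsilon_{ii}$, the same integration-by-parts computation as in the proof of Proposition~\ref{p:0} shows that $\scal{r_i}{v}=\int_{\Omega_i}fv$ for every $v\in\HH_i$, so in particular $\int_{\Omega_i}fu_i=\scal{r_i}{u_i}$. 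Substituting this into \eqref{e:varphi_i} yields the first formula and exhibits $\varphi_i=\iota_{C_i}+q_i$, where $q_i\colon\HH_i\to\RR\colon u_i\mapsto\frac12\|u_i\|^2-\scal{r_i}{u_i}$ is a continuous convex function with $\nabla q_i=\Id-r_i$.

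Because $q_i$ is finite and continuous on all of $\HH_i$, the subdifferential sum rule (see \cite{Livre1}) gives $\partial\varphi_i=\partial\iota_{C_i}+\nabla q_i=N_{C_i}+\Id-r_i$, which is the second formula. For the proximity operator I would use the characterization $p=\prox_{\gamma\varphi_i}u\Leftrightarrow u-p\in\gamma\,\partial\varphi_i(p)=\gamma N_{C_i}(p)+\gamma(p-r_i)$; rearranging, this is equivalent to $\frac{1}{1+\gamma}u+\frac{\gamma}{1+\gamma}r_i-p\in\frac{\gamma}{1+\gamma}N_{C_i}(p)$, and since $N_{C_i}(p)$ is a cone and $\gamma>0$ the right-hand side equals $N_{C_i}(p)$, so the condition reads $p=P_{C_i}\big(\frac{1}{1+\gamma}u+\frac{\gamma}{1+\gamma}r_i\big)$. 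This establishes \ref{p:13i}.

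Part \ref{p:13ii} is the specialization $C_i=\HH_i$: then $\iota_{C_i}\equiv 0$, $N_{C_i}\equiv\{0\}$, and $P_{C_i}=\Id$, so $\varphi_i=q_i$ is Fr\'echet (hence G\^ateaux) differentiable and the three formulas of \ref{p:13i} collapse to those of \ref{p:13ii}. The only step calling for genuine care is the passage to the weak identity $\scal{Q_i(f,0,\ldots,0)}{v}=\int_{\Omega_i}fv$, that is, the identification of $Q_i(f,0,\ldots,0)$ with the Riesz representative in $\HH_i$ of the linear functional $v\mapsto\int_{\Omega_i}fv$ --- which is bounded on $\HH_i$ by Poincar\'e's inequality and the continuous embedding $\HH_i\hookrightarrow H^1(\Omega_i)$ invoked in the proof of Proposition~\ref{p:0}. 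Everything after that is routine: the sum rule for subdifferentials, the cone property of normal cones, and the variational characterization of the metric projection onto a nonempty closed convex set.
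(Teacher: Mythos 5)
Your proposal is correct and follows essentially the same route as the paper: identify $Q_i(f,0,\ldots,0)$ as the Riesz representative of $v\mapsto\int_{\Omega_i}fv$ in $(\HH_i,\scal{\cdot}{\cdot})$ (the paper does this via the Riesz--Fr\'echet theorem plus the weak formulation of \eqref{e:Q_i}, you directly via the weak formulation --- these are equivalent), then rewrite $\varphi_i$, apply the subdifferential sum rule, and characterize the proximity operator through the normal-cone inclusion and the cone property of $N_{C_i}$. No gaps.
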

\begin{proof}
Fix $i\in I$.
First note that 
\begin{equation}
\phi_i\colon\HH_i\to\RR\colon u_i\mapsto\int_{\Omega_i}fu_i 
\end{equation}
is linear. Moreover, since $\Omega_i$ bounded, 
the Cauchy-Schwarz and Poincar\'e's inequalities 
\cite[Appendix~(53c)]{Zeid90B}, and 
\eqref{e:scaledp} yield
\begin{equation}
(\exi\delta\in\RPP)(\forall u_i\in\HH_i)\quad
|\phi_i(u_i)|\leq\|f\|_{L^2(\Omega_i)}
\|u_i\|_{L^2(\Omega_i)}\leq \delta\|f\|_{L^2(\Omega_i)}\|u_i\|.
\end{equation}
Hence, the Riesz-Fr\'echet representation theorem 
asserts that there exists a unique $v_i\in\HH_i$ such that
\begin{equation}
(\forall u_i\in\HH_i)\quad\phi_i(u_i)=\int_{\Omega_i}fu_i
=\int_{\Omega_i} (D v_i)^\top D u_i=\scal{v_i}{u_i}.
\end{equation}
Thus, it follows from \cite[Proposition~25.28]{Zeid90B} and 
\eqref{e:Q_i} 
that $v_i=Q_i(f,0,\ldots,0)$. Using \eqref{e:scaledp}, we can 
therefore write \eqref{e:varphi_i} as
\begin{equation}
\label{e:phiiriesz}
\varphi_i\colon u_i\mapsto\Frac12\|u_i\|^2-
\scal{Q_i(f,0,\ldots,0)}{u_i}+\iota_{C_i}(u_i). 
\end{equation}
Moreover, we deduce from standard subdifferential calculus 
\cite[Section~16.4]{Livre1} that
\begin{equation}
\label{e:partialphi}
\partial\varphi_i=\Id-Q_i(f,0,\ldots,0)+N_{C_i},
\end{equation}
where $N_{C_i}$ is the normal cone operator to $C_i$.
Hence, it follows from \eqref{e:partialphi} that, for 
every $u_i$ and $p_i$ in $\HH_i$,
\begin{eqnarray}
p_i=\prox_{\gamma\varphi_i}u_i
&\Leftrightarrow& u_i-p_i\in\gamma\partial\varphi_i(p_i)
\nonumber\\
&\Leftrightarrow& u_i\in (1+\gamma)p_i-\gamma Q_i(f,0,\ldots,0)
+N_{C_i}p_i\nonumber\\
&\Leftrightarrow& \Frac{1}{1+\gamma}u_i+
\Frac{\gamma}{1+\gamma}Q_i(f,0,\ldots,0)
\in p_i+N_{C_i}p_i\nonumber\\
&\Leftrightarrow& p_i=P_{C_i}\bigg(\Frac{1}{1+\gamma}u_i+
\Frac{\gamma}{1+\gamma}Q_i(f,0,\ldots,0)\bigg).
\end{eqnarray}

\ref{p:13ii}: Since 
$N_{C_i}\equiv\{0\}$ and $P_{C_i}=\Id$, the result follows 
from \ref{p:13i}.
\end{proof}

\subsection{Poisson problem}
\label{sub-Poisson}

Let $f\in L^2(\Omega)$, and consider the Poisson problem with an 
homogeneous Dirichlet boundary condition
\begin{equation}
\label{Poi1}
\begin{cases}
-\Delta u=f,&\text{on}\:\:\Omega;\\
u=0,&\text{on}\:\:\bdry\Omega.
\end{cases}
\end{equation}
Classically, this problem has a unique weak solution 
$\overline{u}\in H_0^1(\Omega)$,
which can be obtained by solving the strongly convex minimization 
problem (see 
\cite[Chapter~IV.2.1]{Ekel74} or \cite[Chapter~25.9]{Zeid90B})
\begin{equation}
\label{Dir1}
\minimize{u\in H_0^1(\Omega)}{\Frac12\int_{\Omega}|Du|^2-
\int_{\Omega}fu}.
\end{equation}
As a simple example of the flexibility of our framework, we  
solve \eqref{Dir1} by decomposing the domain 
$\Omega$ into subdomains satisfying the hypotheses in 
Problem~\ref{prob:1}, and 
by imposing continuity conditions at the interfaces. 

\begin{problem}
\label{prob:Poisson}
Consider the setting of Problem~\ref{prob:1}. Let 
$f\in L^2(\Omega)$ and, for every $(i,j)\in K$, assume that 
$\Upsilon_{ij}$ and $\bdry\Omega$
are of class ${\EuScript C}^2$. The problem is to 
\begin{equation}
\label{e:erice2013-06-16poisP}
\minimize{\substack{(u_i)_{i\in I}\in\bigoplus_{i\in I}\HH_i\\
(\forall (i,j)\in K)\:
\mathsf{T}_{ij}u_i=\mathsf{T}_{ji}u_j}}{ 
\sum_{i=1}^m\Frac12\int_{\Omega_i}|Du_i|^2-\int_{\Omega_i}fu_i}.
\end{equation}
\end{problem}

We first show the equivalence between Problem~\ref{prob:Poisson} 
and \eqref{Dir1}. 

\begin{proposition}
\label{p:poisson1}
The optimization problem in \eqref{e:erice2013-06-16poisP} has 
a unique solution 
$(\overline{u}_i)_{i\in I}$. Moreover, the function defined in
\eqref{e:defugen}
is the unique solution to \eqref{Dir1}. 
\end{proposition}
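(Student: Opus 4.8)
The plan is to recognize \eqref{e:erice2013-06-16poisP} as the instance of \eqref{e:gen2} obtained by specializing the data in Proposition~\ref{p:gen}. First I would set $p=2$, so that $W^{1,p}(\Omega_i)=H^1(\Omega_i)$ and $E_i^p=\HH_i$ in view of \eqref{e:HH_iEDP} and \eqref{e:E_i}, and define, for every $i\in I$,
\begin{equation}
\phi_i\colon H^1(\Omega_i)\to\RR\colon u_i\mapsto\Frac12\int_{\Omega_i}|Du_i|^2-\int_{\Omega_i}fu_i.
\end{equation}
The verification that each $\phi_i$ lies in $\Gamma_0(H^1(\Omega_i))$ is immediate: it is a continuous quadratic-plus-linear functional, and it is convex since $u_i\mapsto\int_{\Omega_i}|Du_i|^2$ is convex. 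Strict convexity with respect to the $H^1(\Omega_i)$ norm follows because the quadratic form $u_i\mapsto\int_{\Omega_i}|Du_i|^2$ is, by Poincar\'e's inequality on $\HH_i$ (recall \ref{A2} guarantees $\Upsilon_{ii}\neq\emp$, so the Poincar\'e inequality of \cite[Section~25.10]{Zeid90B} applies), equivalent to the squared $H^1(\Omega_i)$ norm on the affine slices where the linear part is fixed; equivalently, strict convexity on $\HH_i$ is clear and it suffices to note two distinct elements of $H^1(\Omega_i)$ differing on a set of positive measure are handled exactly as in the proof of Proposition~\ref{p:gen}. Coercivity with respect to the $H^1(\Omega_i)$ norm likewise follows from Poincar\'e's inequality together with the Cauchy--Schwarz bound on the linear term $|\int_{\Omega_i}fu_i|\leq\|f\|_{L^2(\Omega_i)}\|u_i\|_{L^2(\Omega_i)}$, exactly as in the proof of Proposition~\ref{p:13}.

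Next I would observe that, with these choices, the objective $\sum_{i\in I}\phi_i(u_i)$ of \eqref{e:gen2} coincides with the objective of \eqref{e:erice2013-06-16poisP}, the constraint sets $E_i^p=\HH_i$ coincide, and the interface constraints $\mathsf{T}_{ij}u_i=\mathsf{T}_{ji}u_j$ for $(i,j)\in K$ are identical. Hence \eqref{e:erice2013-06-16poisP} is precisely \eqref{e:gen2}, and the hypotheses of Proposition~\ref{p:gen} hold with $p=2$ and the above $(\phi_i)_{i\in I}$; consequently \eqref{e:gen2} has a unique solution $(\overline{u}_i)_{i\in I}\in\HH_1\times\cdots\times\HH_m$. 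This establishes the first assertion. For the second assertion, Proposition~\ref{p:gen} also provides a unique solution $\overline{u}\in W_0^{1,2}(\Omega)=H_0^1(\Omega)$ to \eqref{e:gen1}, which with the present data reads $\minimize{u\in H_0^1(\Omega)}{\varphi(u)}$ where, by \eqref{e:2014-07-09} and \ref{A2} (the interfaces have zero Lebesgue measure and the $\Omega_i$ are disjoint),
\begin{equation}
\varphi(u)=\sum_{i\in I}\phi_i(u|_{\Omega_i})=\sum_{i\in I}\left(\Frac12\int_{\Omega_i}|Du|^2-\int_{\Omega_i}fu\right)=\Frac12\int_{\Omega}|Du|^2-\int_{\Omega}fu.
\end{equation}
Thus \eqref{e:gen1} is literally \eqref{Dir1}, so $\overline{u}$ is the unique solution to \eqref{Dir1}. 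Finally, Proposition~\ref{p:gen} asserts \eqref{e:defugen}, namely that for every $i\in I$ one has $\overline{u}(x)=\overline{u}_i(x)$ for almost every $x\in\Omega_i$, which is exactly the claimed identification of the function defined in \eqref{e:defugen} with the solution of \eqref{Dir1}.

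There is essentially no obstacle here: the proof is a matter of correctly matching the notation of Problem~\ref{prob:Poisson} with the general framework of Proposition~\ref{p:gen} for the choice $p=2$. The only point requiring a word of care is the verification that each $\phi_i$ is strictly convex and coercive \emph{with respect to the $W^{1,2}(\Omega_i)=H^1(\Omega_i)$ norm} (rather than merely with respect to the energy seminorm \eqref{scalar}); this is where Poincar\'e's inequality on $\HH_i$ enters, exactly as invoked in the proof of Proposition~\ref{p:13} for the linear term and in \cite[Section~25.10]{Zeid90B} for the equivalence of norms, and it is harmless because $\Upsilon_{ii}\neq\emp$ by assumption~\ref{A2}. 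The regularity hypothesis that $\Upsilon_{ij}$ and $\bdry\Omega$ be of class $\EuScript{C}^2$ is not needed for this proposition — it will be used elsewhere for the transmission-condition interpretation — so it may simply be carried along without being invoked.
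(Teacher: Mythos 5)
Your proof follows exactly the paper's route: the paper's own argument is a two-line invocation of Proposition~\ref{p:gen} with $p=2$ and $\phi_i\colon u\mapsto\frac12\int_{\Omega_i}|Du_i|^2-\int_{\Omega_i}fu_i$, which is precisely what you do in expanded form, including the identification of $\varphi$ with the objective of \eqref{Dir1} and of \eqref{e:gen2} with \eqref{e:erice2013-06-16poisP}. The one wrinkle --- that this $\phi_i$ is strictly convex and coercive only on $E_i^2=\HH_i$ (via Poincar\'e's inequality), not on all of $H^1(\Omega_i)$ as the hypothesis of Proposition~\ref{p:gen} literally requires --- is present in the paper as well (its proof simply asserts the $\phi_i$ are ``strongly convex''), and you flag and address this point at least as carefully as the original.
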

\begin{proof}
This is a consequence of Proposition~\ref{p:gen} with $p=2$ and, 
for every $i\in I$, 
$\phi_i\colon u\mapsto\frac12\int_{\Omega_i}|Du|^2-
\int_{\Omega_i}fu$, which are strongly convex. In this case 
$\varphi\colon u\mapsto\frac12\int_{\Omega}|Du|^2-
\int_{\Omega}fu$.
\end{proof}

Our method for solving Problem~\ref{prob:Poisson} is a particular 
case of \eqref{e:erice2013-06-16a}. Hence, the following
convergence result is an application of Theorem~\ref{t:1}.

\begin{theorem}
\label{t:poisson}
In algorithm \eqref{e:erice2013-06-16a} of Theorem~\ref{t:1}, 
replace the steps defining $p_{i,n}$ and $q_{ij,n}$ by
\begin{equation}
\label{e:proxpoi}
p_{i,n}=\Frac{1}{1+\gamma_n}v_{i,n}+
\Frac{\gamma_n}{1+\gamma_n}Q_i(f,0,\ldots,0)\quad\text{and}\quad
q_{ij,n}=0,
\end{equation}
respectively. Then, for every $i\in I$, the 
sequence $(u_{i,n})_{n\in\NN}$ generated by 
\eqref{e:erice2013-06-16a} 
converges strongly to $\overline{u}_i$ in $\HH_i$.
\end{theorem}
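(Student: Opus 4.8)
The plan is to recognize Theorem~\ref{t:poisson} as a direct instantiation of Theorem~\ref{t:1}, where the only nontrivial work is to verify that the substitutions in \eqref{e:proxpoi} are exactly the proximity operators of the constituent functions of Problem~\ref{prob:Poisson}, and that the hypotheses of Problem~\ref{prob:1} (in particular the existence condition~\ref{A8}) are met. First I would specialize Problem~\ref{prob:1} by taking, for every $i\in I$, $\varphi_i$ as in \eqref{e:varphi_i} with $C_i=\HH_i$ — this is precisely the internal energy $u_i\mapsto\frac12\int_{\Omega_i}|Du_i|^2-\int_{\Omega_i}fu_i$ appearing in \eqref{e:erice2013-06-16poisP} — and, for every $(i,j)\in K$, $\psi_{ij}=\iota_{\{0\}}$ on $L^2(\Upsilon_{ij})$, so that $\psi_{ij}(\mathsf{T}_{ij}u_i-\mathsf{T}_{ji}u_j)<\pinf$ forces the continuity constraint $\mathsf{T}_{ij}u_i=\mathsf{T}_{ji}u_j$. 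With these choices the objective in \eqref{e:erice2013-06-16p} becomes exactly \eqref{e:erice2013-06-16poisP}, and Proposition~\ref{p:poisson1} already guarantees that the resulting problem has a unique solution $(\overline{u}_i)_{i\in I}$.

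Next I would compute the two proximity operators that enter algorithm~\eqref{e:erice2013-06-16a}. For $\prox_{\gamma_n\varphi_i}$, Proposition~\ref{p:13}\ref{p:13ii} applies verbatim (since $C_i=\HH_i$) and gives $\prox_{\gamma_n\varphi_i}v_{i,n}=\frac{1}{1+\gamma_n}v_{i,n}+\frac{\gamma_n}{1+\gamma_n}Q_i(f,0,\ldots,0)$, which is the first formula in \eqref{e:proxpoi}. For $\prox_{\mu_n\psi_{ij}}$ with $\psi_{ij}=\iota_{\{0\}}$, one has $\prox_{\mu_n\iota_{\{0\}}}=P_{\{0\}}\equiv 0$, so $q_{ij,n}=\prox_{\mu_n\psi_{ij}}(l_{ij,n}+\mu_ng_{ij,n})=0$, which is the second formula in \eqref{e:proxpoi}. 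Thus replacing the generic $p_{i,n}$ and $q_{ij,n}$ steps by \eqref{e:proxpoi} turns \eqref{e:erice2013-06-16a} into precisely the instance of itself corresponding to this choice of $(\varphi_i)_{i\in I}$ and $(\psi_{ij})_{(i,j)\in K}$.

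It then remains to check that condition~\ref{A8} in Problem~\ref{prob:1} holds for this data, so that Theorem~\ref{t:1} is applicable. I would take $(\widetilde{u}_i)_{i\in I}=(\overline{u}_i)_{i\in I}$, the solution of \eqref{e:erice2013-06-16poisP}. Since the $\overline{u}_i$ satisfy the continuity constraints, $\mathsf{T}_{ij}\widetilde{u}_i-\mathsf{T}_{ji}\widetilde{u}_j=0$, and $\partial\iota_{\{0\}}(0)=L^2(\Upsilon_{ij})$, so the first inclusion in \eqref{e:exicond} holds for \emph{any} choice of $\widetilde{g}_{ij}$. The second inclusion reads $-Q_i(0,(\widetilde{g}_{ij})_{j\in J(i+)},(\widetilde{g}_{ji})_{j\in J(i-)})\in\partial\varphi_i(\widetilde{u}_i)$; by \eqref{e:partialphi}, $\partial\varphi_i(\widetilde{u}_i)=\{\widetilde{u}_i-Q_i(f,0,\ldots,0)\}$, so I need to exhibit a family $(\widetilde{g}_{ij})_{(i,j)\in K}$ with $Q_i(0,(\widetilde{g}_{ij})_{j\in J(i+)},(\widetilde{g}_{ji})_{j\in J(i-)})=Q_i(f,0,\ldots,0)-\widetilde{u}_i$ simultaneously for all $i\in I$. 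This is where the $\EuScript{C}^2$ regularity assumptions on $\Upsilon_{ij}$ and $\bdry\Omega$ enter: by elliptic regularity, the global solution $\overline{u}$ of \eqref{Dir1} is smooth enough to possess a well-defined normal trace $\nu_i^\top D\overline{u}$ in $L^2(\Upsilon_{ij})$ on each interface, and setting $\widetilde{g}_{ij}=\nu_i^\top D\overline{u}|_{\Upsilon_{ij}}$ (with the sign conventions of \eqref{e:Q_i}) makes $\overline{u}_i=\widetilde{u}_i$ the weak solution of the Dirichlet--Neumann problem with source $f$ and these Neumann data — equivalently $Q_i(f,(\widetilde{g}_{ij})_{j\in J(i+)},(\widetilde{g}_{ji})_{j\in J(i-)})=\widetilde{u}_i$, which rearranges by linearity of $Q_i$ to the required identity. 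The main obstacle is precisely this last verification: one must justify, via the $\EuScript{C}^2$ hypotheses and a regularity theorem (e.g.\ \cite[Theorem~25.I]{Zeid90B} together with interface regularity), that the fluxes of the global solution across the interfaces are square-integrable and that they match (up to sign) on both sides of each $\Upsilon_{ij}$, so that a single consistent family $(\widetilde{g}_{ij})_{(i,j)\in K}$ works for all subdomains at once. Once \ref{A8} is in place, Theorem~\ref{t:1} yields $u_{i,n}\stro{\HH_i}\overline{u}_i$ for every $i\in I$, and the $g_{ij,n}$ converge to the corresponding interface tensions, which completes the proof.
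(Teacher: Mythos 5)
Your proposal follows essentially the same route as the paper's proof: the same choice of $\varphi_i$ and $\psi_{ij}=\iota_{\{0\}}$, the same prox computations via Proposition~\ref{p:13}\ref{p:13ii} and $P_{\{0\}}\equiv 0$, and the same verification of \ref{A8} by taking $\widetilde{u}_i=\overline{u}_i$ and using the interface fluxes of the $H^2$-regular global solution as the dual variables. The only slip is a sign: to obtain $-Q_i\big(0,(\widetilde g_{ij})_{j\in J(i+)},(\widetilde g_{ji})_{j\in J(i-)}\big)=\widetilde u_i-Q_i(f,0,\ldots,0)$ one must take $\widetilde g_{ij}=\nu_j^\top D\overline u_j=-\nu_i^\top D\overline u_i$, as the paper does in \eqref{e:gij}, rather than $\nu_i^\top D\overline u_i$.
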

\begin{proof}
Set
\begin{equation}
\label{e:deffct}
\begin{cases}
(\forall i\in I)\quad \displaystyle{\varphi_i\colon x\mapsto\Frac12
\int_{\Omega_i}|Du_i|^2-
\int_{\Omega_i}fu_i}\\
(\forall (i,j)\in K)\quad \psi_{ij}=\iota_{\{0\}}.
\end{cases}
\end{equation}
Since, for every $(i,j)\in K$,
$\varphi_i\in\Gamma_0(\HH_i)$ and 
$\psi_{ij}\in\Gamma_0 (L^2(\Upsilon_{ij}))$, 
Problem~\ref{prob:Poisson} is a particular 
case of \eqref{e:erice2013-06-16p}. 
Let us verify that condition \eqref{e:exicond} holds.
Let $(\overline{u}_i)_{i\in I}\in\HH_1\oplus\cdots\oplus\HH_m$ 
be the solution to \eqref{e:erice2013-06-16poisP} guaranteed by
Proposition~\ref{p:1} and let $\overline{u}\in H^1_0(\Omega)$
be as in \eqref{e:defugen}. Since $\psi_{ij}=
\iota_{\{0\}}$, we have $\partial\psi_{ij}(0)=L^2(\Upsilon_{ij})$ 
and, hence, the first condition in \eqref{e:exicond} is 
satisfied. 
Since $\bdry\Omega$ and $(\Upsilon_{ij})_{(i,j)\in K}$ are 
of class ${\EuScript C}^2$, \cite[Theorem~2.2.2.3]{Gris85} yields 
$\overline{u}\in H^2(\Omega)$. 
Therefore, we deduce from \cite[Theorem~1.5.1.2]{Gris85} that, 
for every $i\in I$ and $j\in J(i)$, $\nu_i^\top D\overline{u}_i$ 
and $\nu_j^\top D\overline{u}_j$ belong to $L^2(\Upsilon_{ij})$. 
Now let us show that the second condition in 
\eqref{e:exicond} holds with 
\begin{equation}
\label{e:gij}
(\forall (i,j)\in K)\quad
\overline{g}_{ij}=\nu_j^\top D\overline{u}_j\in L^2(\Upsilon_{ij}).
\end{equation}
We note that the solution $(\overline{u}_i)_{i\in I}$ to 
Problem~\ref{prob:Poisson} satisfies (see, e.g., 
\cite[Theorem~6.4.1]{Atto06})
\begin{align}
\label{transcond2}
(\forall i\in I)\quad
\begin{cases}
-\Delta\overline{u}_i=f,&\text{on}\;\Omega_i;\\
\overline{u}_i=0, &\text{on}\;\Upsilon_{ii};\\
{\mathsf T}_{ij}\,\overline{u}_i={\mathsf T}_{ji}\overline{u}_j,
&\text{on}\;\Upsilon_{ij},\:\:\text{for every}\:\:j\in J(i);\\
\nu_i^\top D\overline{u}_i=-\nu_j^\top D \overline{u}_j,
&\text{on}\;\Upsilon_{ij},\:\:
\text{for every}\:\:j\in J(i)
\end{cases}
\end{align}
in the sense of distributions,
which, from \eqref{e:Q_i}, yields 
\begin{equation}
(\forall i\in I)\quad \overline{u}_i=Q_i(f,(-\nu_j^\top D 
\overline{u}_j)_{j\in J(i+)},
(\nu_j^\top D \overline{u}_j)_{j\in J(i-)}).
\end{equation}
Let us observe that, because of the regularity $\overline{u}\in 
H^2(\Omega)$, the transmission conditions 
satisfied by $\overline{u}$ 
can be expressed as equalities in the spaces $L^2(\Upsilon_{ij})$, 
which fits in our abstract framework.
Since, for every $(i,j)\in K$, 
$\nu_i^\top D\overline{u}_i=-\nu_j^\top D\overline{u}_j$,
\eqref{e:gij} implies that
\begin{equation}
\overline{u}_i=Q_i\big(f,(-\nu_j^\top D \overline{u}_j)_{j\in J(i+)},
(\nu_j^\top D \overline{u}_j)_{j\in J(i-)}\big)
=Q_i\big(f,(-\overline{g}_{ij})_{j\in J(i+)},
(-\overline{g}_{ji})_{j\in J(i-)}\big).
\end{equation}
Hence, upon invoking Proposition~\ref{p:13}\ref{p:13ii} and the 
linearity of $Q_i$, we obtain
\begin{align}
\nabla\varphi_i(\overline{u}_i)
&=\overline{u}_i-Q_i(f,0,\ldots,0)\nonumber\\
&=Q_i\big(f,(-\overline{g}_{ij})_{j\in J(i+)},
(-\overline{g}_{ji})_{j\in J(i-)}\big)
-Q_i(f,0,\ldots,0)\nonumber\\
&=Q_i\big(0,(-\overline{g}_{ij})_{j\in J(i+)},
(-\overline{g}_{ji})_{j\in J(i-)}\big)\nonumber\\
&=-Q_i\big(0,(\overline{g}_{ij})_{j\in J(i+)},
(\overline{g}_{ji})_{j\in J(i-)}\big),
\end{align}
which is the second condition in \eqref{e:exicond}. On the other 
hand, it follows from \eqref{e:prox} and \eqref{e:deffct} that, 
for every $(i,j)\in K$ and every $\mu\in\RPP$, 
$\prox_{\mu\psi_{ij}}\equiv0$. Hence, we deduce from 
Proposition~\ref{p:13}\ref{p:13ii} that \eqref{e:proxpoi}
yields
\begin{equation}
\label{e:proxpoi2}
(\forall n\in\NN)\quad 
\begin{cases}
(\forall i\in I)\quad 
p_{i,n}=\prox_{\gamma_n\varphi_i}v_{i,n}\\
(\forall (i,j)\in K)\quad 
q_{ij,n}=\prox_{\mu_n\psi_{ij}}(l_{ij,n}+\mu_ng_{ij,n}),
\end{cases}
\end{equation}
and the result follows from 
Theorem~\ref{t:1} with $(\varphi_i)_{i\in I}$ and
$(\psi_{ij})_{(i,j)\in K}$ defined as in \eqref{e:deffct}.
\end{proof}

\begin{remark}\
\begin{enumerate}
\item
Note that $(\overline{g}_{ij})_{(i,j)\in K}$ defined in 
\eqref{e:gij} is a solution to the dual problem associated with 
Problem~\ref{prob:Poisson}. The method proposed in 
Theorem~\ref{t:poisson} also converge in the dual variables, but 
for the sake of simplicity we provide only the convergence in primal 
variables.
\item In \eqref{e:erice2013-06-16a} we have
\begin{equation}
(\forall n\in\NN)(\forall i\in I)\quad v_{i,n}=u_{i,n}-\gamma_nQ_i
\big(0,(g_{ij,n})_{j\in J(i+)},(g_{ji,n})_{j\in J(i-)}\big).
\end{equation}
Hence, since the operators $(Q_i)_{i\in I}$ defined in \eqref{e:Q_i} 
are multilinear, the sequences $(p_{i,n})_{i\in I,\, n\in\NN}$ can 
be computed more efficiently via
\begin{equation}
(\forall n\in\NN)
(\forall i\in I)\quad 
p_{i,n}=\Frac{1}{1+\gamma_n}u_{i,n}+
\Frac{\gamma_n}{1+\gamma_n}Q_i\big(f,(-g_{ij,n})_{j\in 
J(i+)},(-g_{ji,n})_{j\in J(i-)}\big).
\end{equation}
This allows us to solve only $m$ auxiliary PDE's 
for updating $(p_{i,n})_{i\in I}$ at each iteration $n$.
\end{enumerate}
\end{remark}

\begin{remark}
\label{elastic} 
The analysis of Theorem~\ref{t:poisson} can be adapted 
to the case of the linear elasticity system by using
Korn's inequality instead of Poincar\'e's inequality. A key
ingredient (and possible limitation) of our approach is the $H^2$
regularity property of the solution of the problem in the case of
the linear elasticity system.
Likewise fluid-solid interactions can be handled via our framework.
\end{remark}

\subsection{\textit{p}-Laplacian} 

It has long been observed that semi-linear and quasi-linear monotone 
problems can be efficiently analyzed using modern convex-analytical
tools \cite{AD,Br1,Zeid90A}. 
We follow a similar approach in applying our variational 
decomposition method to the $p$-Laplacian operator $\Delta_p$.

Let $p\in\left]1,\pinf\right[$, let $f\in L^{\infty}(\Omega)$,
and consider the partial differential equation governed by the 
$p$-Laplacian operator with Dirichlet boundary conditions 
\begin{equation}
\label{pLap1}
\begin{cases}
-\mbox{div}\left(|Du|^{p-2}Du\right)=f,&\text{on}\:\:\Omega;\\
u=0,&\text{on}\:\:\bdry\Omega.
\end{cases}
\end{equation}
Note that, if $p=2$, \eqref{pLap1} reduces to \eqref{Poi1}. This 
problem possesses a unique weak solution $\overline{u}\in W_0^{1,p}
(\Omega)$, which can be obtained by solving the strictly convex 
minimization problem \cite[Section~IV.2.2]{Ekel74}
\begin{equation}
\label{pLap2}
\minimize{u\in W_0^{1,p}(\Omega)}{\Frac{1}{p}\int_{\Omega}|Du|^p-
\int_{\Omega}fu}.
\end{equation}
As another example of our framework, we are interested 
to solve \eqref{pLap2} by decomposing the domain 
$\Omega$ in subdomains satisfying the hypotheses in 
Problem~\ref{prob:1}, and  considering continuity conditions on 
the interfaces. More precisely, we are interested in the following
problem.

\begin{problem}
\label{prob:plap}
Consider the setting of Problem~\ref{prob:1}. Let 
$p\in\left]1,\pinf\right[$, let $\alpha\in\left]0,1\right]$, and let 
$f\in L^{\infty}(\Omega)$. 
Suppose that the unique solution to \eqref{pLap2} is in
$\EuScript{C}^{1,\alpha}(\overline{\Omega})$. 
The problem is to 
\begin{equation}
\label{Dirp}
\minimize{\substack{(u_i)_{i\in I}\in\bigtimes_{i\in I} E_i^p\\
(\forall (i,j)\in K)\:
\mathsf{T}_{ij}u_i=\mathsf{T}_{ji}u_j}}{ 
\sum_{i=1}^m\Frac1p\int_{\Omega_i}|Du_i|^p-\int_{\Omega_i}fu_i}.
\end{equation}
\end{problem}

\begin{proposition}
\label{p:2}
Problem~\ref{prob:plap} has a unique solution 
$(\overline{u}_i)_{i\in I}$. Moreover, the function 
$\overline{u}$ defined in \eqref{e:defugen}
is the unique solution to \eqref{pLap2}. 
\end{proposition}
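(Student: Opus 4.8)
The plan is to apply Proposition~\ref{p:gen} with the specific choice $\phi_i\colon u\mapsto\frac1p\int_{\Omega_i}|Du|^p-\int_{\Omega_i}fu$ for each $i\in I$, exactly as was done for the Poisson case in Proposition~\ref{p:poisson1} with $p=2$. First I would verify that each $\phi_i$ belongs to $\Gamma_0(W^{1,p}(\Omega_i))$: convexity and lower semicontinuity of $u\mapsto\frac1p\int_{\Omega_i}|Du|^p$ are classical (it is the composition of the convex increasing map $t\mapsto t^p/p$ on $\RP$ with the seminorm $\|D\cdot\|_{L^p}$, and lower semicontinuity with respect to weak $W^{1,p}$ convergence follows from convexity and norm lower semicontinuity), while $u\mapsto\int_{\Omega_i}fu$ is a continuous linear form since $f\in L^\infty(\Omega)\subset L^{p'}(\Omega_i)$ on the bounded set $\Omega_i$. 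Properness is immediate.

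Next I would establish strict convexity of $\phi_i$ with respect to the $W^{1,p}(\Omega_i)$ norm. The linear term is irrelevant, so it suffices to show $u\mapsto\int_{\Omega_i}|Du|^p$ is strictly convex modulo additive constants, i.e., that if $Du\ne Dv$ on a set of positive measure then the inequality is strict; combined with the Poincaré inequality on $E_i^p$ (the boundary condition on $\Upsilon_{ii}$ kills the constants), this upgrades to genuine strict convexity on $E_i^p$ with respect to the $W^{1,p}(\Omega_i)$ norm. Here I would use strict convexity of $\xi\mapsto|\xi|^p$ on $\RR^N$ for $p\in\left]1,\pinf\right[$. For coercivity with respect to the $W^{1,p}(\Omega_i)$ norm, again Poincaré gives $\|u\|_{W^{1,p}(\Omega_i)}\le c\|Du\|_{L^p(\Omega_i)}$ on $E_i^p$, so $\phi_i(u)\ge\frac1p\|Du\|_{L^p(\Omega_i)}^p-\|f\|_{L^{p'}(\Omega_i)}\|u\|_{L^p(\Omega_i)}\ge\frac{c'}{p}\|u\|_{W^{1,p}(\Omega_i)}^p-c''\|u\|_{W^{1,p}(\Omega_i)}\to\pinf$.

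With these facts in hand, Proposition~\ref{p:gen} applies verbatim: $\varphi\colon u\mapsto\frac1p\int_\Omega|Du|^p-\int_\Omega fu$ is strictly convex and coercive in $\Gamma_0(W^{1,p}(\Omega))$, problem~\eqref{e:gen1} (which is precisely \eqref{pLap2}) has a unique solution $\overline u\in W_0^{1,p}(\Omega)$, problem~\eqref{e:gen2} (which is precisely \eqref{Dirp}) has a unique solution $(\overline u_i)_{i\in I}\in E_1^p\times\cdots\times E_m^p$, and the two are related by $\overline u(x)=\overline u_i(x)$ a.e.\ on $\Omega_i$, which is \eqref{e:defugen}. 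The identification of \eqref{pLap2} as \eqref{e:gen1} uses the classical fact \cite[Section~IV.2.2]{Ekel74} that the weak solution of \eqref{pLap1} is the minimizer of \eqref{pLap2}; the standing hypothesis $\overline u\in\EuScript{C}^{1,\alpha}(\overline\Omega)$ in Problem~\ref{prob:plap} plays no role at this stage (it will be needed later, to verify condition~\ref{A8} when deriving the convergence theorem for the $p$-Laplacian, just as $H^2$-regularity was used in Theorem~\ref{t:poisson}). I do not expect a serious obstacle here; the only mild point requiring care is confirming that the strict-convexity argument of Proposition~\ref{p:gen}, which was phrased in terms of $L^p$ values of the functions, transfers to the present $\phi_i$ — but this is exactly the content of strict convexity of $\xi\mapsto|\xi|^p$ together with the measure-theoretic argument already given in the proof of Proposition~\ref{p:gen}.
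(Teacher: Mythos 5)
Your proposal is correct and follows exactly the paper's route: the paper's own proof is a one-line invocation of Proposition~\ref{p:gen} with the same choice $\phi_i\colon u\mapsto\frac1p\int_{\Omega_i}|Du|^p-\int_{\Omega_i}fu$, asserting strict convexity and coercivity without further detail. Your additional verifications (lower semicontinuity, the use of Poincar\'e's inequality on $E_i^p$ to obtain strict convexity and coercivity despite the constants, and the observation that the $\EuScript{C}^{1,\alpha}$ regularity hypothesis is not needed at this stage) simply flesh out what the paper leaves implicit.
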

\begin{proof}
This is a consequence of Proposition~\ref{p:gen} where, 
for every $i\in I$, 
$\phi_i\colon u\mapsto\frac1p\int_{\Omega_i}|Du|^p-
\int_{\Omega_i}fu$, which is strictly convex and coercive. 
In this case $\phi\colon u\mapsto\frac1p\int_{\Omega}|Du|^p-
\int_{\Omega}fu$.
\end{proof}

We now present our method for solving Problem~\ref{prob:plap}.

\begin{theorem}
\label{t:plap}
In algorithm \eqref{e:erice2013-06-16a} of Theorem~\ref{t:1}, 
replace the steps defining $p_{i,n}$ and $q_{ij,n}$ by 
\begin{equation}
\label{e:proxplap}
p_{i,n}=\argmin{w\in\HH_i\cap 
E_i^p}{\gamma_n\bigg(\Frac{1}{p}\int_{\Omega_i}|Dw|^p
-\int_{\Omega_i}fw\bigg)+\Frac{1}{2}\int_{\Omega_i}|Dw-Dv_{i,n}|^2}
\quad\text{and}\quad
q_{ij,n}=0,
\end{equation}
respectively. Then, for every $i\in I$, the 
sequence $(u_{i,n})_{n\in\NN}$ generated by 
\eqref{e:erice2013-06-16a} 
converges strongly to $\overline{u}_i$ in $\HH_i$.
\end{theorem}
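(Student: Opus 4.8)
The plan is to show that Theorem~\ref{t:plap} is an instance of Theorem~\ref{t:1}, exactly as was done for the Poisson problem in Theorem~\ref{t:poisson}. First I would set $\psi_{ij}=\iota_{\{0\}}$ for every $(i,j)\in K$, so that the constraint $\mathsf{T}_{ij}u_i=\mathsf{T}_{ji}u_j$ is encoded in the coupling term and $\prox_{\mu\psi_{ij}}\equiv 0$, which matches the prescription $q_{ij,n}=0$ in \eqref{e:proxplap}. For the energy terms I would set $\varphi_i\colon u_i\mapsto\frac1p\int_{\Omega_i}|Du_i|^p-\int_{\Omega_i}fu_i$ on $\HH_i$, with the understanding that $\varphi_i(u_i)=\pinf$ when $Du_i\notin(L^p(\Omega_i))^N$ (equivalently, when $u_i\notin E_i^p$ after identification); since $\Omega_i$ is bounded, $H^1\hookrightarrow$ integrability on low-dimensional sets, and by Poincar\'e's inequality this $\varphi_i$ is convex, lower semicontinuous, and proper on $\HH_i$, hence in $\Gamma_0(\HH_i)$. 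With these choices Problem~\ref{prob:plap} is precisely the primal problem \eqref{e:erice2013-06-16p}, and $\prox_{\gamma_n\varphi_i}v_{i,n}$ is exactly the argmin in \eqref{e:proxplap} by \eqref{e:prox} together with the scalar product \eqref{scalar}.

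The substantive step, as in the Poisson case, is verifying the existence condition \eqref{e:exicond} of Problem~\ref{prob:1}. The first inclusion is immediate: $\partial\psi_{ij}(0)=\partial\iota_{\{0\}}(0)=L^2(\Upsilon_{ij})$, so any choice of $\widetilde{g}_{ij}$ works. For the second inclusion I would take $\widetilde{u}_i=\overline{u}|_{\Omega_i}=\overline{u}_i$, where $\overline{u}\in W_0^{1,p}(\Omega)$ is the solution of \eqref{pLap2}, which by hypothesis lies in $\EuScript{C}^{1,\alpha}(\overline{\Omega})$. This regularity is the crucial input: it guarantees that $D\overline{u}$ is continuous up to the boundary of each $\Omega_i$, so the conormal derivatives $|D\overline{u}_i|^{p-2}\nu_i^\top D\overline{u}_i$ restrict to genuine $L^2(\Upsilon_{ij})$ functions on each interface (in fact they are continuous there), and the transmission condition $|D\overline{u}_i|^{p-2}\nu_i^\top D\overline{u}_i=-|D\overline{u}_j|^{p-2}\nu_j^\top D\overline{u}_j$ across $\Upsilon_{ij}$ holds in $L^2$. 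I would then set $\overline{g}_{ij}=|D\overline{u}_j|^{p-2}\,\nu_j^\top D\overline{u}_j\in L^2(\Upsilon_{ij})$ and, integrating by parts against an arbitrary test function in $\HH_i$ exactly as in the displayed computation preceding \eqref{e:santiago2012-01-13}, identify $-Q_i\big(0,(\overline{g}_{ij})_{j\in J(i+)},(\overline{g}_{ji})_{j\in J(i-)}\big)$ as an element of $\partial\varphi_i(\overline{u}_i)$; here one uses that the Euler--Lagrange equation for the minimizer of $\frac1p\int_{\Omega_i}|Dw|^p-\int_{\Omega_i}fw$ subject to the Dirichlet condition on $\Upsilon_{ii}$ is $-\mathrm{div}(|D\overline{u}_i|^{p-2}D\overline{u}_i)=f$ on $\Omega_i$ with the above conormal data on the remaining boundary, which is precisely the characterization of the subgradient inclusion after pairing with the scalar product \eqref{scalar}.

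Finally, once \eqref{e:exicond} is established, Proposition~\ref{p:1} gives existence and uniqueness of the solution to Problem~\ref{prob:plap} (alternatively, uniqueness follows from Proposition~\ref{p:gen} and strict convexity, and \eqref{e:defugen} identifies the solution with $\overline{u}$), and algorithm \eqref{e:erice2013-06-16a} with $\varphi_i$ and $\psi_{ij}=\iota_{\{0\}}$ as above is a bona fide instance of the scheme in Theorem~\ref{t:1}, since \eqref{e:proxplap} reproduces the proximal steps $p_{i,n}=\prox_{\gamma_n\varphi_i}v_{i,n}$ and $q_{ij,n}=\prox_{\mu_n\psi_{ij}}(l_{ij,n}+\mu_ng_{ij,n})$. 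Theorem~\ref{t:1} then yields $u_{i,n}\to\overline{u}_i$ strongly in $\HH_i$, as claimed. The main obstacle is purely the regularity bookkeeping in the second part: making rigorous that the $\EuScript{C}^{1,\alpha}$ hypothesis lets one write the nonlinear conormal derivatives as honest $L^2$ traces and that the matching condition across interfaces holds in that space; everything else is a transcription of the Poisson argument with $\frac12|Du|^2$ replaced by $\frac1p|Du|^p$ and $Q_i(f,0,\ldots,0)$ replaced by the (nonlinear) solution operator of the $p$-Laplacian on $\Omega_i$.
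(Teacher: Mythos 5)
Your overall strategy coincides with the paper's: set $\psi_{ij}=\iota_{\{0\}}$, define $\varphi_i$ as the $p$-Dirichlet energy restricted to $E_i^p$, use the $\EuScript{C}^{1,\alpha}$ regularity of $\overline{u}$ to give the nonlinear conormal derivatives a meaning in $L^2(\Upsilon_{ij})$, take $\overline{g}_{ij}=|D\overline{u}_j|^{p-2}\nu_j^\top D\overline{u}_j$, and identify $-Q_i\big(0,(\overline{g}_{ij})_{j\in J(i+)},(\overline{g}_{ji})_{j\in J(i-)}\big)$ as a subgradient of $\varphi_i$ at $\overline{u}_i$ via the Riesz representation in the scalar product \eqref{scalar}; this is exactly the paper's computation \eqref{pLap5}--\eqref{pLap7}. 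However, two points you pass over are genuine steps, and the paper splits its proof into the cases $p\geq 2$ and $1<p<2$ precisely to handle them. First, for $p\geq 2$ the lower semicontinuity of $\varphi_i$ on $\HH_i$ is not automatic from Poincar\'e: strong convergence in $\HH_i$ is convergence in the $H^1$ norm, which is \emph{weaker} than the $W^{1,p}$ norm, so you cannot pass to the limit in $\int_{\Omega_i}|Du_n|^p$ directly. The paper's argument uses the equivalence of the $W^{1,p}(\Omega_i)$ norm with $u\mapsto\|Du\|_{L^p(\Omega_i)}$ on $E_i^p$ to get coercivity, deduces that a sublevel sequence is bounded in the reflexive space $E_i^p$, extracts a weakly convergent subsequence there, and invokes weak lower semicontinuity of the convex continuous functional. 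Some such argument is needed; a bare appeal to Poincar\'e does not deliver it.

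Second, your ``unified'' definition of $\varphi_i$ silently changes the problem when $1<p<2$. In that regime $\HH_i\subset W^{1,p}(\Omega_i)$, so your $\varphi_i$ is finite everywhere on $\HH_i$ and the primal problem \eqref{e:erice2013-06-16p} is a minimization over $\bigoplus_{i\in I}\HH_i$, whereas Problem~\ref{prob:plap} is posed over the strictly larger set $\bigtimes_{i\in I}E_i^p$. One must justify that the minimizer over the larger set lies in, and coincides with the minimizer over, the Hilbert setting; the paper does this by noting that the regularity hypothesis forces $\overline{u}\in H^1_0(\Omega)$ and then using the density of $H^1_0(\Omega)$ in $W^{1,p}_0(\Omega)$ to show that \eqref{pLap2} and \eqref{pLap21} (hence \eqref{Dirp} and \eqref{Dirp2}) are equivalent. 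Without this step the convergence conclusion of Theorem~\ref{t:1} would only identify the minimizer of the restricted problem, not the solution of Problem~\ref{prob:plap}. Both gaps are repairable with the hypotheses already in place, but they are the actual content that distinguishes the $p$-Laplacian case from the Poisson transcription.
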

\begin{proof}
We consider two cases.

(a) $p\geq 2$: Since $\Omega$ is bounded, we have 
$W^{1,p}(\Omega)\subset H^1(\Omega)$, and hence it follows from 
\eqref{e:E_i} that $E_i^p\subset\HH_i$. 
Thus, Problem~\ref{prob:plap} corresponds to the 
special case of Problem~\ref{prob:1} in which
\begin{equation}
\label{e:deffctpplap}
\begin{cases}
(\forall i\in I)\quad
\displaystyle{\varphi_i\colon\HH_i\to\RX\colon u_i\mapsto
\begin{cases}
\displaystyle{\Frac{1}{p}\int_{\Omega_i}|Du_i|^p-
\int_{\Omega_i} fu_i},
&\text{if}\:\:u_i\in E_i^p;\\  
+\infty, &\text{otherwise}
\end{cases}}\\
(\forall (i,j)\in K)\quad \psi_{ij}=\iota_{\{0\}}.
\end{cases}
\end{equation}
It is clear that the functions $(\psi_{ij})_{(i,j)\in K}$
are proper, lower semicontinuous, and convex. Since the convexity
of functions $(\varphi_i)_{i\in I}$ is clear, let us show that they 
are lower semicontinuous. To this end, fix $i\in I$, take 
$\lambda\in\RR$, and let $(u_n)_{n\in\NN}$ be a sequence in $\HH_i$
such that $u_n\stro{\HH_i} u\in\HH_i$ and
$(\forall n\in\NN)$ $\varphi_i(u_n)\leq\lambda$.
We deduce from \cite[Theorem~5.4.3]{Atto06} that the norm in 
$W^{1,p}(\Omega_i)$ and the norm
\begin{equation}
\label{e:poinc2}
u\mapsto\bigg(\int_{\Omega_i}|Du|^p\bigg)^{1/p}
=\|Du\|_{L^p(\Omega_i)} 
\end{equation}
are equivalent in $E_i^p$, which yields the coercivity of 
$\varphi_i$ in $E_i^p$. Therefore, $(u_n)_{n\in\NN}$ is 
bounded in $E_i^p$ and, hence, it converges weakly to $u$ in 
$E_i^p$. Moreover, the function $\varphi_i$ is convex 
and continuous on $E_i^p$, and hence weakly lower 
semicontinuous, which yields 
\begin{equation}
\varphi_i(u)\leq\varliminf\varphi_i(u_n)\leq\lambda.
\end{equation}
Let us show that condition \eqref{e:exicond} holds. 
Let $(\overline{u}_i)_{i\in I}\in\HH_1\oplus\cdots\oplus\HH_m$ 
be the solution to Problem~\ref{prob:plap}, and let $\overline{u}\in 
H^1_0(\Omega)$ be as in \eqref{e:defugen}. Since $\psi_{ij}=
\iota_{\{0\}}$, we have $\partial\psi_{ij}(0)=L^2(\Upsilon_{ij})$, 
and the first condition in \eqref{e:exicond} is therefore 
satisfied. Now since 
$\overline{u}\in\EuScript{C}^{1,\alpha}(\overline{\Omega})$, 
for every $(i,j)\in K$, $\nu_i^\top
|D{\overline{u}}_i|^{p-2}D{\overline{u}}_i\in 
L^2(\Upsilon_{ij})$ and $\nu_j^\top
|D{\overline{u}}_j|^{p-2}D{\overline{u}}_j\in 
L^2(\Upsilon_{ij})$. Let us show that the second condition in 
\eqref{e:exicond} holds with 
\begin{equation}
\label{e:gijplap}
(\forall (i,j)\in K)\quad
\overline{g}_{ij}=|D\overline{u}_j|^{p-2}\nu_j^\top 
D\overline{u}_j\in 
L^2(\Upsilon_{ij}).
\end{equation}
The Euler equation associated with Problem~\ref{prob:plap}
yields 
\begin{align}
\label{pLap5}
(\forall i\in I)\quad
\begin{cases}
-\mbox{div}\left(|D{\overline{u}}_i|^{p-2}D{\overline{u}}_i\right)
=f,&\text{on}\;\Omega_i;\\
\overline{u}_i=0, &\text{on}\;\Upsilon_{ii};\\
{\mathsf T}_{ij}\,\overline{u}_i={\mathsf T}_{ji}\overline{u}_j,
&\text{on}\;\Upsilon_{ij},\:\:\text{for every}\:\:j\in J(i);\\
|D\overline{u}_i|^{p-2}\nu_i^\top D\overline{u}_i
=-|D\overline{u}_j|^{p-2}\nu_j^\top D \overline{u}_j,
&\text{on}\;\Upsilon_{ij},\:\:
\text{for every}\:\:j\in J(i).
\end{cases}
\end{align}
Now, for every $i\in I$, let us compute an element 
$v_i\in\partial{\varphi}_i(\overline{u}_i)$. 
By a classical directional differentiation argument (see 
\cite[Theorem 6.6.1]{Atto06} for a detailed proof) we obtain 
\begin{equation}
(\forall u\in\HH_i)\quad\int_{\Omega_i}(|D\overline{u}_i|^{p-2} 
D\overline{u}_i-Dv_i)^\top Du=\int_{\Omega_i}fu, 
\end{equation}
from which we deduce that $v_i$ satisfies, in sense of 
distributions, the boundary value problem 
\begin{equation}
\label{pLap6}
\begin{cases}
-\Delta v_i=-f-\mbox{div}\left(|D{\overline{u}}_i|^{p-2}
D{\overline{u}}_i \right)\!,&\text{on}\;\Omega_i;\\
v_i=0,&\text{on}\;\Upsilon_{ii};\\
\nu_i^\top Dv_i=\nu_i^\top |D{\overline{u}}_i|^{p-2}
D{\overline{u}}_i,&\text{on}\;\Upsilon_{ij},\:\:
\text{for every}\:\:j\in J(i),
\end{cases}
\end{equation}
which, using \eqref{pLap5} and \eqref{e:gijplap}, reduces to
\begin{equation}
\label{pLap7}
\begin{cases}
\Delta v_i=0,&\text{on}\;\Omega_i;\\
v_i=0,&\text{on}\;\Upsilon_{ii};\\
\nu_i^\top Dv_i=-\overline{g}_{ij},&\text{on}\;\Upsilon_{ij},\:\:
\text{for every}\:\:j\in J(i+);\\
\nu_i^\top Dv_i= \overline{g}_{ji},&\text{on}\;\Upsilon_{ij},\:\:
\text{for every}\:\:j\in J(i-).
\end{cases}
\end{equation}
Hence, we derive from \eqref{e:Q_i} that 
$v_i=Q_i(0,(-\overline{g}_{ij})_{j\in 
J(i+)},(-\overline{g}_{ji})_{j\in 
J(i-)})\in\partial\varphi_i(\overline{u}_i)$ which yields 
\eqref{e:exicond}.
On the other hand, it follows from \eqref{e:prox} and 
\eqref{e:deffct} that, for every $(i,j)\in K$ and every 
$\mu\in\RPP$, $\prox_{\mu\psi_{ij}}\equiv0$. Hence, we deduce from 
\eqref{e:prox} that \eqref{e:proxplap}
yields
\begin{equation}
\label{e:proxplap2}
(\forall n\in\NN)\quad 
\begin{cases}
(\forall i\in I)\quad 
p_{i,n}=\prox_{\gamma_n\varphi_i}v_{i,n}\\
(\forall (i,j)\in K)\quad 
q_{ij,n}=\prox_{\mu_n\psi_{ij}}(l_{ij,n}+\mu_ng_{ij,n}).
\end{cases}
\end{equation}
Therefore, when $(\varphi_i)_{i\in I}$ and
$(\psi_{ij})_{(i,j)\in K}$ are defined by 
\eqref{e:deffctpplap}, we deduce from 
Theorem~\ref{t:1} that $u_{i,n}\stro{\HH_i}\overline{u}_i$.

\if
In addition, since, for every $i\in I$, 
the operator $u\mapsto\int_{\Omega_i}fu$ is linear and bounded, we 
obtain
\begin{equation}
\label{e:convlin}
(\forall i\in I)\quad \int_{\Omega_i}fu_{i,n}\to
\int_{\Omega_i}f\overline{u}_i,
\end{equation}
and, since, for every $i\in I$, the norm of $W^{1,p}(\Omega_i)$ is 
equivalent 
to the norm defined in \eqref{e:poinc2} in $E_i^p$, we deduce from 
\eqref{e:deffctpplap} and \eqref{e:convlin} that
\begin{equation}
\label{e:convforte1}
\sum_{i=1}^m\|u_{i,n}\|_{W^{1,p}(\Omega_i)}^p\to 
\sum_{i=1}^m\|\overline{u}_i\|_{W^{1,p}(\Omega_i)}^p.
\end{equation}
Thus, for every $i\in I$, the sequence $(u_{i,n})_{n\in\NN}$
is bounded in $W^{1,p}(\Omega_i)$. Therefore, since 
$u_{i,n}\stro{\HH_i}\overline{u}_i$, we have that a subsequence of 
$(u_{i,n})_{n\in\NN}$
converges weakly to $\overline{u}_i$ in $W^{1,p}(\Omega_i)$, and 
hence,
by proceeding similarly, we obtain $u_{i,n}\weakly\overline{u}_i$ in
$W^{1,p}(\Omega_i)$. Finally, we deduce from \eqref{e:convforte1} 
and
\cite[Proposition~2.4.10]{Atto06} that, for every $i\in I$, 
$u_{i,n}\to\overline{u}_i$ in $W^{1,p}(\Omega_i)$.

\fi

(b) $1< p <2$: In this case, for every $i\in I$, 
$\HH_i\subset W^{1,p}(\Omega_i)$, with continuous embedding. 
Let us assume that the solution $\overline{u}$ of problem 
\eqref{pLap2} belongs to $H^1_0 (\Omega)$ (indeed we shall further 
state regularity properties of $\overline{u}$ which make this 
property satisfied). Combining this property with the density of 
$H^1_0 (\Omega)$ in $W^{1,p}_0 (\Omega)$ (for the norm topology 
of $W^{1,p}_0 (\Omega)$), the variational problem \eqref{pLap2} 
equivalently writes
\begin{equation}
\label{pLap21}
\minimize{u\in H^1_0 (\Omega)}{\Frac{1}{p}\int_{\Omega}|Du|^p-
\int_{\Omega}fu}.
\end{equation}
Using the same argument as in Proposition~\ref{p:1}, 
this is equivalent to solving
\begin{equation}
\label{Dirp2}
\minimize{\substack{(u_i)_{i\in I}\in\bigoplus_{i\in I}\HH_i\\
(\forall (i,j)\in K)\:
\mathsf{T}_{ij}u_i=\mathsf{T}_{ji}u_j}}{ 
\sum_{i=1}^m\Frac1p\int_{\Omega_i}|Du_i|^p-\int_{\Omega_i}fu_i}.
\end{equation}
Thus we are led to set
\begin{equation}
(\forall i\in I)\quad
\varphi_i\colon\HH_i\to\RR\colon u_i\mapsto\Frac{1}{p} 
\int_{\Omega_i}|Du_i|^p-\int_{\Omega_i}fu_i,          
\end{equation}
which is continuous on $\HH_i$.
The remainder of the proof is identical to the case $p\geq 2$. 
Just notice that, when $p<2$, the $p$-Laplacian becomes a singular 
elliptic operator. The global regularity of the solution 
$\overline{u}$ to problem \eqref{pLap2}, with a globally continuous 
gradient, is well established \cite{BdV,LB}. 
\end{proof}

\begin{remark}\
\label{rem:1}
\begin{enumerate}
\item 
A recent account of regularity properties for the solution 
to the $p$-Laplacian equation can be found in \cite{BdV,Lib,Tolk}. 
Note that, in contrast with the case $p=2$, the 
degeneracy of the elliptic operator $-{\Delta}_p$ for $p>2$ 
makes the regularity study more involved. In \cite{BdV}, global 
$H^2(\Omega)$ regularity is obtained for the regularized operator 
$-\varepsilon \Delta -\Delta_p$ ($\varepsilon >0$). In general,
for smooth data, the local regularity $C^{1,\alpha}_{loc}(\Omega)$ 
holds ($\alpha\in\RPP$). 
\item 
Our approach makes it possible to consider the case when $p$
assumes different values on each subdomain $\Omega_i$. In this
case,  the minimization problem becomes
\begin{equation}
\label{piLap}
\minimize{\substack{(u_i)_{i\in I}\in\bigoplus_{i\in I}\HH_i\\
(\forall (i,j)\in K)\:
\mathsf{T}_{ij}u_i=\mathsf{T}_{ji}u_j}}{ 
\sum_{i=1}^m\varphi_i (u_i)}
\end{equation}
where, for every $i\in I$, 
\begin{align}
\varphi_i\colon\HH_i\to\RX\colon u_i\mapsto 
\begin{cases}
\displaystyle{\Frac{1}{p_i}\int_{\Omega_i}|Du_i|^{p_i}-
\int_{\Omega_i}fu_i},&\text{if}\:\:u_i\in E_i^{p_i}
\cap\HH_i;\\  
\pinf,&\text{otherwise},
\end{cases}
\end{align}
and $p_i\in\left]1,\pinf\right[$.
This modification is motivated by 
bonding problems in continuum mechanics. 
\item Note that $(\overline{g}_{ij})_{(i,j)\in K}$ defined in 
\eqref{e:gijplap} is a solution to the dual problem associated with 
Problem~\ref{prob:plap}. The method proposed in 
Theorem~\ref{t:plap} also converges in the dual variables, but 
for the sake of simplicity we provide only the convergence in primal 
variables.
\end{enumerate}
\end{remark}

\begin{remark}\label{Plat}
The Plateau problem, i.e., the non parametric zero mean 
curvature problem, can be treated similar to the $p$-Laplacian 
problem (case $1<p<2$). The variational problem reads
\begin{equation}
\label{Plat1}
\minimize{\substack{u\in W^{1,1}(\Omega)\\u=\phi\:\:\text{on}\:
\bdry\Omega}} {\int_{\Omega}\sqrt{1+|Du|^2}dx},
\end{equation}
where $\phi\colon\bdry\Omega\to\RR$ is a given boundary data. 
The main issue in that situation is the existence and regularity 
of the solution of the variational problem. The regularity of the 
solution to \eqref{Plat1} has been the object of active research. 
When $\bdry\Omega$ is regular with nonnegative mean 
curvature and $\phi \in C^3(\bar{\Omega})$, there exists a unique 
solution of problem \eqref{Plat1} which is regular, and the boundary 
condition is satisfied in a classical sense (by contrast with the 
relaxed boundary condition in the general case), see 
\cite[Theorem~2.2, pp.~130]{Ekel74}. 
Then one has to modify the function 
$\varphi_i$ by introducing the non homogeneous Dirichlet boundary 
condition in its domain (i.e., $\varphi_i$ is set to $\pinf$ when 
this condition is not satisfied). The function $\varphi_i$ is still 
convex and lower semicontinuous on $\HH_i=H^1(\Omega_i)$.
\end{remark}

\subsection{Obstacle problem}
\label{obst}
We adopt the notation of the Poisson Problem~\ref{prob:Poisson}.
Let $h\colon\Omega\to\RR$ be an obstacle function  
of class $C^{1,1}$, and suppose that the constraint set
\begin{equation}
\label{obst1}
C=\menge{u\in H_0^1(\Omega)}{u\geq h\:\:\text{a.e. in }\Omega}
\end{equation}
is nonempty. 
This clearly requires that $h\leq 0$ on $\bdry\Omega$.

We consider the convex minimization problem called obstacle problem 
\begin{equation}
\label{obst2}
\minimize{u\in C}{\Frac12\int_{\Omega}|Du|^2-
\int_{\Omega}fu}.
\end{equation}
This strongly convex minimization problem admits a unique solution 
$u$ (see \cite{AP,KL} for a general presentation and analysis of
this problem). We are interested in solving it using the following 
equivalent formulation, which fits in our domain decomposition 
approach.

\begin{problem}
\label{prob:obst3}
Consider the setting of Problem~\ref{prob:1}. Let 
$f\in L^2(\Omega)$,
let $h\in C^{1,1}(\Omega)$, and, for every
$i\in I$, define $C_i=\menge{u\in\HH_i}
{u\geq h\:\:\text{a.e. in }\Omega_i}$. Suppose that, 
for every $(i,j)\in K$, $\Upsilon_{ij}$ and $\bdry\Omega$
are of class ${\EuScript C}^2$. The problem is to 
\begin{equation}
\label{obst4}
\minimize{\substack{(u_i)_{i\in I}\in\bigtimes_{i\in I} C_i\\
(\forall (i,j)\in K)\:
\mathsf{T}_{ij}u_i=\mathsf{T}_{ji}u_j}}{ 
\sum_{i=1}^m\Frac12\int_{\Omega_i}|Du_i|^2-\int_{\Omega_i}fu_i}.
\end{equation}
\end{problem}

\begin{proposition}
\label{p:1obst}
Problem~\ref{prob:obst3} has a unique solution 
$(\overline{u}_i)_{i\in I}$. Moreover, the function defined in
\eqref{e:defugen}
is the unique solution to \eqref{obst2}. 
\end{proposition}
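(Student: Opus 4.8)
The plan is to deduce Proposition~\ref{p:1obst} from Proposition~\ref{p:gen}, in the same way Proposition~\ref{p:poisson1} and Proposition~\ref{p:2} were obtained from it, now taking $p=2$ and, for every $i\in I$, $\phi_i\colon H^1(\Omega_i)\to\RX\colon u\mapsto\iota_{C_i}(u)+\frac12\int_{\Omega_i}|Du|^2-\int_{\Omega_i}fu$. First I would record that, since $p=2$ gives $W^{1,2}(\Omega_i)=H^1(\Omega_i)$ and hence $E_i^2=\HH_i$ (compare \eqref{e:E_i} and \eqref{e:HH_iEDP}), the constraint ``$u_i\in E_i^2$ and $\phi_i(u_i)<\pinf$'' amounts exactly to $u_i\in C_i$, so problem \eqref{e:gen2} of Proposition~\ref{p:gen} becomes precisely \eqref{obst4}. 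Likewise, with $\varphi$ as in \eqref{e:2014-07-09} one has $\varphi(u)=\sum_{i\in I}\iota_{C_i}(u|_{\Omega_i})+\frac12\int_{\Omega}|Du|^2-\int_{\Omega}fu$, and because assumption \ref{A2} in Problem~\ref{prob:1} makes $\Omega\smallsetminus\bigcup_{i\in I}\Omega_i$ Lebesgue-null, a function $u\in W^{1,2}_0(\Omega)$ satisfies $u\geq h$ a.e.\ on $\Omega$ if and only if $u|_{\Omega_i}\geq h$ a.e.\ on $\Omega_i$ for every $i\in I$; hence $\varphi$ coincides on $W^{1,2}_0(\Omega)$ with the objective of \eqref{obst2} plus $\iota_C$, and \eqref{e:gen1} becomes \eqref{obst2}.

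Next I would verify the hypotheses of Proposition~\ref{p:gen}. Convexity and closedness of each $C_i$ are immediate, and $C_i\neq\emp$ follows from the standing assumption that the set $C$ of \eqref{obst1} is nonempty: any $u\in C$ restricts to an element of $\HH_i$ (the trace of $u|_{\Omega_i}$ on $\Upsilon_{ii}$ is the restriction to $\Upsilon_{ii}$ of $\mathsf{T}u$, which vanishes because $\Upsilon_{ii}\subset\bdry\Omega$), and this restriction lies in $C_i$. Each $\phi_i$ is proper (as $C_i\neq\emp$), convex, and lower semicontinuous: $\iota_{C_i}$ is lower semicontinuous on the closed convex set $C_i$, the gradient term is continuous on $H^1(\Omega_i)$, and $u\mapsto\int_{\Omega_i}fu$ is a bounded linear functional there since $f\in L^2(\Omega)$, so $\phi_i\in\Gamma_0(H^1(\Omega_i))$.

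The point that I expect to require the most care is the strict convexity and coercivity of $\phi_i$ \emph{with respect to the $W^{1,2}(\Omega_i)$ norm}, because the gradient term $u\mapsto\frac12\int_{\Omega_i}|Du|^2$ is, by itself, neither strictly convex nor coercive on all of $H^1(\Omega_i)$ (constants lie in its kernel). The resolution is that $\dom\phi_i=C_i\subset\HH_i$: on $\HH_i$ the gradient term coincides with $\frac12\|\cdot\|^2$ for the scalar product \eqref{scalar}, which is strongly convex, while Poincar\'e's inequality makes the $\HH_i$-norm equivalent to the $W^{1,2}(\Omega_i)$-norm on $\HH_i$ and, together with Cauchy--Schwarz, bounds $|\int_{\Omega_i}fu|$ by a constant times $\|u\|$; hence on $\HH_i$ the function $\phi_i$ is strongly---a fortiori strictly---convex and coercive relative to the $W^{1,2}(\Omega_i)$ norm, and since it equals $\pinf$ off $\HH_i$ both properties hold globally.

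With these verifications in hand, Proposition~\ref{p:gen} applied with $p=2$ yields at once the existence and uniqueness of the solution $(\overline{u}_i)_{i\in I}$ to \eqref{obst4}, the existence and uniqueness of the solution $\overline{u}$ to \eqref{obst2}, and the identity \eqref{e:defugen}, which is the full statement. I do not anticipate any genuine obstacle beyond this bookkeeping: the argument is a transcription of the ones already carried out for Problems~\ref{prob:Poisson} and \ref{prob:plap}, the only new ingredient being the compatible decomposition of the obstacle constraint---and of the homogeneous Dirichlet condition---across the subdomains.
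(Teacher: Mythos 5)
Your proposal is correct and takes essentially the same route as the paper, whose proof is a one-line application of Proposition~\ref{p:gen} with $p=2$ and $\phi_i\colon u\mapsto\iota_{C_i}(u)+\frac12\int_{\Omega_i}|Du|^2-\int_{\Omega_i}fu$. The extra verifications you supply---nonemptiness of each $C_i$ via restriction of an element of $C$, and strict convexity/coercivity in the $W^{1,2}(\Omega_i)$ norm obtained by confining attention to $\dom\phi_i\subset\HH_i$ where Poincar\'e's inequality applies---are details the paper leaves implicit, and you handle them correctly.
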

\begin{proof}
This is a consequence of Proposition~\ref{p:gen} with, 
for every $i\in I$, 
$\phi_i\colon u\mapsto\iota_{C_i}(u)+\frac12\int_{\Omega_i}|Du|^2-
\int_{\Omega_i}fu$, which are strongly convex. 
In this case, 
$\phi\colon u\mapsto\iota_C(u)+\frac12\int_{\Omega}|Du|^2-
\int_{\Omega}fu$.
\end{proof}

\begin{theorem}
\label{t:obst}
In algorithm \eqref{e:erice2013-06-16a} of Theorem~\ref{t:1}, 
replace the steps defining $p_{i,n}$ and $q_{ij,n}$ by
\begin{equation}
\label{e:proxobst}
p_{i,n}=P_{C_i}\bigg(\Frac{1}{1+\gamma_n}v_{i,n}+
\Frac{\gamma_n}{1+\gamma_n}Q_i(f,0,\ldots,0)\bigg)
\quad\text{and}\quad
q_{ij,n}=0,
\end{equation}
respectively. Then, for every $i\in I$, the 
sequence $(u_{i,n})_{n\in\NN}$ generated by 
\eqref{e:erice2013-06-16a} 
converges strongly to $\overline{u}_i$ in $\HH_i$.
\end{theorem}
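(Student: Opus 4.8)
The strategy parallels the proof of Theorem~\ref{t:poisson}. I would realize Problem~\ref{prob:obst3} as the instance of Problem~\ref{prob:1} in which, for every $i\in I$ and every $(i,j)\in K$,
\[
\varphi_i\colon u_i\mapsto\iota_{C_i}(u_i)+\Frac12\int_{\Omega_i}|Du_i|^2-\int_{\Omega_i}fu_i
\qquad\text{and}\qquad\psi_{ij}=\iota_{\{0\}},
\]
so that $\varphi_i\in\Gamma_0(\HH_i)$, $\psi_{ij}\in\Gamma_0(L^2(\Upsilon_{ij}))$, and, by Proposition~\ref{p:1obst}, the problem \eqref{obst4} is equivalent to the obstacle problem \eqref{obst2} through \eqref{e:defugen}. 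By Proposition~\ref{p:13}\ref{p:13i} the operator in the first equation of \eqref{e:proxobst} is exactly $\prox_{\gamma_n\varphi_i}$, while \eqref{e:prox} gives $\prox_{\mu_n\psi_{ij}}\equiv0$; hence \eqref{e:proxobst} merely rewrites the generic steps $p_{i,n}=\prox_{\gamma_n\varphi_i}v_{i,n}$ and $q_{ij,n}=\prox_{\mu_n\psi_{ij}}(l_{ij,n}+\mu_ng_{ij,n})$ of \eqref{e:erice2013-06-16a}. Thus, once the qualification condition \eqref{e:exicond} is verified, the conclusion $u_{i,n}\stro{\HH_i}\overline{u}_i$ will follow from Theorem~\ref{t:1}.

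To check \eqref{e:exicond}, take $\widetilde{u}_i=\overline{u}_i$ (the solution to \eqref{obst4}) and let $\overline{u}\in H^1_0(\Omega)$ be the corresponding solution to \eqref{obst2} as in \eqref{e:defugen}. Since $\psi_{ij}=\iota_{\{0\}}$ and the interface constraint forces $\mathsf{T}_{ij}\overline{u}_i-\mathsf{T}_{ji}\overline{u}_j=0$, we have $\partial\psi_{ij}(\mathsf{T}_{ij}\overline{u}_i-\mathsf{T}_{ji}\overline{u}_j)=L^2(\Upsilon_{ij})$, so the first inclusion in \eqref{e:exicond} holds for any choice of $\widetilde{g}_{ij}$. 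To pin down the right one I would invoke the $H^2(\Omega)$-regularity of the obstacle-problem solution, which holds because $h\in C^{1,1}$ and $\bdry\Omega$ and the interfaces are of class ${\EuScript C}^2$ (see, e.g., \cite{KL}); by the trace theorem \cite[Theorem~1.5.1.2]{Gris85} this gives $\nu_i^\top D\overline{u}_i\in L^2(\Upsilon_{ij})$ and turns the transmission conditions into identities in $L^2(\Upsilon_{ij})$. Then set $\overline{g}_{ij}=\nu_j^\top D\overline{u}_j\in L^2(\Upsilon_{ij})$, exactly as in the Poisson case.

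It remains to establish the second inclusion, $-Q_i(0,(\overline{g}_{ij})_{j\in J(i+)},(\overline{g}_{ji})_{j\in J(i-)})\in\partial\varphi_i(\overline{u}_i)$. Put $\mu=-\Delta\overline{u}-f$; the complementarity conditions for \eqref{obst2} read $\mu\geq0$, $\overline{u}\geq h$, and $\mu(\overline{u}-h)=0$ a.e.\ in $\Omega$, and $\mu\in L^2(\Omega)$ since $\overline{u}\in H^2(\Omega)$. Restricting these relations to $\Omega_i$, using the transmission conditions and the homogeneous Dirichlet condition on $\Upsilon_{ii}$, and invoking the linearity of $Q_i$, I would obtain the decomposition $\overline{u}_i=Q_i(f,(-\overline{g}_{ij})_{j\in J(i+)},(-\overline{g}_{ji})_{j\in J(i-)})+Q_i(\mu|_{\Omega_i},0,\ldots,0)$. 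Since Proposition~\ref{p:13}\ref{p:13i} gives $\partial\varphi_i(\overline{u}_i)=N_{C_i}(\overline{u}_i)+\overline{u}_i-Q_i(f,0,\ldots,0)$, a short computation using the linearity of $Q_i$ reduces the sought inclusion to $-Q_i(\mu|_{\Omega_i},0,\ldots,0)\in N_{C_i}(\overline{u}_i)$. Finally, for every $v\in C_i$ the weak formulation of $Q_i$ yields $\scal{Q_i(\mu|_{\Omega_i},0,\ldots,0)}{v-\overline{u}_i}=\int_{\Omega_i}\mu(v-\overline{u}_i)=\int_{\Omega_i}\mu(v-h)\geq0$, where the second equality uses $\mu(\overline{u}_i-h)=0$ a.e.\ and the inequality uses $\mu\geq0$ and $v\geq h$ a.e.; this is precisely the normal-cone membership, and Theorem~\ref{t:1} then applies.

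I expect the main obstacle to be the $H^2(\Omega)$-regularity of the obstacle-problem solution: in contrast with the Poisson case, this is a genuinely nontrivial input, and it is exactly what forces the $C^{1,1}$ hypothesis on $h$ and the ${\EuScript C}^2$ hypothesis on $\bdry\Omega$ and the interfaces; without it the fluxes $\nu_i^\top D\overline{u}_i$ need not belong to $L^2(\Upsilon_{ij})$, and the transmission conditions could not be expressed within our Hilbert framework. The second delicate point is the identification of the reaction term $-Q_i(\mu|_{\Omega_i},0,\ldots,0)$ as an element of $N_{C_i}(\overline{u}_i)$: this is where the variational-inequality (complementarity) structure enters, and one must be careful that the global complementarity relations restrict correctly to each subdomain and that the interface flux balance is captured as an $L^2$ identity.
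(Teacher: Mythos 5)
Your proposal is correct, and it reaches the key inclusion $-Q_i\big(0,(\overline{g}_{ij})_{j\in J(i+)},(\overline{g}_{ji})_{j\in J(i-)}\big)\in\partial\varphi_i(\overline{u}_i)$ by a genuinely different route than the paper. The paper never introduces the reaction term: it starts from the global variational inequality for \eqref{obst2}, rewrites it subdomain by subdomain, substitutes $\int_{\Omega_i}f(v_i-\overline{u}_i)$ via the weak formulation of $w_i=Q_i\big(f,(-\overline{g}_{ij})_{j\in J(i+)},(-\overline{g}_{ji})_{j\in J(i-)}\big)$, cancels the interface integrals pairwise using $\mathsf{T}_{ij}v_i=\mathsf{T}_{ji}v_j$ and $\mathsf{T}_{ij}\overline{u}_i=\mathsf{T}_{ji}\overline{u}_j$, and arrives at $\sum_{i\in I}\scal{\overline{u}_i-w_i}{v_i-\overline{u}_i}\geq 0$, which it then localizes to each $i$ by choosing suitable test functions to get $w_i-\overline{u}_i\in N_{C_i}(\overline{u}_i)$. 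You instead set $\mu=-\Delta\overline{u}-f$, invoke the complementarity relations $\mu\geq 0$, $\overline{u}\geq h$, $\mu(\overline{u}-h)=0$ a.e., decompose $\overline{u}_i=Q_i\big(f,(-\overline{g}_{ij})_{j\in J(i+)},(-\overline{g}_{ji})_{j\in J(i-)}\big)+Q_i(\mu|_{\Omega_i},0,\ldots,0)$, and reduce everything to the one-line verification $\scal{Q_i(\mu|_{\Omega_i},0,\ldots,0)}{v_i-\overline{u}_i}=\int_{\Omega_i}\mu(v_i-h)\geq 0$. The trade-off: your argument localizes effortlessly and makes the Lagrange-multiplier structure transparent, but it leans on the full $H^2(\Omega)$ regularity so that $\mu\in L^2(\Omega)$ and the complementarity holds pointwise a.e.; the paper's argument needs only the interface fluxes $\nu_j^\top D\overline{u}_j$ in $L^2(\Upsilon_{ij})$ and stays entirely at the level of the variational inequality, at the cost of the interface bookkeeping and the "appropriate test functions" step. (Incidentally, the paper quotes $C^{1,1}$ regularity from Kinderlehrer--Stampacchia, whereas your $H^2(\Omega)$ statement is both sufficient and the more natural conclusion under the stated hypothesis $f\in L^2(\Omega)$.) The remaining parts of your proof — identification of $\prox_{\gamma_n\varphi_i}$ via Proposition~\ref{p:13}\ref{p:13i}, $\prox_{\mu_n\psi_{ij}}\equiv 0$, and the appeal to Theorem~\ref{t:1} — coincide with the paper's.
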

\begin{proof}
Set
\begin{equation}
\label{e:deffctobst}
\begin{cases}
(\forall i\in I)\quad
\displaystyle{\varphi_i\colon\HH_i\to\RX\colon  u_i 
\mapsto\iota_{C_i}(u_i)+\Frac12\int_{\Omega_i}|Du_i|^2-
\int_{\Omega_i}fu_i}\\
(\forall (i,j)\in K)\quad \psi_{ij}=\iota_{\{0\}}.
\end{cases}
\end{equation}
Since the sets $(C_i)_{i\in I}$ are 
closed and convex in $\HH_i$, the convex functions
$(\varphi_i)_{i\in I}$ 
are lower semicontinuous, and hence, for every $i\in I$, 
$\varphi_i\in\Gamma_0(\HH_i)$.
Moreover, for every $(i,j)\in K$, 
$\psi_{ij}\in\Gamma_0 (L^2(\Upsilon_{ij}))$. Altogether, 
Problem~\ref{prob:obst3} is a particular case of 
Problem~\ref{prob:1}. Let us verify that condition 
\eqref{e:exicond} holds.
Let $(\overline{u}_i)_{i\in I}\in C_1\times\cdots\times C_m$ 
be the solution to Problem~\ref{prob:obst3}, and let 
$\overline{u}\in C$ 
defined by \eqref{e:defugen} be the unique solution to
\eqref{obst2}
guaranteed by Proposition~\ref{p:1obst}. Since $\psi_{ij}=
\iota_{\{0\}}$, we have $\partial\psi_{ij}(0)=L^2(\Upsilon_{ij})$, 
and hence the first condition in \eqref{e:exicond} is satisfied. 
Since $\bdry\Omega$ and $(\Upsilon_{ij})_{(i,j)\in K}$ are
of class ${\EuScript C}^2$ and $h\in C^{1,1}$, we have
$\overline{u}\in C^{1,1}$ and, for every $i\in I$ and $j\in J(i)$, 
$\nu_i^\top D\overline{u}_i\in L^2(\Upsilon_{ij})$ 
and $\nu_j^\top D\overline{u}_j\in L^2(\Upsilon_{ij})$
\cite[Theorem~8.2]{KL} (see also \cite{Fr}).
Now let us show that the second condition in 
\eqref{e:exicond} holds with 
\begin{equation}
\label{e:gijobst}
(\forall (i,j)\in K)\quad
\overline{g}_{ij}=\nu_j^\top D\overline{u}_j\in L^2(\Upsilon_{ij}).
\end{equation}
The optimality condition for the  solution $\overline{u}$
to \eqref{obst2} and 
Proposition~\ref{p:1obst} yield $\overline{u}\in C$ and
\begin{equation}
(\forall v\in C)\quad 
\int_{\Omega}D\overline{u}^{\top}D(v-\overline{u})
-\int_{\Omega}f(v-\overline{u})\geq0
\end{equation}
or, equivalently,
\begin{multline}
\label{e:optobst}
(\forall i\in I)(\forall v_i\in C_i)\:\:\text{such that}\:\:
(\forall (i,j)\in K)
\:\mathsf{T}_{ij}v_i=\mathsf{T}_{ji}v_j\\
\sum_{i\in I}\bigg(\int_{\Omega_i}D\overline{u}_i^\top 
D(v_i-\overline{u}_i)-\int_{\Omega_i}f(v_i-\overline{u}_i)\bigg)\geq
0.
\end{multline}
Using \eqref{e:Q_i} and integration by parts, \eqref{e:optobst} 
is written as 
\begin{multline}
\label{e:optobst2}
(\forall i\in I)(\forall v_i\in C_i)\text{  such that  }
(\forall (i,j)\in K)
\:\mathsf{T}_{ij}v_i=\mathsf{T}_{ji}v_j\\
\sum_{i\in I}\bigg(\int_{\Omega_i}D\overline{u}_i^\top 
D(v_i-\overline{u}_i)+\int_{\bdry\Omega_i}\nu_i^{\top}
Dw_i\cdot\mathsf{T}_i(v_i-\overline{u}_i)dS-\int_{\Omega_i}
Dw_i^{\top}D(v_i-\overline{u}_i)\bigg)\geq 0,
\end{multline}
where $w_i=Q_i(f,(-\overline{g}_{ij})_{j\in
J(i+)},(-\overline{g}_{ji})_{j\in J(i-)})$ or, equivalently,
\begin{multline}
\label{e:optobst3}
(\forall i\in I)(\forall v_i\in C_i)\text{  such that  }
(\forall (i,j)\in K)
\:\mathsf{T}_{ij}v_i=\mathsf{T}_{ji}v_j\\
\sum_{i\in I}\bigg(\int_{\Omega_i}D(\overline{u}_i-w_i)^\top 
D(v_i-\overline{u}_i)-\sum_{j\in
J(i+)}\int_{\Upsilon_{ij}}\overline{g}_{ij}
\cdot\mathsf{T}_{ij}(v_i-\overline{u}_i)dS\\+\sum_{j\in
J(i-)}\int_{\Upsilon_{ij}}\overline{g}_{ji}
\cdot\mathsf{T}_{ij}(v_i-\overline{u}_i)dS\bigg)\geq 0.
\end{multline}
Since, for every $(i,j)\in K$,
$\mathsf{T}_{ij}v_i=\mathsf{T}_{ji}v_j$ and 
$\mathsf{T}_{ij}\overline{u}_i=\mathsf{T}_{ji}\overline{u}_j$ we have
\begin{align}
\sum_{i\in I}\sum_{j\in J(i-)}\int_{\Upsilon_{ij}}\overline{g}_{ji}
\cdot\mathsf{T}_{ij}(v_i-\overline{u}_i)dS
&=\sum_{j\in I}\sum_{i\in J(j+)}\int_{\Upsilon_{ij}}\overline{g}_{ji}
\cdot\mathsf{T}_{ij}(v_i-\overline{u}_i)dS\nonumber\\
&=\sum_{j\in I}\sum_{i\in J(j+)}\int_{\Upsilon_{ji}}\overline{g}_{ji}
\cdot\mathsf{T}_{ji}(v_j-\overline{u}_j)dS,
\end{align}
and, hence, \eqref{e:optobst3} reduces to 
\begin{multline}
\label{e:optobst4}
(\forall i\in I)(\forall v_i\in C_i)\text{  such that  }(\forall 
(i,j)\in K)\:\mathsf{T}_{ij}v_i=\mathsf{T}_{ji}v_j\\
\sum_{i=1}^m\int_{\Omega_i}D(\overline{u}_i-w_i)^\top 
D(v_i-\overline{u}_i)\geq 0.
\end{multline}
Now fix $i\in I$. Taking appropriate functions $(v_j)_{j\neq i}$, we 
deduce from \eqref{e:optobst4} that
\begin{align}
(\forall v_i\in C_i)\quad
\scal{\overline{u}_i-w_i}{v_i-\overline{u}_i}=\int_{\Omega_i}
D(\overline{u}_i-w_i)^\top 
D(v_i-\overline{u}_i)\geq 0,
\end{align}
which is equivalent to $w_i-\overline{u}_i
=Q_i(f,(-\overline{g}_{ij})_{j\in J(i+)},(-\overline{g}_{ji})_{j\in
J(i-)})
-\overline{u}_i\in N_{C_i}(\overline{u}_i)$.
Hence, using the linearity of the operator $Q_i$, we obtain
\begin{equation}
Q_i(f,0,\ldots,0)-
Q_i(0,(\overline{g}_{ij})_{j\in J(i+)},(\overline{g}_{ji})_{j\in
J(i-)})
\in \overline{u}_i+N_{C_i}(\overline{u}_i).
\end{equation}
We deduce from Proposition~\ref{p:13}\ref{p:13i} that
\begin{equation}
-Q_i(0,(\overline{g}_{ij})_{j\in J(i+)},(\overline{g}_{ji})_{j\in
J(i-)})\in
\partial\varphi_i(\overline{u}_i). 
\end{equation}
Hence, \eqref{e:exicond} holds.
On the other hand, it follows from \eqref{e:prox} and 
\eqref{e:deffctobst} that, for every $(i,j)\in K$ and $\mu\in\RPP$, 
$\prox_{\mu\psi_{ij}}\equiv0$. Hence, we deduce from 
Proposition~\ref{p:13}\ref{p:13i} that \eqref{e:proxobst}
yields
\begin{equation}
\label{e:proxobst2}
(\forall n\in\NN)\quad 
\begin{cases}
(\forall i\in I)\quad 
p_{i,n}=\prox_{\gamma_n\varphi_i}v_{i,n}\\
(\forall (i,j)\in K)\quad 
q_{ij,n}=\prox_{\mu_n\psi_{ij}}(l_{ij,n}+\mu_ng_{ij,n}).
\end{cases}
\end{equation}
Therefore, the result follows from 
Theorem~\ref{t:1}, where $(\varphi_i)_{i\in I}$ and
$(\psi_{ij})_{(i,j)\in K}$ are defined by 
\eqref{e:deffctobst}.
\end{proof}

\newpage
\begin{remark}\
\begin{enumerate}
\item
Note that $(\overline{g}_{ij})_{(i,j)\in K}$ defined in 
\eqref{e:gijobst} is a solution to the dual problem associated with 
Problem~\ref{prob:obst3}. The method proposed in 
Theorem~\ref{t:obst} also guarantees the convergence of the 
dual variables, but for the sake of simplicity we provide only 
the primal convergence statement.
\item In \eqref{e:erice2013-06-16a} we have
\begin{equation}
(\forall n\in\NN)(\forall i\in I)\quad v_{i,n}=u_{i,n}-\gamma_nQ_i
\big(0,(g_{ij,n})_{j\in J(i+)},(g_{ji,n})_{j\in J(i-)}\big).
\end{equation}
Hence, since the operators $(Q_i)_{i\in I}$ defined in \eqref{e:Q_i} 
are multilinear, the sequences $(p_{i,n})_{i\in I,\, n\in\NN}$ 
can be computed more efficiently via
\begin{equation}
(\forall n\in\NN)
(\forall i\in I)\quad 
p_{i,n}=P_{C_i}\Big(\Frac{1}{1+\gamma_n}u_{i,n}+
\Frac{\gamma_n}{1+\gamma_n}Q_i(f,(-g_{ij,n})_{j\in 
J(i+)},(-g_{ji,n})_{j\in J(i-)})\Big).
\end{equation}
This allows us to solve only $m$ auxiliary PDE's 
for updating $(p_{i,n})_{i\in I}$ at each iteration $n$.
\end{enumerate}
\end{remark}

\section{Perspectives}
\label{sec:5}
In this section we briefly outline possible adaptations and 
variants of our framework to related problems. 

First, in the setting of the Poisson 
Problem~\ref{prob:Poisson} let, for 
every $i\in I$ and $j\in J(i+)$, $\varepsilon_{ij}\in\{-1,1\}$,
and consider the variational problem
\begin{equation}
\label{uni1}
\minimize{\substack{u_1\in\HH_1,\ldots,u_m\in\HH_m  \\
\vspace{3mm}
(\forall i\in I)(\forall j\in J(i+))\:\,
\varepsilon_{ij}\left(\mathsf{T}_{ij}u_i-\mathsf{T}_{ji}u_j\right)
\geq 0}}
{\sum_{i=1}^m\Frac12\int_{\Omega_i}|Du_i|^2-\int_{\Omega_i}fu_i}.
\end{equation}
By contrast with the preceding problems in which the bilateral 
constraint $\mathsf{T}_{ij}u_i-\mathsf{T}_{ji}u_j=0$ imposes a 
continuity property at the interfaces, the constraint
$\varepsilon_{ij}\left(\mathsf{T}_{ij}u_i-\mathsf{T}_{ji}u_j\right)
\geq 0$ models a unilateral transmission condition through the 
interfaces. This occurs for example in the modelling of fissures
and cracks. Depending on the sign of 
$\varepsilon_{ij}$, we have a nonzero flux from $\Omega_i$ towards 
$\Omega_j$, or in the reverse direction. The main difference with 
respect to the previous examples is that, instead of using 
$\psi_{ij}=\iota_{\{0\}}$, in this case we set
$\psi_{ij}=\iota_{\{L^2(\Upsilon_{ij})^+\}}$ 
or $\psi_{ij}=\iota_{\{L^2(\Upsilon_{ij})^-\}}$,
depending on the sign of $\varepsilon_{ij}$. Clearly
$\psi_{ij}\in\Gamma_0(L^2(\Upsilon_{ij}))$ because 
$L^2(\Upsilon_{ij})^+$ and $L^2(\Upsilon_{ij})^-$ are closed convex 
cones in $L^2(\Upsilon_{ij})$.

Modeling semi-permeable membranes gives rise to similar problems, 
which possibly involve both unilateral transmission conditions and 
surface energy functions. For example (here $\mu_{ij}>0$ stands
for some permeability coefficients)
\begin{equation}
\label{uni2}
\minimize{\substack{i\in I,\:u_i\in\HH_i \\
\varepsilon_{ij}\left(\mathsf{T}_{ij}u_i-\mathsf{T}_{ji}u_j\right)
\geq 0,\\j\in J(i+) }}
{\sum_{i=1}^m\Frac12\int_{\Omega_i}|Du_i|^2-\int_{\Omega_i}fu_i} 
+{\sum_{i,j}\Frac{\mu_{ij}}{2}\int_{\Upsilon_{ij}}|\mathsf{T}_{ij}
u_i-\mathsf{T}_{ji}u_j|^2}.
\end{equation}
This problem is within the scope of our study. Depending on the sign 
of $\varepsilon_{ij}$ one can take 
\begin{equation}
\psi_{ij}(g)=\iota_{\{L^2(\Upsilon_{ij})^+\}}(g)+
\Frac{\mu_{ij}}{2}\int_{\Upsilon_{ij}}|g|^2
\end{equation}
or
\begin{equation}
\psi_{ij}(g)=\iota_{\{L^2(\Upsilon_{ij})^-\}}(g)+ 
\Frac{\mu_{ij}}{2}\int_{\Upsilon_{ij}}|g|^2.
\end{equation}

Finally, let us note that in this paper we have considered 
only Dirichlet boundary conditions. Neumann and mixed boundary 
conditions can also be considered by working in Sobolev spaces 
$(\HH_i)_{i\in I}$ associated with the corresponding variational 
formulation (for example, for the Neumann problem, one can take 
$\HH_i=H^1(\Omega_i)$).

\end{document}